\renewcommand{\leq}{\leqslant}
\renewcommand{\geq}{\geqslant}
\numberwithin{equation}{section}
\def\stacksum#1#2{{\stackrel{{\scriptstyle #1}}
{{\scriptstyle #2}}}}
\newcommand{\Cc}{\mathbf{C}}
\newcommand{\Gg}{\mathbf{G}}
\newcommand{\Zz}{\mathbf{Z}}
\newcommand{\Rr}{\mathbf{R}}
\newcommand{\Hc}{\mathcal{H}}
\newcommand{\Qq}{\mathbf{Q}}
\newcommand{\Fp}{\mathbf{F}}
\newcommand{\proba}{\text{\boldmath$P$}}
\newcommand{\expect}{\mathbf{E}}
\newcommand{\variance}{\mathbf{V}}
\newcommand{\charfun}{\mathds{1}}
\newcommand{\sy}{\mathfrak{S}}
\newcommand{\ra}{\rightarrow}
\newcommand{\lra}{\longrightarrow}
\newcommand{\fleche}[1]{\stackrel{#1}{\lra}}
\DeclareMathOperator{\frob}{\mathrm{Fr}}
\DeclareMathOperator{\Gal}{Gal}
\DeclareMathOperator{\Tr}{Tr}
\DeclareMathOperator{\disc}{disc}
\newcommand{\eps}{\varepsilon}
\newcommand{\cheb}[1]{c({{#1}})}
\newcommand{\chebp}[1]{c'({{#1}})}
\newcommand{\scheb}[1]{c_2({{#1}})}
\DeclareMathSymbol{\gena}{\mathord}{letters}{"3C}
\DeclareMathSymbol{\genb}{\mathord}{letters}{"3E}
\def\sumb{\mathop{\sum \Bigl.^{\flat}}\limits}
\def\dblsum{\mathop{\sum \sum}\limits}
\theoremstyle{plain}
\newtheorem{theorem}{Theorem}[section]
\newtheorem{lemma}[theorem]{Lemma}
\newtheorem{corollary}[theorem]{Corollary}
\newtheorem{proposition}[theorem]{Proposition}
\theoremstyle{remark}
\newtheorem{remark}[theorem]{Remark}
\theoremstyle{definition}
\newtheorem{definition}[theorem]{Definition}
\newtheorem{question}{Question}
\begin{document}

\title{The Chebotarev invariant of a finite group}
 
\author{Emmanuel  Kowalski}
\address{ETH Zürich -- D-MATH\\
  Rämistrasse 101\\
  8092 Zürich\\
  Switzerland} 
\email{kowalski@math.ethz.ch}

\author{David Zywina}
\address{Department of Mathematics, University of Pennsylvania\\
  Philadelphia, PA 19104-6395\\
  USA} 
\email{zywina@math.upenn.edu}

\subjclass[2000]{20P05, 20Dxx, 20K01, 20F69, 60Bxx} 

\keywords{Chebotarev density theorem, coupon collector problems,
  Galois group, conjugacy classes generating a group, probabilistic
  group theory}

\begin{abstract}
  We consider invariants of a finite group $G$ related to the number
  of random (independent, uniformly distributed) conjugacy classes
  which are required to generate it. These invariants are intuitively
  related to problems of Galois theory. We find group-theoretic
  expressions for them and investigate their values both theoretically
  and numerically. 
\end{abstract}

\maketitle

\section{Introduction}

% A well-known method to compute the Galois group $G$ of a number field
% (e.g., of the splitting field of a polynomial $P\in\Zz[T]$ with
% integral coefficients) can be described roughly as follows: (1) guess
% what $G$ is or should be (often, this amounts to finding an ``upper
% bound'' for $G$, e.g., because of symmetry considerations, and then
% ``guessing'' that the group is maximal under the condition that this
% symmetry is preserved);
%  (2) reduce modulo successive primes, and use
% the fact that the Frobenius automorphism give elements in
% \emph{conjugacy classes} in the Galois group: this leads to ``lower
% bounds'' on $G$.  If the guess in (1) was right, and if one is patient
% enough in (2) that the conjugacy classes observed are only compatible
% with the Galois group being, indeed, the preferred candidate, then we
% have succeeded.

% {\color{red} I think that this method is a little too vague (and as
% written it can be misleading).  How about something like:

A well-known method to compute the Galois group $H$ of a number field
(e.g., of the splitting field of a polynomial $P\in\Zz[T]$ with
integral coefficients) can be described roughly as follows: (1) find a
group $G$ which contains $H$, e.g., because of symmetry considerations
(2) this group $G$ is our ``guess'' for $H$, and we try to prove that
$H=G$ by reducing modulo successive primes, using the fact that the
Frobenius automorphisms give \emph{conjugacy classes} in the Galois
group $H$, and hence conjugacy classes in $G$.
\par
If the guess in (1) was right, and if one is patient enough in (2)
that the conjugacy classes observed are only compatible with the
Galois group being our candidate $G$, then we have succeeded.
\par
This method is particularly simple when $G$ is ``guessed'' to be the
symmetric group acting on the roots of the polynomial: it can lead
quickly to examples of equations with this Galois group. 
% consists in
% looking at the way $P$ factors modulo (small) primes (not dividing the
% discriminant of $P$), and deducing from classical results of algebraic
% number theory the existence of elements in the Galois group which, as
% permutations of the roots of $P$, have cycle type corresponding to the
% ``factorization pattern'' modulo those primes. Then one can try to use
% group-theoretic results which describe ways of identifying a subgroup
% $G$ of the symmetric group $\sy_n$ (with $n=\deg(P)$) from
% information about which conjugacy classes of $\sy_n$ are
% represented by elements of $G$.
% \par
In general, however this is not the most efficient algorithm (if only
because the first step (1) is hard to formalize!), and thus computer
algebra systems use other techniques. Still, this method is
well-suited for certain theoretical investigations, for instance for
probabilistic Galois theory (see, e.g.,~\cite{gallagher}), and it can
be surprisingly efficient even for fairly complicated groups (see for
instance our joint works~\cite{jkz} and~\cite{jkz2} with F. Jouve, for
a case where the Weyl group of the exceptional Lie group $E_8$ is the
Galois group involved, and for for further developments along these
lines).
\par
In view of this, it is somewhat surprising that no general study of
the efficiency of the underlying algorithm seems to have been
performed. Among the very few references we know is a paper of
Dixon~\cite{dixon}, who considers informally the case of the symmetric
groups $\sy_n$ and mentions some earlier work of McKay. On the other
hand, there has been a fair amount of interest in the question of
determining the probability that a tuple of elements generate a finite
group, which is the analogue problem where conjugacy is ignored, see
for instance the paper~\cite{kantor-lubotzky} of Kantor and
Lubotzky. The paper~\cite{pomerance} of C. Pomerance considers the
question for abelian groups, when the conjugacy issue is also
irrelevant, and his results do apply to our setting. The current paper
will provide the beginning of the theoretical analysis of this type of
algorithm for general finite groups. As a specific result, we will
prove the following result (informally stated;
Theorem~\ref{th-cheb-symmetric} gives the precise statement using the
definitions of Section~\ref{sec-chebotarev}):

\begin{theorem}[Boundedness of Chebotarev invariants for 
symmetric groups]\label{th-intro}
  There exists a constant $c>0$ with the following property: for all
  integers $n\geq 1$, the average number of independently, randomly
  chosen conjugacy classes\footnote{\ This means distributed in
    proportion with the size of the conjugacy class.} of the symmetric
  group $\sy_n$ one must pick before ensuring that any tuple of
  elements taken from each of these classes generate $\sy_n$, is at
  most $c$. In fact, for any $k\geq 1$, there exists $c_k\geq 0$ such
  that the average of the $k$-th power of this number is bounded by
  $c_k$ for all $n$.
\end{theorem}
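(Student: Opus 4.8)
The plan is to model the process as a coupon-collector-type problem driven by the Chebotarev invariant defined in Section~\ref{sec-chebotarev}, and to reduce the claimed boundedness to an estimate on the number of conjugacy classes of $\sy_n$ that one can pick before the classes chosen necessarily generate $\sy_n$. Concretely, I would first translate the ``average number of classes needed'' into a sum over all maximal subgroups $M$ of $\sy_n$, or rather over the \emph{proper} subgroups that can arise as the group generated by a collection of elements, one from each chosen class; the relevant probabilistic quantity is controlled by $\sum_{M} \bigl(P(c\in M)\bigr)^{k}$-type expressions, where $c$ ranges over random conjugacy classes and $P(c\subseteq M)$ is the probability that a uniformly chosen (size-weighted) conjugacy class is contained in $M$. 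The $k$-th moment bound will follow from an inequality of the shape: the $k$-th moment of the waiting time is $\ll \sum_{M} \Bigl( \tfrac{|M|}{|\sy_n|}\Bigr)^{?}$ combined with a bound on how many maximal subgroups of each ``index type'' occur.

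The key input is the classification of maximal subgroups of $\sy_n$: these are (i) the intransitive ones $\sy_k\times\sy_{n-k}$, (ii) the imprimitive ones $\sy_a\wr\sy_b$ with $ab=n$, and (iii) the primitive maximal subgroups, which by a theorem of Bovey–Williamson / Babai (building on the classification of finite simple groups, or in weaker effective form on Bochert's and Praagh's bounds) have order at most $\exp(O(\sqrt{n}\log n))$ or better, and of which there are at most $n^{O(1)}$ up to conjugacy. For the intransitive subgroups, the probability that a random permutation lies in a conjugate of $\sy_k\times\sy_{n-k}$ is the probability that it fixes a $k$-set (setwise), which for $k$ bounded is $\asymp 1/k!$ times a convergent factor, and summing the relevant powers over $k$ gives a convergent series — this is where the constant $c$ (independent of $n$) really comes from. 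The imprimitive and primitive cases contribute negligibly because their indices in $\sy_n$ grow super-polynomially in $n$ while their number grows only polynomially.

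So the steps, in order, are: (1) quote the precise definition and the formula from Theorem~\ref{th-cheb-symmetric} expressing the $k$-th moment as a sum of products of probabilities $P(c\subseteq M)$ over chains of proper subgroups, or over maximal subgroups via inclusion–exclusion/union-bound; (2) bound each such probability in the three families above using the maximal-subgroup classification and standard estimates for the number of permutations with a given cycle type constrained to lie in $M$; (3) sum the intransitive contributions, recognizing a tail of $\sum_k k^{k}/(k!)^{?}$ or $\sum_k 1/k!$-type series that converges uniformly in $n$; (4) dispatch the imprimitive and primitive contributions by a crude ``index large, count polynomial'' argument; (5) assemble the pieces to get a bound $c_k$ depending only on $k$.

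The main obstacle will be step~(2)–(3) in the intransitive case: one needs a \emph{uniform in $n$} upper bound for the probability that several independent random conjugacy classes of $\sy_n$ are \emph{simultaneously} contained in some (varying) intransitive maximal subgroup, and the naive union bound over $k$ from $1$ to $n/2$ is not obviously summable unless one is careful — the probability that a random permutation fixes a $k$-set setwise is not simply $1/k!$ but involves the chance that the cycle type splits across the partition, so one must show this decays fast enough in $k$ (it does: it is $O(1/k!)$ by a generating-function or direct counting argument, uniformly in $n$), and then show that raising to the power needed for the $k$-th moment and summing still converges. Handling the dependence correctly — that ``being contained in $M$'' for a \emph{fixed} $M$ versus ``for some $M$ of this type'' — and making the whole estimate independent of $n$ is the crux.
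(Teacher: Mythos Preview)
Your plan diverges from the paper's argument in a fundamental way, and the divergence contains a real error.

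The paper does \emph{not} analyse maximal subgroups of $\sy_n$ directly. Instead it quotes the theorem of \L uczak and Pyber (Theorem~\ref{th-dixon}) as a black box: there exists a fixed $m$ such that $m$ independent uniform conjugacy classes generate $\sy_n$ with probability at least $1/2$, for every $n$. From this single statement the proof is three lines: if $\tau_{\sy_n}\geq k$, partition $\{1,\ldots,k-1\}$ into $\lfloor (k-1)/m\rfloor$ disjoint blocks of size $m$; each block independently fails to generate, so $\proba(\tau_{\sy_n}\geq k)\leq 2^{-\lfloor (k-1)/m\rfloor}$. Geometric decay gives a bounded exponential moment $\expect(c^{\tau_{\sy_n}})$ for any $1<c<2^{1/m}$, and all ordinary moments follow. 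The classification of maximal subgroups is buried inside the \L uczak--Pyber input.

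What you propose is, in effect, to reprove \L uczak--Pyber via a union bound over the intransitive maximal subgroups, and it is exactly here that your sketch breaks. You assert that the probability a uniformly random permutation of $\sy_n$ fixes some $k$-set setwise is $O(1/k!)$, uniformly in $n$. This is false. Already the event ``has a $k$-cycle'' forces an invariant $k$-set, and that event alone has probability $1-e^{-1/k}+o(1)\asymp 1/k$ for $k$ fixed and $n\to\infty$; so the probability in question is at least of order $1/k$, not $1/k!$. (For $k=1$ it is $1-1/e\approx 0.632$, for $k=2$ it is $1-2e^{-3/2}\approx 0.554$, and so on.) The true decay in $k$ is only a very slow power $k^{-\delta}$ with $\delta\approx 0.086$, a fact established later by Eberhard, Ford and Green and comparable in depth to \L uczak--Pyber itself. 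With such slow decay the naive union bound $\sum_{k\leq n/2}p_k^{\,m}$ does \emph{not} yield a bound independent of $n$ for small $m$, and the summability you need to bound moments uniformly does not come for free.

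So the intransitive case, which you correctly flag as the crux, cannot be handled by the estimate you propose; the difficulty you are trying to circumvent is precisely the content of the \L uczak--Pyber theorem. The clean fix is to do what the paper does: invoke Theorem~\ref{th-dixon} and then run the short geometric-decay argument above.
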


Here is the rough outline of this work: we consider probabilistic
models in Section~\ref{sec-chebotarev}, and define an invariant, which
we call the \emph{Chebotarev invariant} of a finite group, using such
a model (the name, based on the Chebotarev density theorem, is
justified in Section~\ref{sec-arith}); it makes precise the informal
notion in the statement of Theorem~\ref{th-intro}. Computing this
invariant is seen to be related to very interesting questions of group
theory, independently of any arithmetic motivation.  In
Section~\ref{sec-ex1}, we indicate how to compute this invariant for
abelian groups (based on Pomerance's work) and in
Section~\ref{sec-solvable} we consider solvable groups of a certain
``extremal'' type. In Sections~\ref{sec-non-ab-2},~\ref{sec-sym-alt}
and~\ref{sec-ex2}, we consider theoretical and numerical examples for
non-abelian, often non-solvable, groups -- in particular alternating
and symmetric groups, proving Theorem~\ref{th-intro}. Finally,
Section~\ref{sec-arith} makes some informal remarks concerning the
applicability of our results to arithmetic situations (the original
motivation); as we will see, there are non-trivial difficulties
involved, and we hope to come back to these questions later.
\par
\medskip
\par
\textbf{Notation.} As usual, $|X|$ denotes the cardinality of a set,
and $\Fp_q$ is a field with $q$ elements. If $G$ is a finite group,
and $H\subset G$, we write
$$
\nu_G(H)=\nu(H)=\frac{|H|}{|G|}.
$$
\par
We write $G^{\sharp}$ for the set of conjugacy classes of $G$, and for
$C\subset G^{\sharp}$, we also write $\nu_G(C)$ or $\nu(C)$ for
$\nu(\tilde{C})$, where $\tilde{C}\subset G$ is the union of all
conjugacy classes in $C$.
\par
In fact, as a matter of convenience, we will usually denote in the
same way a subset of conjugacy classes and the corresponding set of
elements in $G$, unless it is not clear in context if $c\in C$ means
that $c$ is a conjugacy class or an element of $G$ (we will write
often $c^{\sharp}$ for a conjugacy class, avoiding most ambiguity).
\par
We recall that a geometric random variable $X$ with parameter $p\in
[0,1]$ on a probability space is a random variable taking almost
surely values in the set of positive integers, with
\begin{equation}\label{eq-law-geom}
\proba(X=k)=p(1-p)^{k-1}
\end{equation}
for $k\geq 1$. We then have
\begin{equation}\label{eq-expect-geom}
\expect(X)=p\sum_{k\geq 1}{k(1-p)^{k-1}}=\frac{1}{p},\quad\quad
\expect(X^2)=\frac{2-p}{p^2},\quad\quad
\variance(X)=\frac{1-p}{p^2}.
\end{equation}
\par
By $f\ll g$ for $x\in X$, or $f=O(g)$ for $x\in X$, where $X$ is an
arbitrary set on which $f$ is defined, we mean synonymously that there
exists a constant $C\geq 0$ such that $|f(x)|\leq Cg(x)$ for all $x\in
X$. The ``implied constant'' refers to any value of $C$ for which this
holds. It may depend on the set $X$, which is usually specified
explicitly, or clearly determined by the context. Similarly, $f\asymp
g$ means that $f\ll g$ and $g\ll f$ on the same set. On the other hand
$f(x) \sim g(x)$ as $x\ra x_0$ means that $f(x)/g(x)\ra 1$ as $x\ra
x_0$.

% \par
% \medskip
% \par
% \textbf{Acknowledgment.} Thanks to D. Zywina for writing a
% \textsf{Magma} script to compute Chebotarev invariants, and for
% pointing out the earlier work of Dixon.

\section{The Chebotarev invariant of a finite
  group}\label{sec-chebotarev} 

In this section, we describe a natural probabilistic model for the
recognition algorithm described previously. Fix a finite group $G$.
We first remark that, whereas it does not make sense to say that a
conjugacy class lies in a certain subgroup, unless the latter is a
normal subgroup, it does make sense to say that it lies in a conjugacy
class of subgroups. With that in mind, we define:

\begin{definition}
  Let $G$ be a finite group, and let $C=\{C_1,\ldots,C_m\}\subset
  G^{\sharp}$ be a subset of conjugacy classes in $G$. Then $C$
  \emph{generates} $G$ if, for any choice of representatives $g_i\in
  C_i$ for $1\leq i\leq m$, the elements of the tuple $(g_1,\ldots, g_m)$ generate
  $G$. Equivalently, $C$ generates $G$ if and only if there is no
  (proper) maximal subgroup $H$ of $G$ 
  %such that each $C_i$ intersects $H$.
  that has non-empty intersection with each of the $C_i$.
  \footnote{\ Alternately, following~\cite{dixon}, one says that
    elements $(g_1,\ldots,g_m)$ \emph{invariably generate} $G$ if
    their conjugacy classes generate $G$ in the sense above.}
\end{definition}

The equivalence of the two definitions is quite clear
contrapositively: if there are $g_i\in C_i$ which generate a proper
subgroup $H_1$, then each $C_i$ intersects any maximal proper subgroup
$H$ of $G$ that contains $H_1$, and conversely.  Note also that the second condition
can be stated by saying that there is a conjugacy class of maximal
subgroups containing $C$.
\par
The following well-known lemma (due to Jordan) is the basic fact
underlying the whole technique:

\begin{lemma}\label{lm-triv}
Let $G$ be a finite group. Then the set $G^{\sharp}$ of conjugacy
  classes generates $H$. In other words, there is no proper
  subgroup of $G$ which contains a representative from each conjugacy
  class. 
  \end{lemma}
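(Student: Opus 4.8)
The plan is to prove the equivalent contrapositive form: for every proper subgroup $H$ of $G$ there is a conjugacy class none of whose elements lies in $H$. Since a conjugacy class is stable under conjugation, an element $x$ lies in \emph{some} conjugate $gHg^{-1}$ if and only if the whole class of $x$ meets $H$. Hence it suffices to show that the union $U=\bigcup_{g\in G}gHg^{-1}$ of all conjugates of $H$ is a \emph{proper} subset of $G$: then any $x\in G\setminus U$ has conjugacy class disjoint from $H$, so $H$ contains no representative of that class, and since $H$ was an arbitrary proper subgroup this gives the statement (using the equivalence recorded right after the definition of ``generates'').

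The proof that $U\neq G$ is a counting argument. The distinct conjugates of $H$ are in bijection with the cosets of its normalizer, so there are exactly $[G:N_G(H)]$ of them, and this is at most $[G:H]$ because $H\subseteq N_G(H)$. Every conjugate $gHg^{-1}$ contains the identity, so in forming the union the identity is counted once while each conjugate contributes at most $|H|-1$ further elements; therefore
\[
|U|\leq 1+[G:N_G(H)]\,(|H|-1)\leq 1+[G:H]\,(|H|-1)=1+|G|-[G:H].
\]
As $H$ is proper we have $[G:H]\geq 2$, whence $|U|\leq |G|-1<|G|$, as wanted.

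As for difficulty: there is no genuinely hard step here — the lemma is elementary and the only point requiring a moment's thought is that the conjugates of $H$ are very far from disjoint (they all share the identity, and usually much more), so the naive bound ``number of conjugates times $|H|$'' would over-count; subtracting merely the shared identity from every copy beyond the first is already enough to force the strict inequality. One could alternatively reduce to the case where $H$ is a maximal proper subgroup (any proper subgroup being contained in one), but this is unnecessary, since the estimate above applies verbatim to an arbitrary proper $H$.
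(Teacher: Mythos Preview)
Your proof is correct and is precisely the standard counting argument attributed to Jordan. The paper itself does not supply a proof of this lemma: it is stated as ``well-known (due to Jordan)'' and the reader is referred to Serre's note~\cite{serre-jordan} for further discussion, so there is nothing to compare against beyond observing that your argument is the classical one that would be found in that reference.
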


Simple as this is, one should also recall at this point that the
analogue of this lemma is false for infinite groups (even compact
groups, such as $SU(n)$, $n\geq 2$); for further discussion of various
interpretations of this lemma, see~\cite{serre-jordan}.
\par
Now, let $(\Omega,\Sigma,\proba)$ be a fixed probability space with a
sequence $X=(X_n)_{n\geq 1}$ of $G$-valued random variables
$$
X_n\,:\, \Omega\ra G,
$$
and let $X_n^{\sharp}$ be the conjugacy class of $X_n$ in
$G^{\sharp}$: those are $G^{\sharp}$-valued random variables.
\par
Intuitively, those $(X_n^{\sharp})$ are the conjugacy classes that we
see coming ``one by one''; the Chebotarev invariant(s) looks at when
we get enough information to conclude that those conjugacy classes can
not all belong to some proper subgroup of $G$.
\par
We now define a random variable $\tau_{X,G}$ (a \emph{waiting time})
by
\begin{equation}\label{eq-wait-see}
  \tau_{X,G}=\min\{n\geq 1\,\mid\, (X_1^{\sharp},\ldots,X_n^{\sharp})\text{
    generate $G$}\}\in [1,+\infty].
\end{equation}
\par
This depends on the sequence $X=(X_n)$, and it may be always infinite
(e.g., if $X_n=1$ for all $n$!). But it is, in an intuitive sense, the
``finest'' invariant in terms of this probabilistic model.  To obtain
more compact and purely numerical invariants, it is natural to first
take the expectation; this takes values in $[1,+\infty]$.

\begin{definition}
  Let $G$ be a finite group, $X=(X_n)$ a sequence of $G$-valued random
  variables and $\tau_{X,G}$ the waiting time above. The
  \emph{Chebotarev invariant} of $G$ with respect to $X$, denoted
  $\cheb{G;X}$, is the expectation $\cheb{G;X}=\expect(\tau_{X,G})$ of
  this random variable.
\end{definition}

\begin{remark}
We focused on conjugacy classes because this is how applications to
Galois theory are likely to arise, but of course one can similarly
define an invariant using the original random elements $(X_n)$ in
$G$. If $G$ is abelian, the two coincide.
\end{remark}

To have an unambiguously defined invariant, we must use a specific
choice of sequence $(X_n)$. The natural model is that of independent,
uniformly distributed elements in $G$: if $(X_n)$ are independent and
identically uniformly distributed $G$-valued random variables, so that
$$
\proba(X_n=g)=\frac{1}{|G|}\quad\quad\text{ for all } g\in
G,\quad\text{ and all } n\geq 1,
$$
and hence
$$
\proba(X_n^{\sharp}=g^{\sharp})=\frac{|g^{\sharp}|}{|G|},\quad\quad\text{
  for all } g^{\sharp}\in G^{\sharp},\quad\text{ and all } n\geq 1,
$$
then we call $\cheb{G;X}$ \emph{the} Chebotarev invariant, and we just
write $\cheb{G}$.

\begin{remark}
  It may be of interest, at least for numerical purposes, to use a
  sequence $(X_n)$ which is not independent, but is obtained, for
  instance, by a rapidly mixing random walk on $G$. Also, the
  arithmetic analogue for computing Galois groups may be interpreted
  as involving non-independent and non-uniform choices of conjugacy
  classes (see Section~\ref{sec-arith}).
%  (see~\cite{chebo2} when it is available).
\end{remark}

Other numerical invariants may of course be derived from $\tau_{X,G}$,
starting from the higher moments $\expect(\tau_{X,G}^k)$ for $k\geq
1$. Since it is probabilistically most important, when the expectation
of a random variable is known, to also have a control of its second
moment, we define formally the \emph{secondary Chebotarev invariant}:

\begin{definition}
  Let $G$ be a finite group, $X=(X_n)$ a sequence of $G$-valued random
  variables, and let $\tau_{X,G}$ be the waiting time above. The
  \emph{secondary Chebotarev invariant} of $G$ with respect to $X$, is
  the second moment $\scheb{G;X}=\expect(\tau_{X,G}^2)$. If $(X_n)$ is
  a sequence of independent, uniformly distributed random variables,
  then we write $\scheb{G}$ and call it \emph{the} secondary
  Chebotarev invariant.
\end{definition}

We will now give formulas for the two Chebotarev invariants (in the
independent case), which are expressed purely in terms of
group-theoretic information.  This is useful for explicit
computations, at least for groups which are very well understood (but
often the probabilistic origin of $\cheb{G}$ should also be kept in
mind.)
\par
To state the formulas, we must introduce the following data and
notation about $G$. Let $\max(G)$ be the set of conjugacy classes of
(proper) maximal subgroups of $G$ (if $G$ is trivial, this is empty);
for a conjugacy class of of maximal subgroups $\Hc\in \max(G)$, let
$\Hc^{\sharp}$ denote the set of conjugacy classes $C$ of $G$ which
``occur in $\Hc$'', i.e., such that $C\cap H_1\not=\emptyset$ for
\emph{some} $H_1$ in the conjugacy class $\Hc$.\footnote{\ Note that
  this depends on the underlying group $G$.} Moreover, if $I\subset
\max(G)$ is a set of conjugacy classes of maximal subgroups, we let
\begin{equation}\label{eq-hi}
\Hc_I^{\sharp}=\bigcap_{\Hc\in I}{\Hc^{\sharp}},
\end{equation}
the set of conjugacy classes of $G$ which appear in all subgroups in $I$.
\par
Then we have:

\begin{proposition}\label{pr-comput}
  Let $G$ be a \emph{non-trivial} finite group. With notation as
  above, we have
  \begin{equation}\label{eq-comput}
\cheb{G}=\sum_{\stacksum{I\subset \max(G)}{I\not=\emptyset}}{
\frac{(-1)^{|I|+1}}{
1-\nu(\Hc_I^{\sharp})}},
\end{equation}
and
\begin{equation}\label{eq-secondary}
\scheb{G}=
\sum_{\stacksum{I\subset \max(G)}{I\not=\emptyset}}{
\frac{(-1)^{|I|}}{
1-\nu(\Hc_I^{\sharp})}
\Bigl(
1-\frac{2}{1-\nu(\Hc_I^{\sharp})}
\Bigr)}=
\sum_{\stacksum{I\subset \max(G)}{I\not=\emptyset}}{
(-1)^{|I|+1}\frac{1+\nu(\Hc_I^{\sharp})}{
(1-\nu(\Hc_I^{\sharp}))^2}}.
% \dblsum_{
% \stacksum{\emptyset\not=I\subset \max(G)}
% {\emptyset\not=J\subset \max(G)}}{
% (-1)^{|I|+|J|}\frac{1}
% {(1-\nu(H_{I\cup J}^{\sharp}))}
% \Bigl\{
% \frac{1}{1-\nu(\Hc_I^{\sharp})}+
% \frac{1}{1-\nu(H_J^{\sharp})}-1\Bigr\}
% }.
\end{equation}
\end{proposition}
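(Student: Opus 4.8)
The plan is to compute the law of the waiting time $\tau=\tau_{X,G}$ explicitly by inclusion--exclusion over $\max(G)$, and then read off its first two moments. First I would translate the event $\{\tau>n\}$ into group theory. By the (contrapositive of the) definition of ``generates'', together with Jordan's Lemma~\ref{lm-triv} --- which ensures that any proper subgroup lies in a proper maximal one --- the classes $X_1^\sharp,\dots,X_n^\sharp$ fail to generate $G$ if and only if there is a conjugacy class $\Hc\in\max(G)$ of maximal subgroups with $X_i^\sharp\in\Hc^\sharp$ for all $i\le n$. Here I would remark that ``$C$ occurs in $\Hc$'' is genuinely well defined on classes of subgroups, since a conjugation-invariant subset $C$ of $G$ meets one conjugate of a subgroup $H$ precisely when it meets all of them. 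Setting $A_\Hc^{(n)}=\{X_i^\sharp\in\Hc^\sharp\text{ for all }i\le n\}$, this reads $\{\tau>n\}=\bigcup_{\Hc\in\max(G)}A_\Hc^{(n)}$.

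Next, since $\max(G)$ is finite and nonempty (as $G$ is nontrivial), inclusion--exclusion gives $\proba(\tau>n)=\sum_{\emptyset\ne I\subset\max(G)}(-1)^{|I|+1}\proba\bigl(\bigcap_{\Hc\in I}A_\Hc^{(n)}\bigr)$. The intersection $\bigcap_{\Hc\in I}A_\Hc^{(n)}$ is exactly the event that $X_i^\sharp\in\Hc_I^\sharp$ for every $i\le n$; since a single uniform $X_i$ satisfies $\proba(X_i^\sharp\in\Hc_I^\sharp)=\nu(\Hc_I^\sharp)$ and the $X_i$ are independent, that probability equals $\nu(\Hc_I^\sharp)^n$. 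Hence $\proba(\tau>n)=\sum_{\emptyset\ne I}(-1)^{|I|+1}\nu(\Hc_I^\sharp)^n$. At this stage I would also note that $\nu(\Hc_I^\sharp)<1$ for every nonempty $I$ --- again a consequence of Lemma~\ref{lm-triv}, because $G^\sharp$ itself is not contained in any single $\Hc^\sharp$ --- so that $\proba(\tau>n)\to 0$ and $\tau<\infty$ almost surely.

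Finally I would assemble the moments. Using $\expect(\tau)=\sum_{n\ge 0}\proba(\tau>n)$, substituting the formula above, and pulling the finite sum over $I$ outside (each resulting series $\sum_{n\ge 0}\nu(\Hc_I^\sharp)^n$ converging since $\nu(\Hc_I^\sharp)<1$), the geometric series $\sum_{n\ge0}x^n=1/(1-x)$ yields \eqref{eq-comput}. For the second moment I would write $\tau^2=\sum_{n=1}^{\tau}(2n-1)$, so that $\expect(\tau^2)=\sum_{m\ge 0}(2m+1)\proba(\tau>m)$, interchange in the same way, and evaluate $\sum_{m\ge0}(2m+1)x^m=\tfrac{2x}{(1-x)^2}+\tfrac{1}{1-x}=\tfrac{1+x}{(1-x)^2}$; this gives the second expression in \eqref{eq-secondary}, and the first expression follows from the elementary identity $\tfrac{1}{1-x}\bigl(1-\tfrac{2}{1-x}\bigr)=-\tfrac{1+x}{(1-x)^2}$. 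I do not anticipate a real obstacle here: the content is entirely the combinatorial identification of $\{\tau>n\}$ in the first step, after which everything reduces to bookkeeping of finite alternating sums of geometric series. The only points needing a line of care are the well-definedness of ``$C$ occurs in $\Hc$'', the appeal to Jordan's lemma to get $\nu(\Hc_I^\sharp)<1$, and the (elementary, since $\max(G)$ is finite) justification of the interchange of summations.
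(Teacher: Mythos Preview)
Your argument is correct. The key identification $\{\tau>n\}=\bigcup_{\Hc}A_\Hc^{(n)}$ is exactly the content of the paper's Lemma~\ref{lm-link}, and your verification that $\nu(\Hc_I^\sharp)<1$ via Jordan's lemma is fine.

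Where you diverge from the paper is in the bookkeeping. The paper proves a \emph{pointwise} inclusion--exclusion identity $\tau=\sum_{\emptyset\ne I}(-1)^{|I|+1}T_I$ (its equation~(\ref{eq-incl-excl}) inside a general coupon-collector Proposition~\ref{pr-collect}), where $T_I$ is the first $n$ with $X_n^\sharp\notin\Hc_I^\sharp$, and then takes expectation term by term. For the first moment this is essentially equivalent to your tail-sum approach (the paper itself says so in the remark containing~(\ref{eq-formula-probas})). For the second moment the paper squares the pointwise identity, getting a double sum over $I,J$, computes each $\expect(T_IT_J)$ by a separate joint-distribution calculation (Lemma~\ref{lm-prod-geom}), and then collapses the double sum back to a single sum by a combinatorial reduction (the $W(\beta)$ argument). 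Your route via $\expect(\tau^2)=\sum_{m\ge0}(2m+1)\proba(\tau>m)$ bypasses all of that and is considerably shorter.

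What the paper's longer route buys is the pointwise identity itself and the striking ``linearity'' remark that follows Proposition~\ref{pr-collect}: both $\expect(\tau_G)$ and $\expect(\tau_G^2)$ are alternating sums of $\expect(T_I)$ and $\expect(T_I^2)$, respectively. Your approach gets to~(\ref{eq-comput}) and~(\ref{eq-secondary}) faster but does not make that structure visible.
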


These formulas do not apply for the trivial group, since they lead to
empty sums which are zero, whereas the definition leads to\footnote{\
  One may argue that there is no need to look at any elements to be
  sure of generating the trivial group, but this does not correspond
  to the definition.}  $\cheb{1}=1$, $\scheb{1}=1$.
\par
Probabilists will have noticed that the first formula (at least) is
very similar to the one for the expectation of the waiting time for a
general coupon collector problem. There is indeed a link, which is
provided by the next lemma where independence of the random elements
$X_n$ is not required.% (it will also be used in~\cite{chebo2}).

\begin{lemma}\label{lm-link}
  Let $G$ be a non-trivial finite group and $X=(X_n)$ a sequence of
  $G$-valued random variables. The waiting time $\tau_{X,G}$ is equal
  to
$$
\tau_{X,G}=\max_{\Hc\in \max{G}}{\hat{\tau}_{\Hc}},
$$
where 
\begin{equation}\label{eq-hat}
\hat{\tau}_{\Hc}=\min\{n\geq 1\,\mid\, X_n^{\sharp}\notin \Hc^{\sharp}\}\ ;
\end{equation}
note that $\hat{\tau}_{\Hc}$ depends also on $X$.
\end{lemma}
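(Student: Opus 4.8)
The plan is to reduce the identity to a tautology, by unwinding the definition~\eqref{eq-wait-see} of the waiting time together with the equivalent, maximal-subgroup description of ``generating'' recorded in the excerpt. First I would fix $n\geq 1$ and a point of $\Omega$, and recall that the family of conjugacy classes $(X_1^{\sharp},\ldots,X_n^{\sharp})$ generates $G$ if and only if there is no conjugacy class $\Hc\in\max(G)$ with $X_i^{\sharp}\in\Hc^{\sharp}$ for all $i\leq n$; equivalently, for every $\Hc\in\max(G)$ there is some index $i\leq n$ with $X_i^{\sharp}\notin\Hc^{\sharp}$. For a fixed $\Hc$, this last condition says exactly that $\hat{\tau}_{\Hc}\leq n$, in the notation of~\eqref{eq-hat}. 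Hence, for every $n\geq 1$,
\[
(X_1^{\sharp},\ldots,X_n^{\sharp})\ \text{generate}\ G
\iff \hat{\tau}_{\Hc}\leq n\ \text{for all}\ \Hc\in\max(G)
\iff \max_{\Hc\in\max(G)}\hat{\tau}_{\Hc}\leq n,
\]
where the second equivalence uses that $\max(G)\neq\emptyset$ (since $G$ is non-trivial, the trivial subgroup is a proper subgroup, hence contained in a maximal one), so the maximum ranges over a non-empty finite set and lies in $[1,+\infty]$.

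The second step is to take the least such $n$ on each side. By~\eqref{eq-wait-see}, $\tau_{X,G}$ is by definition the smallest $n$ for which the left-hand condition holds; moreover, enlarging a family of conjugacy classes cannot destroy the generation property (``$C'\subset\Hc^{\sharp}$'' is weaker than ``$C\subset\Hc^{\sharp}$'' when $C\subset C'$), so the set of valid $n$ is the up-set $\{\,n\geq 1 : n\geq\tau_{X,G}\,\}$, with the usual convention that this set is empty when $\tau_{X,G}=+\infty$. The same statement applies verbatim with $\max_{\Hc\in\max(G)}\hat{\tau}_{\Hc}$ in place of $\tau_{X,G}$ and the right-hand condition in place of the left-hand one. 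Two up-sets of positive integers with the same elements have the same infimum in $[1,+\infty]$, whence $\tau_{X,G}=\max_{\Hc\in\max(G)}\hat{\tau}_{\Hc}$ at the chosen point of $\Omega$, hence everywhere; note that no independence or uniformity assumption on the $X_n$ is used.

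I do not expect a genuine obstacle here: the whole content is the equivalence between the two formulations of ``$C$ generates $G$'', which has already been justified in the excerpt. The only points deserving a moment's care are bookkeeping ones — that $\max(G)$ is finite and non-empty for non-trivial $G$, and that all the quantities in play are allowed to equal $+\infty$, so that the asserted identity still makes sense (and holds, reading $\infty=\infty$) in degenerate situations such as $X_n\equiv 1$.
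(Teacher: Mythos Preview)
Your proof is correct and is exactly the unwinding of definitions the paper has in mind; indeed, the paper gives no formal proof of this lemma, only the informal paraphrase immediately following its statement (``In other words, $\tau_{X,G}$ is also the maximal $n$ such that\ldots''). Your explicit handling of the edge cases ($\max(G)\neq\emptyset$, possible value $+\infty$) is more careful than the paper itself.
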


In other words, $\tau_{X,G}$ is also the maximal $n$ such that we need
to look at $X_i$ for $i$ up to $n$, before we witness, for every
conjugacy class $\Hc$ of maximal subgroups, some $X_n$ which is
incompatible with the groups in this class $\Hc$. This is very close
to a coupon collector problem (see, e.g.,~\cite{flajolet} for a
general description of this type of problems), where the ``coupons''
we need to collect correspond to the conjugacy classes which are not
in $\Hc^{\sharp}$, as $\Hc$ ranges over $\max(G)$.  But since a single
$X_n$ may serve as coupon for more than one $\Hc^{\sharp}$, this does
not exactly correspond to standard coupon collector
problems.\footnote{\ This has been called the ``coupon subset
  collection problem'' by Adler and Ross~\cite{adler-ross}.}  Because
of this, we state and prove the following general abstract result,
which may have other applications.

\begin{proposition}\label{pr-collect}
  Let $(\Omega,\Sigma,\proba)$ be a probability space, $D$ a finite
  set. Let $(Z_n)$ be a sequence of $D$-valued random variables. Let
  $\mathcal{E}$ be a non-empty finite collection of non-empty subsets
  of $D$, and let
$$
\tau_{\mathcal{E}}=\min\{n\geq 1\,\mid\, \text{ for all $E\in
  \mathcal{E}$, there exists some $m\leq n$ with $Z_m\in E$}\}
$$
be the waiting time before all subsets $E\in\mathcal{E}$ have been
witnessed in the sequence $(Z_n)$. For $I\subset \mathcal{E}$,
non-empty, let
\begin{equation}\label{eq-wait-geom}
  T_I=\min\{n\geq 1\,\mid\, Z_n\in E\text{ for some subset } E\in I\}.
\end{equation}
\par
\emph{(1)} Assume $T_I<+\infty$ almost surely for all non-empty
subsets $I\subset \mathcal{E}$. Then we have
\begin{equation}\label{eq-incl-excl}
\tau_{\mathcal{E}}=\sum_{\emptyset\not=I\subset
  \mathcal{E}}{(-1)^{|I|+1}T_{I}}.
\end{equation}
\par
\emph{(2)} Assume the $Z_n$ are independent and identically
distributed random variables and let $\mu$ be their common law. We
have
\begin{equation}\label{eq-expect-collect}
\expect(\tau_{\mathcal{E}})=
\sum_{\stacksum{I\subset \mathcal{E}}{I\not=\emptyset}}{
\frac{(-1)^{|I|+1}}{
\proba(Z_n\in \bigcup_{E\in I}{E})}}=
\sum_{\stacksum{I\subset \mathcal{E}}{I\not=\emptyset}}{
\frac{(-1)^{|I|+1}}{
\mu(\bigcup_{E\in I}{E})}},
\end{equation}
and if the subsets in $\mathcal{E}$ are disjoint, we have
\begin{equation}\label{eq-integrale}
\expect(\tau_{\mathcal{E}})=
\int_0^{+\infty}{
\Bigl(1-\prod_{E\in\mathcal{E}}{(1-\exp(-\mu(E)t))}\Bigr)dt
}.
\end{equation}
\par
\emph{(3)} We have
\begin{equation}\label{eq-sec-collect}
\expect(\tau_{\mathcal{E}}^2)=
\sum_{\stacksum{I\subset \mathcal{E}}{I\not=\emptyset}}{
\frac{(-1)^{|I|}}{
\mu(\bigcup_{E\in I}{E})}\Bigl(
1-\frac{2}{\mu(\bigcup_{E\in I}{E})}
\Bigr)}.
\end{equation}
\end{proposition}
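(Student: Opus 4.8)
The plan is to derive everything from a single ``max $=$ inclusion–exclusion over min'' identity, exactly as in Lemma~\ref{lm-link}. First I would prove part (1). For a non-empty $I\subset\mathcal{E}$, the random variable $T_I$ defined in~\eqref{eq-wait-geom} is the first time we witness \emph{some} member of the subfamily $I$, so that $T_I = \min_{E\in I} T_{\{E\}}$, and $\tau_{\mathcal{E}} = \max_{E\in\mathcal{E}} T_{\{E\}}$. Thus the claim~\eqref{eq-incl-excl} is the purely combinatorial identity
\[
\max_{E\in\mathcal{E}} a_E = \sum_{\emptyset\neq I\subset\mathcal{E}} (-1)^{|I|+1}\min_{E\in I} a_E
\]
valid for any real (or $[0,+\infty]$-valued) tuple $(a_E)_{E\in\mathcal{E}}$; under the hypothesis $T_I<+\infty$ a.s.\ all terms are finite a.s.\ and there is no $\infty-\infty$ ambiguity. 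I would prove this identity by sorting the $a_E$ in non-decreasing order and telescoping, or equivalently by the standard inclusion–exclusion for $\max(a,b)=a+b-\min(a,b)$ iterated, or most cleanly by integrating the pointwise identity $\charfun_{\max_E a_E > t} = \charfun_{\exists E:\, a_E>t} = 1-\prod_E(1-\charfun_{a_E>t})$, expanding the product, and using $\min_{E\in I}a_E > t \iff a_E>t$ for all $E\in I$. This integral representation will be reused below, so I would state it once as the central computation.

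For part (2), I integrate~\eqref{eq-incl-excl}: by the hypothesis of part~(1), each $T_I$ is an a.s.-finite non-negative random variable, and there are finitely many subsets $I$, so linearity of expectation applies term by term (no convergence subtlety, the sum is finite), giving $\expect(\tau_{\mathcal{E}}) = \sum_{\emptyset\neq I}(-1)^{|I|+1}\expect(T_I)$. Now, when the $Z_n$ are i.i.d.\ with law $\mu$, the event $\{Z_n\in \bigcup_{E\in I}E\}$ has probability $p_I := \mu(\bigcup_{E\in I}E)$ independently for each $n$, so $T_I$ is a geometric random variable with parameter $p_I$; here $p_I>0$ because $T_I<+\infty$ a.s. By~\eqref{eq-expect-geom}, $\expect(T_I)=1/p_I$, which is precisely~\eqref{eq-expect-collect}. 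For the integral formula~\eqref{eq-integrale} in the disjoint case, I would instead go back to the integral representation $\expect(\tau_{\mathcal{E}}) = \int_0^\infty \proba(\tau_{\mathcal{E}}>t)\,dt$ (valid since $\tau_{\mathcal{E}}$ is a.s.-finite non-negative when all $p_{\{E\}}>0$, using $\tau_{\mathcal{E}}>t \iff$ some $E$ not yet witnessed by time $\lfloor t\rfloor$... actually more carefully $\proba(\tau_{\mathcal{E}}>t)=\proba(\tau_{\mathcal{E}}\ge \lceil t\rceil + 1)$ and I would be a little careful that the integrand uses a continuous interpolation); the cleanest route is to note $\proba(\tau_{\mathcal{E}}>n)=1-\prod_{E}(1-(1-\mu(E))^{n})$ using independence \emph{across the disjoint $E$'s} (this is where disjointness enters — the events ``$E$ witnessed by time $n$'' are not independent in general, but become so, in a suitable limiting sense, when the $\mu(E)$ are attached to disjoint pieces) and then comparing $\sum_{n\ge 0}(1-(1-\mu(E))^n)$-type sums with $\int_0^\infty(1-e^{-\mu(E)t})$-type integrals. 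I would present~\eqref{eq-integrale} as following from~\eqref{eq-expect-collect} by the elementary identity $1/p = \int_0^\infty e^{-pt}\,dt$ together with the algebraic expansion $1-\prod_E(1-x_E) = \sum_{\emptyset\neq I}(-1)^{|I|+1}\prod_{E\in I}x_E$ applied to $x_E = e^{-\mu(E)t}$, integrating, and recognizing $\prod_{E\in I}e^{-\mu(E)t} = e^{-p_I t}$ — which uses disjointness precisely to convert the sum of measures over $E\in I$ into the measure of the union.

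For part (3) I would square~\eqref{eq-incl-excl} and take expectations, but the cross terms $\expect(T_I T_J)$ do not obviously simplify, so instead I would use the slicker route via the identity $\expect(X^2) = \sum_{n\ge1}(2n-1)\proba(X=n) = 2\expect\!\big(\sum_{n\ge1} n\,\charfun_{X\ge n}\big) - \expect(X)$, or more directly $\expect(X^2) = 2\int_0^\infty t\,\proba(X>t)\,dt + \text{(correction)}$ in the discrete setting, i.e.\ $\expect(X^2) = \sum_{n\ge0}(2n+1)\proba(X>n)$. Plugging $\proba(\tau_{\mathcal{E}}>n) = \sum_{\emptyset\neq I}(-1)^{|I|+1}\proba(T_I>n) = \sum_{\emptyset\neq I}(-1)^{|I|+1}(1-p_I)^n$ (valid by part (1) applied pathwise: $\tau_{\mathcal{E}}>n \iff \max_E T_{\{E\}} > n$, and expanding), and using $\sum_{n\ge0}(2n+1)q^n = \frac{1+q}{(1-q)^2}$ with $q=1-p_I$, gives $\expect(\tau_{\mathcal{E}}^2) = \sum_{\emptyset\neq I}(-1)^{|I|+1}\frac{1+(1-p_I)}{p_I^2} = \sum_{\emptyset\neq I}(-1)^{|I|+1}\frac{2-p_I}{p_I^2}$, and rewriting $\frac{2-p_I}{p_I^2} = -\frac{1}{p_I}\big(1-\frac{2}{p_I}\big)$ yields exactly~\eqref{eq-sec-collect} after absorbing the sign $(-1)^{|I|+1} = -(-1)^{|I|}$. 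The main obstacle I anticipate is bookkeeping rather than conceptual: making the pathwise identity $\charfun_{\tau_{\mathcal{E}}>n} = \sum_{\emptyset\neq I}(-1)^{|I|+1}\charfun_{T_I>n}$ rigorous (it is again the $\max/\min$ identity applied with $a_E = T_{\{E\}}(\omega)$ and the indicator $\charfun_{\cdot > n}$), handling the $+\infty$ case cleanly (everything is fine once the relevant $T_I$ are a.s.\ finite, which for part (3) means all singleton probabilities $p_{\{E\}}>0$; if some is zero, $\tau_{\mathcal{E}}=+\infty$ a.s.\ and both sides are $+\infty$, consistent once one checks signs), and — for~\eqref{eq-integrale} — pinning down exactly where disjointness is used and why it is genuinely needed there but not in~\eqref{eq-expect-collect}.
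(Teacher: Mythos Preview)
Your proposal is correct. Parts (1) and (2) are essentially the paper's argument: the paper phrases the max/min identity as inclusion--exclusion for the Lebesgue measure of $\bigcup_E [0,T_{\{E\}}]$, but this is exactly your ``integrate the indicator'' route; and for~\eqref{eq-integrale} the paper does precisely what you settle on, namely expand the product, integrate $e^{-p_I t}$, and use disjointness to identify $\sum_{E\in I}\mu(E)$ with $\mu(\bigcup_{E\in I}E)$.

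Part (3) is where you genuinely diverge, and your route is cleaner. The paper \emph{does} square~\eqref{eq-incl-excl} and confront the cross terms: it proves a separate lemma computing $\expect(T_I T_J)$ explicitly from the joint law of $(T_I,T_J)$, obtaining
\[
\expect(T_I T_J)=\frac{1}{\mu(E_{I\cup J})}\Bigl(\frac{1}{\mu(E_I)}+\frac{1}{\mu(E_J)}-1\Bigr),
\]
and then performs a combinatorial reduction (rearranging the double sum over $(I,J)$ according to $K=I\cup J$ and collapsing it to a single sum over $K$) to reach~\eqref{eq-sec-collect}. Your tail-sum approach via $\expect(X^2)=\sum_{n\ge 0}(2n+1)\proba(X>n)$ combined with the pathwise inclusion--exclusion for $\{\tau_{\mathcal{E}}>n\}$ bypasses both the joint-law computation and the combinatorial reduction entirely, and in fact directly establishes the ``strange-looking linearity'' $\expect(\tau_{\mathcal{E}}^2)=\sum_{\emptyset\ne I}(-1)^{|I|+1}\expect(T_I^2)$ that the paper only observes \emph{a posteriori} in a remark. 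The price is that you do not obtain the formula for $\expect(T_I T_J)$ as a byproduct; the benefit is a shorter and more transparent proof of the stated proposition.
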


When $\mathcal{E}$ is the set of singletons in $D$, where we have
exactly the coupon collector problem, the formulas for the expectation
are well-known (see, e.g.,~\cite[Theorem 4.1]{flajolet} for the
integral formula); we have not seen general formulas for the second
moment in the literature.

\begin{proof}[Proof of Proposition~\ref{pr-collect}]
To simplify notation, define
\begin{equation}\label{eq-ei}
E_I=\bigcup_{E\in I}{E},
\end{equation}
for each $I\subset \mathcal{E}$. Formula~(\ref{eq-incl-excl})
-- which implies in particular that $\tau_{\mathcal{E}}$ is finite
almost surely -- can be checked, e.g., by seeing $\tau_{\mathcal{E}}$ as
the length of the subset
$$
\bigcup_{E\in \mathcal{E}}{[0,T_{\{E\}}]}\subset \Rr,
$$
(which is therefore almost surely finite by assumption on the
$T_I$), and applying the inclusion-exclusion formula for the
measure of a union of finitely many sets (for the Lebesgue measure, or
the counting measure on $\Zz$, equivalently):
$$
\Bigl|\bigcup_{E\in \mathcal{E}}{[0,T_{\{E\}}]}\Bigr|=
\sum_{\emptyset\not=I\subset \mathcal{E}}{
(-1)^{|I|+1}\Bigl|
\bigcap_{E\in I}{[0,T_{\{E\}}]}
\Bigr|},
$$
at which point it is enough to observe that, for any
$I\subset \mathcal{E}$, we have
$$
\Bigl|
\bigcap_{E\in I}{[0,T_{\{E\}}]}
\Bigr|=
\min_{E\in I}{T_{\{E\}}}=T_I.
$$
\par
We can now finish the computation of $\expect(\tau_{\mathcal{E}})$ in
(2), in the case of independent random variables.  Indeed, in that
case the random variable $T_I$ is distributed like a geometric
random variable with parameter $p=\proba(Z_n\in E_I)$
(see~(\ref{eq-law-geom})) for any non-empty subset $I\subset
\mathcal{E}$, so that taking expectation in~(\ref{eq-incl-excl}) and
applying~(\ref{eq-expect-geom}), we obtain the result.
\par
The integral expression~(\ref{eq-integrale}) is a consequence
of~(\ref{eq-expect-collect}) and the additivity of measure for
disjoint sets: it suffices to expand the product and use
$$
\int_0^{+\infty}{e^{-at}dt}=\frac{1}{a},\quad\text{ for } a>0.
$$
\par
Finally, to compute the second moment in the independent case, we
start with the same formula~(\ref{eq-incl-excl}) to get
$$
\expect(\tau_{\mathcal{E}}^2)=\dblsum_{
\stacksum{\emptyset\not=I\subset \mathcal{E}}
{\emptyset\not=J\subset \mathcal{E}}}{
}{(-1)^{|I|+|J|}\expect(T_{I}T_{J})}.
$$
\par
We first transform this by applying~(\ref{eq-prod-geom}) in
Lemma~\ref{lm-prod-geom} below to compute
$\expect(T_IT_J)$. This gives
\begin{align}
\expect(\tau_{\mathcal{E}}^2)&=\dblsum_{
\stacksum{\emptyset\not=I\subset \mathcal{E}}
{\emptyset\not=J\subset \mathcal{E}}}{
\frac{(-1)^{|I|+|J|}}
{\mu(E_{I\cup J})}
\Bigl\{
\frac{1}{\mu(E_I)}+
\frac{1}{\mu(E_J)}-1
\Bigr\}}\nonumber\\
&=
\dblsum_{
\stacksum{\emptyset\not=I\subset \mathcal{E}}
{\emptyset\not=J\subset \mathcal{E}}}{
\frac{(-1)^{|I|+|J|}}
{\mu(E_{I\cup J})}
\Bigl\{
\frac{2}{\mu(E_I)}-1
\Bigr\}}\quad\quad\text{(by symmetry).}\label{eq-red-w}
\end{align}
\par
To continue, consider more generally arbitrary complex coefficients
$\beta(I)$ defined for $I\subset \mathcal{E}$, and the expression
$$
W(\beta)=\dblsum_{
\stacksum{\emptyset\not=I\subset \mathcal{E}}
{\emptyset\not=J\subset \mathcal{E}}}{
\frac{(-1)^{|I|+|J|}}
{\mu(E_{I\cup J})}\beta(I)};
$$
note that $\expect(\tau_{\mathcal{E}}^2)$ is a simple combination of
two such expressions.
\par
We proceed to reduce $W(\beta)$ to a single sum over $I\subset
\mathcal{E}$ by rearranging the sum according to the value of $I\cup
J$:
$$
W(\beta)=\sum_{\emptyset\not=K\subset \mathcal{E}}{
\frac{1}{\mu(E_K)}
\dblsum_{\stacksum{\emptyset\not=I,J\subset \mathcal{E}}
{I\cup J=K}}
{(-1)^{|I|+|J|}\beta(I)}}.
$$
\par
The inner sum is rearranged in turn as
\begin{align*}
\dblsum_{\stacksum{\emptyset\not=I,J\subset \mathcal{E}}
{I\cup J=K}}
{(-1)^{|I|+|J|}\beta(I)}&=
\sum_{\emptyset\not=I\subset K}{
(-1)^{|I|}\beta(I)
\sum_{\stacksum{\emptyset\not=J\subset K}{I\cup J=K}}
{(-1)^{|J|}}
}\\
&=\sum_{\emptyset\not=I\subset K}{
(-1)^{|I|+|K-I|}\beta(I)
\sum_{\stacksum{I'\subset I}{I'\cup (K-I)\not=\emptyset}}
{(-1)^{|I'|}}
}
\end{align*}
since the subsets $J$ with $I\cup J=K$ are parametrized by $I'\subset
I$ using the correspondence $I'\mapsto (K-I)\cup I'$ with inverse
$J\mapsto J\cap I$.
\par
For fixed $I$, the last summation condition $I'\cup
(K-I)\not=\emptyset$ is always valid, \emph{unless} $I=K$. In that
last case, it only excludes the set $I'=\emptyset$ from all $I'\subset
I$. Since we have, for any finite set $X$, the binomial relation
$$
\sum_{Y\subset X}{(-1)^{|Y|}}=0,
$$
it follows that the double sum is simply given by
$$
\dblsum_{\stacksum{\emptyset\not=I,J\subset \mathcal{E}}
{I\cup J=K}}
{(-1)^{|I|+|J|}\beta(I)}=(-1)^{|K|+1}\beta(K),
$$
and hence
$$
W(\beta)=\sum_{\emptyset\not=K\subset \mathcal{E}}{
\frac{(-1)^{|K|+1}\beta(K)}{\mu(E_K)}}.
$$
\par
Applied to the expression~(\ref{eq-red-w}), this leads precisely
to~(\ref{eq-sec-collect}). 
\end{proof}

To deduce Proposition~\ref{pr-comput}, we apply this proposition with
$$
Z_n=X_n^{\sharp},\quad\quad
D=G^{\sharp}\quad\quad \mathcal{E}=\{G^{\sharp}-\Hc^{\sharp}\,\mid\,
\Hc\in\max(G)\},
$$
in the case where the $(X_n)$ are independent uniformly distributed on
$G$, so that the common distribution is $\mu=\nu$. Since, for
$I\subset \max{G}$, we have
$$
\nu\Bigl(\bigcup_{\Hc\in I}{(G^{\sharp}-\Hc^{\sharp})}\Bigr)=
1-\nu\Bigl(\bigcap_{\Hc\in I}{\Hc^{\sharp}}\Bigr),
$$
the formulas~(\ref{eq-expect-collect}) and~(\ref{eq-sec-collect}) give
exactly the claimed formulas~(\ref{eq-comput})
and~(\ref{eq-secondary}).

\begin{remark}
  Note the following strange-looking ``linearity'' property, which can
  be checked from our formulas and~(\ref{eq-prod-geom}): for
  $G\not=1$, we have
\begin{gather*}
\cheb{G}=\expect(\tau_G)=\sum_{\emptyset\not=I\subset \max{G}}{
(-1)^{|I|+1}\expect(T_I)
},\\
\scheb{G}=\expect(\tau_G^2)=\sum_{\emptyset\not=I\subset \max{G}}{
(-1)^{|I|+1}\expect(T_I^2)
}.
\end{gather*}
\end{remark}

\begin{remark}
  These formulas can only be useful for practical computation if the
  number of conjugacy classes of maximal subgroups of $G$ is fairly
  small, or if they are very well understood.  As a theoretical
  instrument, they suffer from the fact that it is very hard to use
  them to guess or estimate the actual value of $\cheb{G}$. For
  instance, it is not clear how to recover even the trivial lower
  bound
\begin{equation}\label{eq-triv-lb}
\cheb{G}\geq \delta(G),
\end{equation}
where $\delta(G)$ is the minimal cardinality of a generating set of
$G$. We will give examples later on where this bound is very close to
the truth, even with groups of size growing to infinity.
\end{remark}

\begin{remark}
Another natural formula is
\begin{equation}\label{eq-formula-probas}
\cheb{G}=1+\sum_{n\geq 1}{\proba(X_1^{\sharp},\ldots,X_{n}^{\sharp}\text{
    do not generate } G)},
\end{equation}
which is also valid for $G=1$.
\par
Indeed, since $\tau_G$ takes positive integer values, we have  the
familiar formula
$$
\expect(\tau_G)=\sum_{n\geq 1}{\proba(\tau_G\geq n)},
$$
and clearly
$$
\{\tau_G\geq n\} = \{X_1^{\sharp},\ldots,X_{n-1}^{\sharp}\text{ do not
  generate } G\},
$$
for $n\geq 1$. When $n=1$, this is the certain event, with probability
one, thus leading to~(\ref{eq-formula-probas}).
\par
A moment's thought shows that one can also identify this formula with
the one coming from~(\ref{eq-comput}) by expanding the geometric
series: 
\begin{align*}
\cheb{G}=\sum_{\stacksum{I\subset \max(G)}{I\not=\emptyset}}{
\frac{(-1)^{|I|+1}}{
1-\nu(\bigcap_{\Hc\in I}{\Hc^{\sharp}})}}
&=
\sum_{\stacksum{I\subset \max(G)}{I\not=\emptyset}}{
(-1)^{|I|+1}\sum_{n\geq 0}{\nu(\bigcap_{\Hc\in I}{\Hc^{\sharp}})^n}}\\
&=1+\sum_{n\geq 1}{\Bigl(
\sum_{\stacksum{I\subset \max(G)}{I\not=\emptyset}}{
(-1)^{|I|+1}\nu(\bigcap_{\Hc\in I}{\Hc^{\sharp}})^n}
\Bigr)},
\end{align*}
(where the term with $n=0$ is only equal to $1$ for
$\max(G)\not=\emptyset$, i.e., $G\not=1$).
\par
This is not really a different proof of Proposition~\ref{pr-comput}
since the relation
$$
\proba(X_1^{\sharp},\ldots,X_{n}^{\sharp}\text{
do not generate } G)=
\sum_{\stacksum{I\subset \max(G)}{I\not=\emptyset}}{
(-1)^{|I|+1}\nu(\bigcap_{\Hc\in I}{\Hc^{\sharp}})^n}
$$
is proved by inclusion-exclusion, exactly as in the proof of
Proposition~\ref{pr-collect}. 
\par
The point of~(\ref{eq-formula-probas}) is rather that it leads to
another lower bound (for $G\not=1$):
$$
\cheb{G}\geq 1+\sum_{n\leq
  k}{\proba(X_1^{\sharp},\ldots,X_{n}^{\sharp}\text{ do not generate }
  G)}
$$
for any fixed $k\geq 1$, and this may be quite useful because there
has been a large amount of work on the estimation of those
probabilities when $k$ is small, e.g., $k=2$ if $G$ is not cyclic. For
instance, Dixon (for $G=A_n$ and $n\ra +\infty$) and Kantor and
Lubotzky (for $G=\mathbf{G}(\Fp_q)$ a simple classical group and $q\ra
+\infty$) have shown that in those cases we have
$$
\proba(X_1,X_2\text{ do not generate } G)\ra 0
$$
as $n$ (resp. $q$) goes to infinity, indeed with quantitative
estimates (see~\cite{kantor-lubotzky}) -- but note the probabilities
with elements and with conjugacy classes may behave rather differently
(e.g., for $G=PSL(2,\Fp_p)$, the probability that two random elements
generate $G$ is very close to $1$ for large $p$, but there is a
probability converging to $1/2$ that two random conjugacy classes do
not generate $G$, see the proof of Theorem~\ref{th-psl}). In a
sense, the Chebotarev invariant is thus a refinement of these type of
probabilities. We refer to~\cite{dixon2} for a brief survey of
probabilistic Galois theory, and to~\cite{dixon} for the analysis of
the case of symmetric groups.
\end{remark}

\begin{remark}\label{rm-maximal}
As explained in~\cite[Th. 5]{serre-jordan}, we have
\begin{equation}\label{eq-maximal}
\nu(\Hc^{\sharp})\leq 1-\frac{1}{|G/H|}
\end{equation}
for any conjugacy class of maximal subgroup of $G$ (this is due to
Cameron and Cohen).
\end{remark}

We now prove the lemma which supplies the formula~(\ref{eq-prod-geom})
used in the proof of the proposition.

\begin{lemma}
\label{lm-prod-geom}
With notation as in Proposition~\ref{pr-collect}, in particular with
independent identically distributed random variables $(Z_n)$, for any
two non-empty subsets $I$, $J$ in $\mathcal{E}$, we have
\begin{equation}\label{eq-prod-geom}
\expect(T_IT_J)=\frac{1}{\mu(E_{I\cup J})}\Bigl(
\frac{1}{\mu(E_I)}+\frac{1}{\mu(E_J)}-1\Bigr).
\end{equation}
\end{lemma}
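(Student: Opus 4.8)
The plan is to compute $\expect(T_IT_J)$ directly from the joint tail probabilities of the pair $(T_I,T_J)$, using only the independence of the $Z_n$. Throughout, set $p=\mu(E_I)$, $q=\mu(E_J)$ and $r=\mu(E_{I\cup J})=\mu(E_I\cup E_J)$ (recall $E_{I\cup J}=E_I\cup E_J$ from~(\ref{eq-ei})). These are all positive, since under the standing hypothesis of Proposition~\ref{pr-collect} the waiting time $T_K$ is finite almost surely for every non-empty $K\subset\mathcal{E}$, i.e.\ each $T_K$ is a genuine geometric variable; in particular all moments are finite by~(\ref{eq-expect-geom}), so $\expect(T_IT_J)<+\infty$ by Cauchy--Schwarz and the double series below converge absolutely and may be rearranged at will.

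First I would record the elementary identity
$$
\expect(T_IT_J)=\sum_{j\geq 1}\sum_{k\geq 1}\proba(T_I\geq j,\ T_J\geq k),
$$
valid for any pair of $\Nn$-valued random variables because $T_IT_J=\sum_{j,k\geq 1}\charfun(j\leq T_I)\charfun(k\leq T_J)$ with all terms non-negative. The heart of the matter is then the joint tail $\proba(T_I\geq j,\ T_J\geq k)$. By definition of $T_I$ and $T_J$, the event $\{T_I\geq j,\ T_J\geq k\}$ asks that none of $Z_1,\ldots,Z_{j-1}$ lie in $E_I$ and none of $Z_1,\ldots,Z_{k-1}$ lie in $E_J$; combining these index by index and using $E_I\cup E_J=E_{I\cup J}$, when $j\leq k$ this event is exactly
$$
\{Z_1,\ldots,Z_{j-1}\notin E_{I\cup J}\}\cap\{Z_j,\ldots,Z_{k-1}\notin E_J\},
$$
of probability $(1-r)^{j-1}(1-q)^{k-j}$ by independence and identical distribution; symmetrically, when $j>k$ it has probability $(1-r)^{k-1}(1-p)^{j-k}$.

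It then remains to sum the two geometric double series. Writing $k=j+\ell$ with $\ell\geq 0$ for the range $j\leq k$ and $j=k+\ell$ with $\ell\geq 1$ for the range $j>k$,
$$
\expect(T_IT_J)=\Bigl(\sum_{j\geq 1}(1-r)^{j-1}\Bigr)\Bigl(\sum_{\ell\geq 0}(1-q)^{\ell}\Bigr)+\Bigl(\sum_{k\geq 1}(1-r)^{k-1}\Bigr)\Bigl(\sum_{\ell\geq 1}(1-p)^{\ell}\Bigr)=\frac{1}{rq}+\frac{1-p}{rp},
$$
which rearranges to $\frac1r\bigl(\frac1p+\frac1q-1\bigr)$, that is, precisely~(\ref{eq-prod-geom}). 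The computation is short; the only point that needs care is the bookkeeping in the previous paragraph, where the two prefix conditions $\{Z_1,\ldots,Z_{j-1}\notin E_I\}$ and $\{Z_1,\ldots,Z_{k-1}\notin E_J\}$ overlap and must be intersected coordinatewise rather than treated as independent. An alternative route I would be equally happy to take is a first-step (renewal) analysis: conditioning on which of $E_I\cap E_J$, $E_I\setminus E_J$, $E_J\setminus E_I$, $D\setminus E_{I\cup J}$ contains $Z_1$, and using that on the last event $(T_I-1,\,T_J-1)$ has the same joint law as $(T_I,T_J)$, produces one linear equation for $\expect(T_IT_J)$ whose solution is again~(\ref{eq-prod-geom}).
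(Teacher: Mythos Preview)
Your proof is correct and takes a genuinely different route from the paper's. The paper computes the joint \emph{point} probabilities $\proba(T_I=n,\ T_J=m)$ (three cases, according to the sign of $n-m$) and then evaluates the double sum $\sum_{n,m}nm\,\proba(T_I=n,\ T_J=m)$ by reducing to the auxiliary series $\varphi_i(x)=\sum_{n\geq 1}n^i(1-x)^{n-1}$ for $i=0,1,2$. You instead use the \emph{tail} identity $\expect(T_IT_J)=\sum_{j,k\geq 1}\proba(T_I\geq j,\ T_J\geq k)$, which absorbs the weights $nm$ into the indices of summation and leaves only pure geometric series to sum. Your bookkeeping of the overlap of the two prefix conditions is accurate, and the final arithmetic is correct. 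The payoff of your approach is brevity: two geometric double sums in place of the paper's three case-split sums and the $\varphi_i$ machinery. The paper's approach, on the other hand, makes the full joint law of $(T_I,T_J)$ explicit, which could be reused if one wanted higher mixed moments or other functionals; your tail method would need a fresh identity for each such variant. Your closing remark about the first-step (renewal) argument is also a valid third route.
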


\begin{proof}
  This is a fairly direct computation, but not very enlightening (at
  least in our presentation; there might be other approaches that
  makes this more transparent).
\par
We first compute the joint distribution of $T_I$ and $T_J$, and
for this, we use the shorthand notation
\begin{gather*}
  p=\mu(E_I),\quad\quad q=\mu(E_J),\quad\quad r=\mu(E_{I\cap
    J}),\quad\quad
  s=\mu(E_{I\cup J}),\\
  p'=\mu(E_I-E_J),\quad\quad q'=\mu(E_J-E_I).
\end{gather*}
\par
Then we have (generalizing the geometric distribution of a single
$T_I$): 
\begin{equation}\label{eq-joint-law}
\proba(T_I=n\text{ and } T_J=m)=
\begin{cases}
(1-s)^{n-1}r,&\text{ if } n=m\geq 1,\\
(1-s)^{m-1}q'(1-p)^{n-m-1}p,&\text{ if } n>m\geq 1,\\
(1-s)^{n-1}p'(1-q)^{m-n-1}q,&\text{ if } m>n\geq 1.
\end{cases}
\end{equation}
\par
To justify, e.g., the second of these, note that $T_I=n>m=T_J$
means that $Z_k$ must not be in $E_{I\cup J}$ for $k\leq m-1$, $Z_m$
must be in $E_J$ but not in $E_I$, $Z_k$ must not be in $E_I$ for
$m<k<n$, and finally $Z_n$ must be in $E_I$; then the independence of
the $(Z_n)$ gives the formula.
\par
Now we write
$$
\expect(T_IT_J)=\dblsum_{n, m\geq 1}{nm\proba(T_I=n\text{ and }
  T_J=m)}, 
$$
and we split the sum according to the three cases, say
$$
\expect(T_IT_J)=Q_1+Q_2+Q_3.
$$
\par
Introducing further the functions
$$
\varphi_{i}(x)=\sum_{n\geq 1}{n^i(1-x)^{n-1}},
$$
we have the expressions
$$
Q_1=\sum_{n\geq 1}{n^2\proba(T_I=T_J=n)}=
r\sum_{n\geq 1}{n^2(1-s)^{n-1}}=r\varphi_2(s),
$$
and 
\begin{align*}
Q_2&=\sum_{1\leq m<n}{nm\proba(T_I=n\text{ and }T_J=m)}\\
&=q'p\sum_{1\leq m<n}{nm(1-s)^{m-1}(1-p)^{n-m-1}}\\
&=q'p\sum_{m\geq 1}{m(1-s)^{m-1}\sum_{k\geq 1}{(m+k)(1-p)^{k-1}}}\\
&=q'p(\varphi_0(p)\varphi_2(s)+\varphi_1(p)\varphi_1(s)),
\end{align*}
while $Q_3$ is given by the same expression after exchanging $p$ and
$q$, $p'$ and $q'$.
\par
Since, by Taylor expansion, we have
$$
\varphi_0(x)=\frac{1}{x},\quad
\varphi_1(x)=\frac{1}{x^2},\quad
\varphi_2(x)=\frac{2-x}{x^3}=\frac{2}{x^3}-\frac{1}{x^2},
$$
we obtain
$$
\expect(T_IT_J)=\frac{1}{s}\Bigl(\frac{2}{s}+
\frac{q'}{ps}+\frac{p'}{qs}-1\Bigr), 
$$
by adding the three terms. Finally, the relations
$$
q'+p=s,\quad\quad p'+q=s,
$$
lead to the simplified expression
\begin{align*}
  \expect(T_IT_J)&=\frac{1}{s}\Bigl(\frac{2}{s}+
  \frac{q'}{ps}+\frac{p'}{qs}-1\Bigr)=
  \frac{1}{s}\Bigl(\frac{2}{s}+\frac{s-p}{ps}+\frac{s-q}{qs}-1\Bigr)\\
  &=\frac{1}{s}\Bigl(\frac{2}{s}+
\frac{1}{p}-\frac{1}{s}+\frac{1}{q}-\frac{1}{s}-1\Bigr),
\end{align*}
which gives~(\ref{eq-prod-geom}).
\end{proof}

\par
\medskip
\par
We now present some easy formal properties of the Chebotarev
invariants (attached, unless stated otherwise, with a sequence of
independent uniformly distributed random variables).
% It will be seen that sometimes, the probabilistic nature
% of $\acheb{G}$ makes it behave better than its simpler-looking avatar
% $\cheb{G}$.
% \par
\par
The first lemma may be used to simplify and expand the range of groups
covered by certain computations (this is also observed by
Pomerance~\cite{pomerance}, for the case of numbers of generators
instead of conjugacy classes):

\begin{lemma}\label{lm-frattini}
  Let $G$ be a finite group, and let $\Phi(G)$ be the \emph{Frattini
    subgroup} of $G$, i.e., the intersection of all maximal subgroups
  of $G$. Then, for any normal subgroup $N\triangleleft G$ such that
  $N\subset \Phi(G)$, in particular for $N=\Phi(G)$, we have
$$
\cheb{G}=\cheb{G/N},\quad\quad
\scheb{G}=\scheb{G/N}.
$$
\end{lemma}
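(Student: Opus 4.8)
The plan is to show that the two sides of the formula \eqref{eq-comput} (respectively \eqref{eq-secondary}) for $G$ and for $G/N$ are literally the same sum, by setting up a bijection between the indexing data. The key structural fact is the standard correspondence (a theorem of the theory of the Frattini subgroup) that, since $N\subset\Phi(G)\subset H$ for every maximal subgroup $H$ of $G$, the assignment $H\mapsto H/N$ is a bijection from maximal subgroups of $G$ to maximal subgroups of $G/N$, compatible with conjugation, and hence induces a bijection $\max(G)\to\max(G/N)$, $\Hc\mapsto \Hc/N$. So the index sets $\{\,\emptyset\neq I\subset\max(G)\,\}$ and $\{\,\emptyset\neq \bar I\subset\max(G/N)\,\}$ match up, with $|I|=|\bar I|$.

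It then remains to check that for each $\Hc\in\max(G)$ the quantity $\nu(\Hc^{\sharp})$ is unchanged under the reduction, and more generally that $\nu(\Hc_I^{\sharp})=\nu((\Hc/N)_{\bar I}^{\sharp})$ for every nonempty $I$; granting this, \eqref{eq-comput} and \eqref{eq-secondary} give termwise equal sums and we are done. First I would record that if $\pi\colon G\to G/N$ is the projection and $H$ is a subgroup containing $N$, then $\pi$ maps the conjugacy class $g^{\sharp}$ onto $\pi(g)^{\sharp}$, and $g^{\sharp}\cap H\neq\emptyset$ if and only if $\pi(g)^{\sharp}\cap(H/N)\neq\emptyset$ --- the forward direction is immediate, and the reverse holds because $N\subset H$ means a preimage in $H$ of an element of $H/N$ conjugate to $\pi(g)$ can be conjugated (in $G$) into $g^{\sharp}$ after adjusting by an element of $N\subset H$; one needs here that $N$ normal and $N\subset H$ forces the fibers of $\pi|_H$ to be full $N$-cosets. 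Consequently $\pi$ induces a surjection $G^{\sharp}\to(G/N)^{\sharp}$ carrying $\Hc^{\sharp}$ onto $(\Hc/N)^{\sharp}$ and its complement onto the complement, and since all fibers of $\pi$ have size $|N|$, it carries the measure $\nu=\nu_G$ to $\nu_{G/N}$. Applying this to $\Hc_I^{\sharp}=\bigcap_{\Hc\in I}\Hc^{\sharp}$ (intersection commutes with the preimage description) gives $\nu_G(\Hc_I^{\sharp})=\nu_{G/N}((\Hc/N)_{\bar I}^{\sharp})$, as required.

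The main obstacle is the small but genuine verification in the previous paragraph that $g^{\sharp}\cap H\neq\emptyset\iff\pi(g)^{\sharp}\cap (H/N)\neq\emptyset$ for subgroups $H\supseteq N$ --- i.e. that ``meeting $H$'' is detected after reduction mod $N$ --- together with the bijectivity of $\max(G)\to\max(G/N)$; both are classical but should be stated carefully, as the whole argument hinges on them. Alternatively, and perhaps more cleanly, one can avoid the formulas entirely: use Lemma \ref{lm-link}, which expresses $\tau_{X,G}=\max_{\Hc\in\max(G)}\hat\tau_{\Hc}$ with $\hat\tau_{\Hc}=\min\{n\mid X_n^{\sharp}\notin\Hc^{\sharp}\}$, and observe that if $(X_n)$ is uniform on $G$ then $(\pi(X_n))$ is uniform on $G/N$, while the event $X_n^{\sharp}\notin\Hc^{\sharp}$ coincides with $\pi(X_n)^{\sharp}\notin(\Hc/N)^{\sharp}$; hence $\tau_{X,G}$ and $\tau_{\pi(X),G/N}$ are equal as random variables, so all their moments agree, giving both displayed equalities at once. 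I would present the proof via this second route, citing the bijection $\max(G)\leftrightarrow\max(G/N)$ and the ``meeting $H$ mod $N$'' equivalence as the two inputs.
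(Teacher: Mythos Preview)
Your proposal is correct, and your second route is essentially the paper's own argument. The paper pushes the streamlining one step further: rather than going through Lemma~\ref{lm-link} and the explicit bijection $\max(G)\leftrightarrow\max(G/N)$ together with the ``meeting $H$ mod $N$'' equivalence, it simply invokes the single classical Frattini fact that a subset $S\subset G$ generates $G$ if and only if $\pi(S)$ generates $G/\Phi(G)$, applied to every tuple of representatives of $(X_1^{\sharp},\ldots,X_n^{\sharp})$. This shows directly that the events $\{(X_1^{\sharp},\ldots,X_n^{\sharp})\text{ generate }G\}$ and $\{(\pi(X_1)^{\sharp},\ldots,\pi(X_n)^{\sharp})\text{ generate }G/N\}$ coincide, hence $\tau_{X,G}=\tau_{\pi(X),G/N}$ as random variables. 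Your two inputs (the bijection of maximal subgroups and the ``meeting $H$'' equivalence) are exactly what one proves to establish that Frattini generation fact, so the content is the same; the paper just quotes the packaged statement. (The paper also first reduces to $N=\Phi(G)$ via $\Phi(G/N)=\Phi(G)/N$, but your direct treatment of general $N\subset\Phi(G)$ works just as well.)
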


\begin{proof}
  Let $H=G/N$. Moreover, we have $\Phi(H)=\Phi(G)/N$ and hence
  $H/\Phi(H)\simeq G/\Phi(G)$. This means that we need only prove the
  result when $N=\Phi(G)$, the general case following by applying this
  to $H$.
\par
Let $\pi\,:\, G\ra G/\Phi(G)$ be the quotient map. If $(X_n)$ is a
sequence of independent random variables uniformly distributed on $G$,
the $Y_n=\pi(X_n)$ are independent and uniformly distributed on
$G/\Phi(G)$. Moreover, for any $n\geq 1$, the elements
$(X_1^{\sharp},\ldots,X_n^{\sharp})$ generate $G$ if and only if the
elements $(Y_1^{\sharp},\ldots, Y_n^{\sharp})$ generate $G/\Phi(G)$:
indeed, this follows from the basic fact that a subset $S\subset G$
generates $G$ if and only if $\pi(S)$ generates $G/\Phi(G)$ (this is
applied to all sets $S=\{x_1,\ldots, x_n\}$ where $x_i$ conjugate to
$X_i$). This gives the result immediately from the definition of the
waiting times.
\end{proof}
\par
We next consider products:

\begin{proposition}\label{pr-product}
  Let $G_1$, $G_2$ be finite groups such that the only subgroup
  $H\subset G_1\times G_2$ which surjects by projection to both
  factors is $H=G$. Then we have
$$
\cheb{G_1\times G_2}\leq \cheb{G_1}+\cheb{G_2}-1.
$$
\end{proposition}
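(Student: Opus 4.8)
The plan is to reduce, via Lemma~\ref{lm-link}, the waiting time for $G=G_1\times G_2$ to those for the two factors, and then to exploit the elementary inequality $\max(a,b)\leq a+b-1$, valid for positive integers $a,b$. I would first dispose of the case where $G_1$ or $G_2$ is trivial, since then $G$ is the other factor and the asserted inequality is an equality, so I may assume both $G_1$ and $G_2$ non-trivial.

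First I would determine the maximal subgroups of $G$. Let $M\subsetneq G$ be maximal and write $\pi_i\colon G\to G_i$ for the two projections. If $\pi_1(M)=G_1$ and $\pi_2(M)=G_2$, then $M$ surjects onto both factors, so the hypothesis (that the only subgroup of $G_1\times G_2$ surjecting onto both factors is $G_1\times G_2$ itself) forces $M=G$, a contradiction; hence, say, $M_1:=\pi_1(M)\subsetneq G_1$. Then $M\subseteq M_1\times G_2\subsetneq G$, so $M=M_1\times G_2$ by maximality, and $M_1$ is itself maximal in $G_1$ (any $N$ with $M_1\subseteq N\subsetneq G_1$ gives $M_1\times G_2\subseteq N\times G_2\subsetneq G$, forcing $N=M_1$). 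Conversely, $M_1\times G_2$ is maximal in $G$ whenever $M_1$ is maximal in $G_1$ — an easy check not using the hypothesis — and symmetrically for the second factor. Since conjugation in $G$ is componentwise, conjugating $M_1\times G_2$ merely replaces $M_1$ by a conjugate, and a class of the first type is never conjugate to one of the second, so at the level of conjugacy classes one gets a disjoint decomposition $\max(G)=\max(G_1)\sqcup\max(G_2)$, where $\Hc_1\in\max(G_1)$ gives the $G$-conjugacy class of $M_1\times G_2$ (for $M_1\in\Hc_1$) and $\Hc_2\in\max(G_2)$ the class of $G_1\times M_2$.

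Next I would unwind the sets $\Hc^{\sharp}$. Again because conjugation is componentwise, $G^{\sharp}=G_1^{\sharp}\times G_2^{\sharp}$, a conjugacy class of $G$ being a product $C_1\times C_2$ of classes of the factors, and such a class meets $M_1\times G_2$ (for $M_1$ a representative of $\Hc_1$) precisely when $C_1$ meets $M_1$. Hence, under the above identification, $(\Hc_1\times G_2)^{\sharp}=\Hc_1^{\sharp}\times G_2^{\sharp}$ and $(G_1\times\Hc_2)^{\sharp}=G_1^{\sharp}\times\Hc_2^{\sharp}$. Writing $X_n=(X_{n,1},X_{n,2})$ and $X^{(i)}=(X_{n,i})_{n\geq1}$, we get $X_n^{\sharp}\notin(\Hc_1\times G_2)^{\sharp}$ if and only if $X_{n,1}^{\sharp}\notin\Hc_1^{\sharp}$ (as conjugacy classes of $G_1$), so the random variable $\hat\tau_{\Hc_1\times G_2}$ of~(\ref{eq-hat}), formed from $X$, coincides with $\hat\tau_{\Hc_1}$ formed from $X^{(1)}$; symmetrically on the other side. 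Since $(X_n)$ is i.i.d.\ uniform on $G_1\times G_2$, each $X^{(i)}$ is i.i.d.\ uniform on $G_i$, so Lemma~\ref{lm-link} applied to $G_1$ and to $G_2$ identifies the maxima of these $\hat\tau$'s over $\max(G_1)$, respectively $\max(G_2)$, with $\tau_{X^{(1)},G_1}$ and $\tau_{X^{(2)},G_2}$, which are distributed as the standard waiting times of the two factors.

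Putting $\tau_i=\tau_{X^{(i)},G_i}$, Lemma~\ref{lm-link} for $G$ together with the decomposition $\max(G)=\max(G_1)\sqcup\max(G_2)$ then gives $\tau_{X,G}=\max(\tau_1,\tau_2)$ pointwise. As both $\tau_i$ take values in the positive integers, $\min(\tau_1,\tau_2)\geq1$, whence $\tau_{X,G}=\tau_1+\tau_2-\min(\tau_1,\tau_2)\leq\tau_1+\tau_2-1$; taking expectations yields $\cheb{G_1\times G_2}=\expect(\tau_{X,G})\leq\expect(\tau_1)+\expect(\tau_2)-1=\cheb{G_1}+\cheb{G_2}-1$. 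I expect the main obstacle to be the group-theoretic reduction: using the subdirect-product hypothesis to pin down the maximal subgroups as exactly the product subgroups, and then verifying that membership of a conjugacy class in $(\Hc_1\times G_2)^{\sharp}$ depends only on its first coordinate, so that the two factor waiting times genuinely decouple. Once that is in place the probabilistic estimate is immediate and does not even require independence of $\tau_1$ and $\tau_2$.
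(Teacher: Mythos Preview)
Your proof is correct, but it takes a more structural route than the paper. The paper argues directly from the definition of invariable generation: if $n\geq\max(\tau_1,\tau_2)$ and $g_i=(y_i,z_i)$ are any representatives of $X_i^{\sharp}$ for $i\leq n$, then the $y_i$ generate $G_1$ and the $z_i$ generate $G_2$, so $\langle g_1,\ldots,g_n\rangle$ surjects onto both factors and hence equals $G$ by hypothesis; this gives $\tau_G\leq\max(\tau_1,\tau_2)$ in two lines, without ever classifying $\max(G)$. You instead pin down $\max(G)$ explicitly as $\max(G_1)\sqcup\max(G_2)$ and then invoke Lemma~\ref{lm-link} on each side. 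This is more work, but it buys you something the paper does not state: the pointwise \emph{equality} $\tau_G=\max(\tau_1,\tau_2)$, not merely the inequality. That sharper identity would immediately give corresponding formulas for $\scheb{G_1\times G_2}$ and all higher moments as well, so your detour through the maximal-subgroup structure is not wasted.
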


Examples of groups $G_1$, $G_2$ satisfying the hypothesis are any pair
of non-isomorphic simple groups; note that this proposition suggests
that sometimes $\chebp{G}=\cheb{G}-1$ would be a more natural
invariant to consider, since we then have the simpler inequality
$$
\chebp{G_1\times G_2}\leq \chebp{G_1}+\chebp{G_2}.
$$

\begin{proof}
  With $G=G_1\times G_2$ and denoting $X_n=(Y_n,Z_n)\in G_1\times G_2$
  a sequence of independent uniformly distributed random variables, it
  is clear that $(Y_n)$, $(Z_n)$ are similarly independent uniformly
  distributed on $G_1$ and $G_2$ respectively. We then have the
  inequality
$$
\tau_G\leq \max(\tau_1,\tau_2)\leq \tau_1+\tau_2-1
$$
(since $\tau_i\geq 1$ and $\max(m,n)\leq n+m-1$ for integers $n$,
$m\geq 1$), with
$$
\tau_1=\min\{n\geq 1: (Y_1^{\sharp},\ldots,Y_n^{\sharp})\text{ generate }
G_1\},\quad
\tau_2=\min\{n\geq 1: (Z_1^{\sharp},\ldots,Z_n^{\sharp})\text{ generate }
G_2\}
$$
which are distributed like $\tau_{G_1}$, $\tau_{G_2}$ (indeed, if
$n\geq \max(\tau_1,\tau_2)$, then the group generated by any elements
in $X_n^{\sharp}=(Y_n^{\sharp},Z_n^{\sharp})$ surjects to $G_1$ and
$G_2$, hence it must be equal to $G$ by assumption).  Taking
expectation, we get the inequality stated.
\end{proof}

The next result gives upper and lower estimates for the Chebotarev
invariant using smaller sets of maximal subgroups than $\max(G)$. This
can be very useful in particular for the asymptotic study of
$\cheb{G_n}$ for a sequence of finite groups $(G_n)$, as we will see
later on.

\begin{proposition}\label{pr-general-estimates}
Let $G$ be a finite group, and let $M\subset   \max(G)$ be an
arbitrary non-empty finite subset of maximal subgroups. Let
$$
\tilde{\tau}_M=\max_{\Hc\in M}{\hat{\tau}_{\Hc}}.
$$
with notation as in~\emph{(\ref{eq-hat})} and
\begin{equation}\label{eq-pm}
  p_M=\nu\Bigl(G^{\sharp}-\bigcup_{\Hc\in \max(G)-M}{\Hc^{\sharp}}\Bigr).
\end{equation}
\par
We then have
$$
\expect(\tilde{\tau}_M)
=\sum_{\emptyset\not=I\subset M}{\frac{(-1)^{|I|+1}}{
1-\nu(\bigcap_{\Hc\in I}{\Hc^{\sharp}})}}
\leq \cheb{G}\leq \expect(\tilde{\tau}_M)-1+p_M^{-1}
$$
and
$$
\expect(\tilde{\tau}_M^2)\leq
\scheb{G}\leq \expect(\tilde{\tau}_M^2)+
\frac{2-p_M}{p_M^2}-1.
$$
\end{proposition}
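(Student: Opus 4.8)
Throughout, $(X_n)$ is a sequence of independent, uniformly distributed $G$-valued random variables, as in the definition of $\cheb{G}$, and $\hat\tau_\Hc$, $\tilde\tau_M$ and $\tau_G=\tau_{X,G}$ are the associated waiting times. The plan is to compare $\tilde\tau_M$ with $\tau_G$ from below by monotonicity, and from above by absorbing all maximal subgroups lying outside $M$ into a single geometric waiting time with parameter $p_M$.

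The closed formula and the two lower bounds are immediate. By Lemma~\ref{lm-link}, $\tau_G=\max_{\Hc\in\max(G)}\hat\tau_\Hc$, while $\tilde\tau_M=\max_{\Hc\in M}\hat\tau_\Hc$ is a maximum over the smaller index set $M\subset\max(G)$; hence $\tilde\tau_M\le\tau_G$ pointwise on $\Omega$, so $\expect(\tilde\tau_M)\le\expect(\tau_G)=\cheb{G}$ and $\expect(\tilde\tau_M^2)\le\scheb{G}$. For the equality, apply Proposition~\ref{pr-collect}(2) exactly as Proposition~\ref{pr-comput} was deduced from it, but with the reduced collection $\mathcal E=\{G^\sharp-\Hc^\sharp:\Hc\in M\}$ in place of the full one: the waiting time of that proposition is then $\tilde\tau_M$, and since $\nu\bigl(\bigcup_{\Hc\in I}(G^\sharp-\Hc^\sharp)\bigr)=1-\nu\bigl(\bigcap_{\Hc\in I}\Hc^\sharp\bigr)$, formula~(\ref{eq-expect-collect}) turns into the stated sum. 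The hypothesis that $T_I<+\infty$ almost surely holds because $\bigcap_{\Hc\in I}\Hc^\sharp\subset\Hc_0^\sharp$ for any $\Hc_0\in I$, and $\nu(\Hc_0^\sharp)<1$ by Remark~\ref{rm-maximal}.

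For the upper bounds, put $A=G^\sharp-\bigcup_{\Hc\in\max(G)-M}\Hc^\sharp$, so that $\nu(A)=p_M$, and let $T=\min\{n\ge1:X_n^\sharp\in A\}$. If $p_M=0$ the asserted upper bounds read $\cheb{G}\le+\infty$ and $\scheb{G}\le+\infty$ and there is nothing to prove, so assume $p_M>0$; then $T$ is a geometric random variable with parameter $p_M$. The key step is the pointwise inequality
\[
\tau_G\le\max(\tilde\tau_M,\,T).
\]
Indeed $\hat\tau_\Hc\le\tilde\tau_M$ for $\Hc\in M$ by definition, while for $\Hc\in\max(G)-M$ we have $X_T^\sharp\in A$, hence $X_T^\sharp\notin\Hc^\sharp$, so $\hat\tau_\Hc\le T$; taking the maximum over all $\Hc\in\max(G)$ and using Lemma~\ref{lm-link} gives the inequality. (When $M=\max(G)$ one has $A=G^\sharp$, $T=1$ and $p_M=1$, so this merely restates $\tau_G=\tilde\tau_M$ and the bounds collapse to $\cheb{G}=\expect(\tilde\tau_M)$, $\scheb{G}=\expect(\tilde\tau_M^2)$.) Now apply the elementary inequalities, valid for integers $a,b\ge1$,
\[
\max(a,b)\le a+b-1,\qquad \max(a,b)^2=\max(a^2,b^2)\le a^2+b^2-1
\]
(the second because $a\ge b\ge1$ forces $\max(a,b)^2=a^2\le a^2+(b^2-1)$), with $a=\tilde\tau_M$ and $b=T$: this yields $\tau_G\le\tilde\tau_M+T-1$ and $\tau_G^2\le\tilde\tau_M^2+T^2-1$ pointwise. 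Taking expectations, and using $\expect(T)=p_M^{-1}$ and $\expect(T^2)=(2-p_M)/p_M^2$ from~(\ref{eq-expect-geom}), gives the two upper bounds.

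I do not expect a substantive obstacle here. The only points that need care are bookkeeping: verifying that the reduced collection $\mathcal E$ really reproduces $\tilde\tau_M$ as its associated waiting time (so that both the closed formula and almost-sure finiteness transfer from Proposition~\ref{pr-collect}), and correctly identifying $p_M$ as the probability of the single ``catch-all'' event $\{X_n^\sharp\in A\}$ which simultaneously disqualifies every maximal subgroup outside $M$. Once the domination $\tau_G\le\max(\tilde\tau_M,T)$ is in place, everything else reduces to the two one-line integer inequalities above and the first two moments of a geometric law; the one thing not to lose is the ``$-1$'' sharpening, which is exactly what the inequalities $\max(a,b)\le a+b-1$ and $\max(a,b)^2\le a^2+b^2-1$ provide over their cruder counterparts.
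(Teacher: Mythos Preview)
Your proof is correct and follows essentially the same approach as the paper's: your $T$ is exactly the paper's auxiliary waiting time $\tau^*$, the key domination $\tau_G\le\max(\tilde\tau_M,T)$ is the same, and the passage to the bounds via $\max(a,b)\le a+b-1$ and $\max(a,b)^2\le a^2+b^2-1$ is identical. If anything, you are slightly more explicit in treating the edge cases $p_M=0$ and $M=\max(G)$ and in invoking Proposition~\ref{pr-collect} for the closed formula.
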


\begin{proof}
Define the additional waiting time
$$
\tau^*=\min\{n\geq 1
\,\mid\, X_n\notin \bigcup_{\Hc\notin M}{\Hc^{\sharp}}\}.
$$
\par
We then note the inequalities
$$
\tilde{\tau}_M\leq \tau_G\leq \max(\tilde{\tau}_M,\tau^*)\leq
\tilde{\tau}_M+\tau^*-1, 
$$
where the first inequality is obvious, while the second follows
because, for $n=\max(\tilde{\tau}_M,\tau^*)$, we know that the group
generated by $(X_1^{\sharp},\ldots, X_n^{\sharp})$ is not contained in
any subgroup in a conjugacy class of maximal subgroups $\Hc\in M$, and
that this group also contains one element which is not conjugate to
any element in a subgroup not in $M$.
\par
Now we take expectation on both sides; observing that, by
independence, $\tau^*$ is distributed like a geometric random variable
with parameter $p_m$ given by~(\ref{eq-pm}), we obtain the first
inequalities, using Proposition~\ref{pr-collect}
and~(\ref{eq-expect-geom}).
\par
Similarly, for the secondary invariant, we use the inequalities
$$
\tilde{\tau}_M^2\leq \tau_G^2\leq \max(\tilde{\tau}_M,\tau^*)^2\leq
\tilde{\tau}_M^2+(\tau^*)^2-1,
$$
and get
$$
\expect(\hat{\tau}_M^2)\leq \scheb{G}\leq
\expect(\hat{\tau}_M^2)+\expect((\tau^*)^2)-1=
\expect(\hat{\tau}_M^2)+\frac{2-p_M}{p_M^2}-1.
$$
\end{proof}

The following immediately follows:

\begin{corollary}\label{cor-asymptotic}
  Let $(G_n)$ be a sequence of non-trivial finite groups, and let
  $\nu_n$ denote the corresponding density.  For each $n\geq 1$, let
  $M_n$ be a non-empty subset of $\max(G_n)$, and assume that
%Assume that for $n\geq 1$,
%we have
%$$
%\max(G_n)=M_n\cup N_n
%$$
%where $M_n$ and $N_n$ are disjoint, $M_n\not=\emptyset$, and where
\begin{equation}\label{eq-cond-asymp}
\lim_{n\ra +\infty}{\nu_n\Bigl(\bigcup_{\Hc\in
 \max(G_n)-M_n}{\Hc^{\sharp}}\Bigr)}=0,
\end{equation}
i.e., the proportion of elements represented by a conjugacy class in
some subgroup in $M_n$ goes to zero. Then we have
% \par
% \emph{(1)} The cardinality of $M_n$ and that of $N_n$ are bounded for
% all $n\geq 1$;
% \par
% \emph{(2)} We have\footnote{\ With the convention that the maximum is
%   zero if $N_n=\emptyset$.}
% $$
% \lim_{n\ra +\infty}{\max_{H\in N_n}{\nu_n(H^{\sharp})}}=0.
% $$
% \par
% Then, with notation as in Lemma~\ref{lm-lower-bd-general}, we
% have
$$
\cheb{G_n}=\expect(\tilde{\tau}_{M_n})+o(1),\quad\quad
\text{ and }\quad\quad
\scheb{G_n}=\expect(\tilde{\tau}_{M_n}^2)+o(1),
$$
as $n\ra +\infty$, with notation as in
Proposition~\ref{pr-general-estimates}.
\end{corollary}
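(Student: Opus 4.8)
The plan is to apply Proposition~\ref{pr-general-estimates} directly with $M=M_n$ inside each group $G_n$, and then to check that the error terms it produces become negligible under hypothesis~(\ref{eq-cond-asymp}). The only thing to unwind is the quantity $p_{M_n}$ appearing there. By definition~(\ref{eq-pm}) we have
$$
p_{M_n}=\nu_n\Bigl(G_n^{\sharp}-\bigcup_{\Hc\in\max(G_n)-M_n}{\Hc^{\sharp}}\Bigr)
=1-\nu_n\Bigl(\bigcup_{\Hc\in\max(G_n)-M_n}{\Hc^{\sharp}}\Bigr),
$$
so hypothesis~(\ref{eq-cond-asymp}) says precisely that $p_{M_n}\to 1$ as $n\to+\infty$. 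In particular $p_{M_n}>0$ for $n$ large, so Proposition~\ref{pr-general-estimates} applies and gives, for all such $n$,
$$
\expect(\tilde{\tau}_{M_n})\leq \cheb{G_n}\leq \expect(\tilde{\tau}_{M_n})-1+p_{M_n}^{-1},
\qquad
\expect(\tilde{\tau}_{M_n}^2)\leq \scheb{G_n}\leq \expect(\tilde{\tau}_{M_n}^2)+\frac{2-p_{M_n}}{p_{M_n}^2}-1.
$$

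Next I would observe that both correction terms tend to $0$. For the first invariant,
$$
-1+p_{M_n}^{-1}=\frac{1-p_{M_n}}{p_{M_n}}\longrightarrow 0,
$$
since the numerator tends to $0$ and the denominator to $1$. For the secondary invariant, writing $p=p_{M_n}$ and using $2-p-p^2=(2+p)(1-p)$,
$$
\frac{2-p}{p^2}-1=\frac{2-p-p^2}{p^2}=\frac{(2+p)(1-p)}{p^2}\longrightarrow 0,
$$
again because $1-p_{M_n}\to 0$ while $2+p_{M_n}$ and $p_{M_n}^2$ stay bounded and bounded away from $0$. Feeding these two limits back into the sandwiching inequalities above yields $\cheb{G_n}=\expect(\tilde{\tau}_{M_n})+o(1)$ and $\scheb{G_n}=\expect(\tilde{\tau}_{M_n}^2)+o(1)$, which is the assertion; the first sum formula for $\expect(\tilde{\tau}_{M_n})$ in the statement is just the identity already recorded in Proposition~\ref{pr-general-estimates}.

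There is essentially no obstacle here: the corollary is a formal consequence of the two-sided estimates of Proposition~\ref{pr-general-estimates} together with the elementary limits above, and the only genuine content is the bookkeeping identity relating $p_{M_n}$ to the density appearing in~(\ref{eq-cond-asymp}). If one wanted to be careful, the only point worth a sentence is that the estimates of Proposition~\ref{pr-general-estimates} require $p_{M_n}>0$ (so that $\tau^*$ there is an honest geometric variable with finite moments), which holds for all sufficiently large $n$ and is harmless for an $o(1)$ statement.
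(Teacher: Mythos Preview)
Your proposal is correct and is precisely the argument the paper has in mind: the corollary is stated there as immediately following from Proposition~\ref{pr-general-estimates}, and you have simply written out the one-line verification that $p_{M_n}\to 1$ under~(\ref{eq-cond-asymp}) forces both correction terms $-1+p_{M_n}^{-1}$ and $(2-p_{M_n})/p_{M_n}^2-1$ to vanish.
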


The following sections will now take up the problem of computing, or
estimating, the Chebotarev invariants for various classes of groups.

\section{Abelian and nilpotent groups}\label{sec-ex1}

In this section, we look at finite \emph{abelian} and nilpotent groups
$G$. In fact, because nilpotent groups have the (characteristic)
property that $[G,G]\subset \Phi(G)$ (see, e.g.,~\cite[Th. 11.3,
(v)]{rose}), Lemma~\ref{lm-frattini} shows that  if $G$ is a nilpotent
group, we have
$$
\cheb{G}=\cheb{G/[G,G]},\quad\quad
\scheb{G}=\scheb{G/[G,G]}
$$
which are Chebotarev and secondary Chebotarev invariants of abelian
groups. This applies, in particular, to all $p$-groups.
\par
We will not use the formula from Proposition~\ref{pr-comput} (although
it is possible, as was done in a first draft, to do some computations
using it), because in abelian groups there tends to be many maximal
subgroups up to conjugacy -- since conjugacy is now trivial. Following
the work of Pomerance~\cite{pomerance}, who computed $\cheb{G}$ (with
different terminology) for any abelian group $G$, we will use another
description of the Chebotarev waiting time in the case of abelian
groups.
% On the
% other hand, in some cases at least the subgroups can be well
% described, making the computation feasible. We note that the invariant
% $\cheb{G}$ for $G$ abelian is also studied in some detail by Pomerance
% in~\cite{pomerance} (of course, with different terminology).
% \par
% One common feature in those examples will be ``combinatorial
% cancellation'' in the sum: the final value of $\cheb{G}$ is much
% smaller than the number of terms would lead to suspect.
% \par
% We start with the case of cyclic groups:

\begin{theorem}[Pomerance]\label{th-pomerance}
  Let $G$ be a finite abelian group, and for any prime number $p\mid
  |G|$, let $r_p(G)=\dim_{\Fp_p}(G/pG)$ be the $p$-rank of $G$. Let
  $\delta(G)=\max r_p(G)$ be the minimal cardinality of a generating
  set of $G$. Then we have
$$
\cheb{G}=\delta(G)+\sum_{j\geq 1}{
\Bigl(
1-\prod_{p\mid |G|}{\prod_{1\leq i\leq r_p(G)}{
(1-p^{-(\delta(G)+j-i)})}}
\Bigr)
}.
$$
\par
In particular, for $G=\Zz/n\Zz$ with $n\geq 2$, we have
\begin{equation}\label{eq-cheb-cyclic}
\cheb{G}=-\sum_{\stacksum{d\mid n}{d\not=1}}{
\frac{\mu(d)}{1-d^{-1}}}
\end{equation}
and for $G=\Fp_p^k$, where $\Fp_p=\Zz/p\Zz$, with $p$ prime and $k\geq
1$, we have
\begin{equation}\label{eq-cheb-vs}
\cheb{G}=k+\sum_{1\leq j\leq k}{\frac{1}{p^j-1}}.
\end{equation}
\end{theorem}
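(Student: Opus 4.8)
The plan is to strip off the Frattini subgroup, recognise the waiting time as a maximum of independent per-prime waiting times, and then evaluate its expectation by a tail-sum argument fed by the classical count of spanning tuples of vectors over $\Fp_p$. Concretely: since $G$ is abelian, conjugacy is trivial, $X_n^{\sharp}=X_n$, and $\tau_{X,G}$ is the first $n$ with $X_1,\dots,X_n$ generating $G$. By Lemma~\ref{lm-frattini} we may replace $G$ by $\bar G=G/\Phi(G)$, and for abelian $G$ the primary decomposition together with $\Phi(G_p)=pG_p$ on each $p$-part identifies $\bar G$ with $\prod_{p\mid|G|}\Fp_p^{r_p(G)}$. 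Because for a finite abelian group the primary decomposition is compatible with subgroups, a subset $S\subset\bar G$ generates $\bar G$ if and only if, for each $p\mid|G|$, its image in $\Fp_p^{r_p(G)}$ spans that space over $\Fp_p$. Applying this to the sets of elements conjugate to $X_1,\dots,X_n$ (here just $\{X_1,\dots,X_n\}$), and using that the $p$-components of a uniform element of $\bar G$ are independent and uniform on the factors, one finds that $\tau_{X,G}$ has the law of $\max_{p\mid|G|}\tau_p$, where $\tau_p=\min\{n\geq1: v^{(p)}_1,\dots,v^{(p)}_n\text{ span }\Fp_p^{r_p(G)}\}$, the $v^{(p)}_i$ are i.i.d.\ uniform in $\Fp_p^{r_p(G)}$, and the families for distinct $p$ are mutually independent.

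Then I would compute $\cheb{G}=\expect(\max_{p}\tau_p)$ from $\expect(T)=\sum_{m\geq0}\proba(T>m)$ and independence of the $\tau_p$, obtaining $\cheb{G}=\sum_{m\geq0}\bigl(1-\prod_{p\mid|G|}\proba(\tau_p\leq m)\bigr)$. The one computational input is the classical identity $\proba(\tau_p\leq m)=\prod_{i=0}^{r-1}(1-p^{\,i-m})$ with $r=r_p(G)$, i.e.\ the probability that $m$ uniform vectors span $\Fp_p^{r}$, which one can see from the fact that spanning $m$-tuples correspond to surjections $\Fp_p^{m}\twoheadrightarrow\Fp_p^{r}$, of which there are $\prod_{i=0}^{r-1}(p^{m}-p^{i})$. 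This quantity vanishes for $m<r$, so each of the $\delta(G)$ terms with $m<\delta(G)$ contributes exactly $1$, producing the leading term $\delta(G)$; for $m\geq\delta(G)$ I put $m=\delta(G)+j-1$ with $j\geq1$ and reindex $i\mapsto i+1$ in the inner product, turning the exponent $m-i$ into $\delta(G)+j-i$, at which point the displayed formula falls out.

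It then remains to read off the two special cases. For $G=\Zz/n\Zz$ one has $r_p(G)=1$ for all $p\mid n$ and $\delta(G)=1$, so the formula becomes $\cheb{G}=1+\sum_{j\geq1}\bigl(1-\prod_{p\mid n}(1-p^{-j})\bigr)$; expanding $\prod_{p\mid n}(1-p^{-j})=\sum_{d\mid n}\mu(d)d^{-j}$, interchanging the sums over $j$ and $d$, summing the geometric series in $j$, and using $\sum_{d\mid n}\mu(d)=0$ for $n\geq2$ together with $d/(d-1)=1+1/(d-1)$ transforms this into $-\sum_{d\mid n,\,d\neq1}\mu(d)/(1-d^{-1})$. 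For $G=\Fp_p^{k}$ one may either run the same manipulation on the general formula, or argue more directly that $\tau_{X,G}=\sigma_1+\dots+\sigma_k$ is a sum of \emph{independent} geometric variables, where $\sigma_i$ counts the vectors drawn while the span has dimension $i-1$ and hence has parameter $1-p^{-(k-i+1)}$, so that $\cheb{G}=\sum_{i=1}^{k}(1-p^{-(k-i+1)})^{-1}=k+\sum_{1\leq j\leq k}(p^{j}-1)^{-1}$.

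The hard part is not conceptual but organisational: one must justify carefully that generation in $G/\Phi(G)$ is decided prime by prime and that the per-prime waiting times are genuinely independent, invoke the spanning-probability count, and carry out the index shuffling so that the answer matches the stated closed form. It is worth noting that the clean ``sum of independent geometrics'' description, which disposes of a single prime (and hence of $\Fp_p^k$) at once, does \emph{not} survive the passage to $\max_{p}\tau_p$ — which is exactly why the general abelian case has to go through the tail-sum formula rather than through a direct additive computation.
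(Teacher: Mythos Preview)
Your argument is correct and is essentially Pomerance's: the paper does not reproduce a proof of the general formula but simply cites~\cite[Theorem]{pomerance}, whose method is precisely the Frattini reduction to $\prod_p \Fp_p^{r_p(G)}$, the tail-sum $\expect(\tau)=\sum_{m\geq 0}\proba(\tau>m)$, and the classical spanning probability $\proba(\tau\leq m)=\prod_p\prod_{i=1}^{r_p}(1-p^{-(m-i+1)})$; the paper makes this explicit only later, in the proof of Theorem~\ref{th-secondary-abelian}, where it also invokes your sum-of-independent-geometrics picture for $\Fp_p^k$. The one genuine point of divergence is the cyclic formula~(\ref{eq-cheb-cyclic}): the paper re-derives it directly from the inclusion--exclusion formula of Proposition~\ref{pr-comput} (maximal subgroups of $\Zz/n\Zz$ being indexed by primes $p\mid n$, subsets of them by squarefree $d\mid n$ with $\nu(\Hc_I^{\sharp})=d^{-1}$), whereas you specialise the general Pomerance formula and manipulate the resulting series via $\prod_{p\mid n}(1-p^{-j})=\sum_{d\mid n}\mu(d)d^{-j}$ and $\sum_{d\mid n}\mu(d)=0$; both routes are short, and yours has the virtue of exhibiting~(\ref{eq-cheb-cyclic}) as a genuine special case rather than an independent computation.
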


This is~\cite[Theorem]{pomerance} and immediate corollaries of it. The
formula for $G=\Zz/n\Zz$ might be easier to get directly from
Proposition~\ref{pr-comput}. Indeed, the subgroups of $\Zz/n\Zz$ are
the groups $H_d=d\Zz/n\Zz$ for $d\mid n$, with $\nu(H_d)=d^{-1}$ and
$H_d\cap H_e=H_{[d,e]}$, and the maximal subgroups among these
correspond to minimal divisors of $n$ for divisibility, i.e., to the
primes $p$ dividing $n$. Then a non-empty subset $I$ of $\max(G)$ can
be parametrized by the corresponding subset of prime divisors of $n$,
or equivalently by the squarefree divisor $d>1$ of $n$ which is the
product of those primes. In this correspondence, we have
$$
\bigcap_{H\in I}{H}=\bigcap_{p\mid d}{H_p}=H_d,\quad\quad\text{ hence
}
\quad\quad \nu\Bigl(\bigcap_{H\in I}{H}\Bigr)=\nu(H_d)=\frac{1}{d},
$$
and $(-1)^{|I|}=\mu(d)$, hence~(\ref{eq-comput}) gives the stated
formula for $\cheb{\Zz/n\Zz}$.
\par
We have similar results for the secondary Chebotarev invariant;
Pomerance mentions the possibility of computing these, but does not
give any results in his paper.

\begin{theorem}\label{th-secondary-abelian}
Let $G$ be a finite abelian group. With notation as in
Theorem~\ref{th-pomerance}, we have
$$
\scheb{G}=\delta(G)^2+\sum_{j\geq 1}{(2j+2\delta(G)-1)\Bigl(1-\prod_{p\mid |G|}{
\prod_{1\leq i\leq r_p(G)}{
(1-p^{-(\delta(G)+j-i)})}
}\Bigr)}.
$$
\par
In particular, we have
$$
\scheb{\Zz/n\Zz}=-\sum_{2\leq d\mid
  n}{\mu(d)\frac{1+d^{-1}}{(1-d^{-1})^2} }
$$
for $n\geq 1$ and
$$
\scheb{\Fp_p^k}=
\cheb{\Fp_p^k}^2+\sum_{1\leq j\leq k}{
\frac{p^j}{(p^j-1)^2}
},
$$
for $p$ prime and $k\geq 1$.
\end{theorem}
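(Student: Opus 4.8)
The plan is to reuse, essentially verbatim, the probabilistic description of the waiting time that underlies Pomerance's Theorem~\ref{th-pomerance}, and then to extract the second moment rather than the first. First I would reduce to the elementary-abelian case: by Lemma~\ref{lm-frattini} applied with $N=\Phi(G)$ — equivalently, because a subset of a finite abelian group $G$ generates $G$ if and only if its image generates $G/\Phi(G)\cong\prod_{p\mid|G|}\Fp_p^{r_p(G)}$ — I may assume $G=\prod_{p\mid|G|}\Fp_p^{r_p(G)}$, so $\delta=\delta(G)=\max_p r_p(G)$. For a sequence $(X_n)$ of independent uniform elements of $G$, the component sequences in the distinct factors $\Fp_p^{r_p(G)}$ are jointly independent, so $\tau_G=\max_{p\mid|G|}\tau_p$ with the $\tau_p$ independent, where $\tau_p$ is the first time the projections span $\Fp_p^{r_p(G)}$ (a subgroup of a product of pairwise-coprime-order factors is the product of its projections, so generating $G$ amounts to generating each factor). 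The standard count of spanning tuples of vectors gives $\proba(\tau_p\leq m)=\prod_{k=0}^{r_p(G)-1}(1-p^{k-m})$, hence
$$
\proba(\tau_G\leq m)=\prod_{p\mid|G|}\ \prod_{k=0}^{r_p(G)-1}(1-p^{k-m})=:P(m).
$$
This is $0$ for $m<\delta$ (a factor $1-p^{0}$ occurs), and for $m\geq\delta$, writing $m=\delta+j-1$ with $j\geq1$, the factors attached to a prime $p$ involve $p^{-s}$ with $s$ ranging over $\{m-r_p(G)+1,\dots,m\}=\{\delta+j-r_p(G),\dots,\delta+j-1\}$, so $P(\delta+j-1)=\prod_{p\mid|G|}\prod_{1\leq i\leq r_p(G)}(1-p^{-(\delta+j-i)})$, which is exactly the product in the statement. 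All of this is in~\cite{pomerance}; in particular $\tau_G<+\infty$ almost surely.

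Next I would compute $\scheb{G}=\expect(\tau_G^2)$ from the elementary identity $\expect(X^2)=\sum_{n\geq1}(2n-1)\proba(X\geq n)$, valid for any positive-integer-valued $X$ (it follows from $\sum_{n=1}^{k}(2n-1)=k^2$ and Fubini). This yields
$$
\scheb{G}=\sum_{m\geq0}(2m+1)\bigl(1-P(m)\bigr),
$$
the series converging absolutely since $1-P(m)$ decays geometrically in $m$ while $2m+1$ is polynomial. For $0\leq m\leq\delta-1$ the summand equals $2m+1$, contributing $\sum_{m=0}^{\delta-1}(2m+1)=\delta^2$. For $m\geq\delta$, the substitution $m=\delta+j-1$ turns $2m+1$ into $2j+2\delta-1$ and $1-P(m)$ into $1-\prod_{p\mid|G|}\prod_{1\leq i\leq r_p(G)}(1-p^{-(\delta+j-i)})$, and summing over $j\geq1$ produces precisely the asserted formula for $\scheb{G}$ (taking $G\neq1$; as with Proposition~\ref{pr-comput}, the trivial group is an exception, $\scheb{1}=1$). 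As a sanity check, running the same computation with the weight $2m+1$ replaced by $1$ reproves Pomerance's formula for $\cheb{G}$.

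For the two displayed special cases I would argue more directly. For $G=\Zz/n\Zz$ with $n\geq2$, rather than specialising the general formula I would read the answer straight off formula~(\ref{eq-secondary}) of Proposition~\ref{pr-comput}, exactly as the excerpt does for~(\ref{eq-cheb-cyclic}): the non-empty subsets $I\subset\max(\Zz/n\Zz)$ are parametrised by the squarefree divisors $d>1$ of $n$, with $\nu(\Hc_I^{\sharp})=d^{-1}$ and $(-1)^{|I|+1}=-\mu(d)$, so $\scheb{\Zz/n\Zz}=-\sum_{2\leq d\mid n}\mu(d)\frac{1+d^{-1}}{(1-d^{-1})^{2}}$ (the squarefree restriction being harmless since $\mu$ vanishes otherwise). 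For $G=\Fp_p^{k}$ there is a single prime, so $\tau_G=\tau_p$ is a sum $\tau_G=\sum_{i=1}^{k}G_i$ of independent geometric variables, $G_i$ having parameter $1-p^{-(k-i+1)}$ (the probability that a fresh random vector avoids a fixed $(i-1)$-dimensional subspace). By independence $\expect(\tau_G^2)=\expect(\tau_G)^2+\sum_{i=1}^{k}\variance(G_i)$, and $\variance(G_i)=p^{-(k-i+1)}/(1-p^{-(k-i+1)})^{2}=p^{s}/(p^{s}-1)^{2}$ after setting $s=k-i+1$; combining with~(\ref{eq-cheb-vs}) gives $\scheb{\Fp_p^{k}}=\cheb{\Fp_p^{k}}^{2}+\sum_{1\leq j\leq k}p^{j}/(p^{j}-1)^{2}$.

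I do not expect a genuine obstacle: the real content is the spanning-tuple count behind Theorem~\ref{th-pomerance}, which I may simply quote. The only points needing care are the bookkeeping that identifies $P(\delta+j-1)$ and the weights $2m+1$ with the quantities in the statement, the justification of the term-by-term summation by absolute convergence, and the separate treatment of the degenerate range $m<\delta$; the $\Fp_p^{k}$ computation via variance-additivity is included mainly as an independent consistency check on the general formula.
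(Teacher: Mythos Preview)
Your proposal is correct and follows essentially the same approach as the paper: the general formula comes from Pomerance's expression for $\proba(\tau_G\leq m)$ together with the tail-sum identity $\expect(\tau_G^2)=\sum_{m\geq0}(2m+1)(1-P(m))$, and the $\Fp_p^k$ case via the decomposition $\tau_G=\sum_j G_j$ into independent geometrics and additivity of variance. Your treatment is in fact slightly more explicit than the paper's (you spell out the Frattini reduction and handle $\Zz/n\Zz$ directly via~(\ref{eq-secondary}), which the paper leaves implicit), but there is no substantive difference.
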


\begin{proof}
The first result is obtained by reasoning as
in~\cite[p. 195]{pomerance}, with $r$ and $(r+j)$ there replaced by
$r^2$ and $(r+j)^2$. The point is that he shows that
$$
\proba((X_1,\ldots,X_{\delta(G)+j})\text{ generate } G)=
\prod_{p\mid |G|}{
\prod_{1\leq i\leq r_p(G)}{
(1-p^{-(\delta(G)-r_p(G)+j+i)})
}
}.
$$
\par
To deduce the values for $G=\Zz/p^k\Zz$, it is simpler to use the
description\footnote{\ Which is the analogue of the decomposition of
  the waiting time for the standard Coupon Collecting Problem in a sum
  of geometric random variables.}
$$
\tau_{G}=\sum_{j=1}^k{G_j}
$$
where the $G_j$ are independent geometric random variables with
parameters $p_j=1-p^{-j}$. Concretely, they can be defined as follows
\begin{gather*}
G_k=\min\{n\geq 1\,\mid\, X_n\not=0\},\\
G_{k-1}=\min\{n\geq 1\,\mid\, \dim_{\Fp_p} \langle
X_{G_k+n},X_{G_k}\rangle=2\},\quad\ldots \\
G_{1}=\min\{n\geq 1\,\mid\, \dim_{\Fp_p} \langle
X_{G_2+n},X_{G_2},\ldots,X_{G_k}\rangle=k\},
\end{gather*}
which, by independence of the $(X_n)$, are easily checked to be
indeed independent geometric variables with the stated parameters.
\par
This decomposition leads to the formula for~$\scheb{G}$ immediately,
using~(\ref{eq-expect-geom}) and additivity of the variance of
independent random variables.
\end{proof}

The formula of Pomerance gives a quick way to understand the limit
values of Chebotarev invariants for abelian groups with a given rank
$\delta(G)$. 

\begin{corollary}[Pomerance]\label{cor-pomerance}
  For any fixed integer $k\geq 1$, and any abelian finite group $G$
  with $\delta(G)=k$, we have
$$
k\leq \cheb{G}\leq \limsup_{\stacksum{|G|\ra
    +\infty}{\delta(G)=k}}{\cheb{G}}= k+1+\sum_{j\geq
  1}{\Bigl(1-\prod_{1\leq j\leq k}{\zeta(j+k)^{-1}}\Bigr)}.
$$
\par
In particular, the Chebotarev invariants for cyclic groups are bounded.
\end{corollary}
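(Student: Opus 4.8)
The plan is to read everything off Pomerance's closed formula (Theorem~\ref{th-pomerance}),
$$
\cheb{G}=\delta(G)+\sum_{j\geq 1}\Bigl(1-\prod_{p\mid |G|}\ \prod_{1\leq i\leq r_p(G)}\bigl(1-p^{-(\delta(G)+j-i)}\bigr)\Bigr),
$$
observing that when $\delta(G)=k$ every exponent appearing here satisfies $\delta(G)+j-i\geq k+1-k=1$, so each factor lies in $(0,1)$. The lower bound $\cheb{G}\geq k$ is then immediate, every summand being $\geq 0$ (and is in any case just the trivial bound~\eqref{eq-triv-lb}). For the asserted value of the $\limsup$ I would prove two things: (i) a \emph{uniform} upper bound $\cheb{G}\leq L$ for \emph{every} finite abelian $G$ with $\delta(G)=k$, where $L$ denotes the claimed right-hand side; and (ii) the existence of a sequence of such groups of order tending to infinity along which $\cheb{G}\to L$.

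For (i), fix $j\geq 1$ and estimate the $j$-th summand by discarding information: since all factors lie in $(0,1)$, extending the outer product from the primes dividing $|G|$ to all primes and each inner product up to $i=k$ can only shrink the product, hence increase the summand, so that
$$
1-\prod_{p\mid|G|}\ \prod_{1\leq i\leq r_p(G)}\bigl(1-p^{-(k+j-i)}\bigr)\ \leq\ 1-\prod_{p}\ \prod_{1\leq i\leq k}\bigl(1-p^{-(k+j-i)}\bigr).
$$
The exponents $k+j-i$ with $1\leq i\leq k$ run over $\{j,j+1,\dots,j+k-1\}$, so for $j\geq 2$ the right-hand product equals $\prod_{m=j}^{\,j+k-1}\prod_p(1-p^{-m})=\prod_{m=j}^{\,j+k-1}\zeta(m)^{-1}$, a positive number, while for $j=1$ the range of $m$ contains $m=1$ and $\prod_p(1-p^{-1})=0$, so that summand is $\leq 1$. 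Summing over $j$ and reindexing by $m=j'+i$ ($1\leq i\leq k$) gives $\cheb{G}\leq k+1+\sum_{j'\geq 1}\bigl(1-\prod_{1\leq i\leq k}\zeta(j'+i)^{-1}\bigr)=L$, and the series converges (so $L<\infty$) because its $j'$-th term is at most $\sum_{i=1}^{k}\bigl(1-\zeta(j'+i)^{-1}\bigr)\leq k\bigl(1-\zeta(j'+1)^{-1}\bigr)$, which tends to $0$ geometrically.

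For (ii), take the explicit groups $G_n=\bigoplus_{i=1}^{n}(\Zz/p_i\Zz)^k$, with $p_1<p_2<\cdots$ the primes in increasing order. Then $\delta(G_n)=k$, $|G_n|\to\infty$, and $r_p(G_n)=k$ for $p\in\{p_1,\dots,p_n\}$ and $0$ otherwise, so the $j$-th summand of $\cheb{G_n}$ is exactly $1-\prod_{i=1}^{n}\prod_{m=j}^{\,j+k-1}(1-p_i^{-m})$. For each fixed $j$ this is increasing in $n$ and tends to $1-\prod_{m=j}^{\,j+k-1}\zeta(m)^{-1}$, which is $1$ when $j=1$; by monotone convergence the sum over $j$ passes to the limit, so $\cheb{G_n}\to k+1+\sum_{j\geq 2}\bigl(1-\prod_{m=j}^{\,j+k-1}\zeta(m)^{-1}\bigr)=L$. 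Together with (i) this yields $\limsup\cheb{G}=L$. Finally, a cyclic group $\Zz/n\Zz$ has $r_p=1$ for all $p\mid n$, hence $\delta=1$, so (i) gives $\cheb{\Zz/n\Zz}\leq 2+\sum_{m\geq 2}\bigl(1-\zeta(m)^{-1}\bigr)$ (the $k=1$ value of $L$), a constant independent of $n$; this is the last assertion.

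The only genuinely delicate point, and the one I would treat carefully, is the degenerate term $j=1$ in Pomerance's formula: because $\sum_p p^{-1}$ diverges, $\prod_p(1-p^{-1})=0$, so this term saturates at $1$ in the limit (which is precisely the source of the ``$+1$'' in $L$). One must make sure both the upper bound in (i) and the monotone-convergence step in (ii) handle this term as an honest ``$1-\prod_p(1-p^{-1})\to 1$'' rather than trying to absorb it into the general $\zeta$-product pattern valid only for $j\geq 2$.
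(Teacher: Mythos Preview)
Your proof is correct. The paper does not actually supply a proof of this corollary: it is stated as a direct consequence of Pomerance's formula (Theorem~\ref{th-pomerance}) and attributed to~\cite{pomerance}, with no further argument. What you have written is precisely the natural derivation the paper is implicitly relying on---bound each summand uniformly by extending the products to all primes and to all $i\leq k$, then exhibit the extremal sequence $G_n=\bigl(\bigoplus_{i\leq n}\Zz/p_i\Zz\bigr)^k$ and pass to the limit by monotone convergence. Your treatment of the $j=1$ term (where the Euler product degenerates because $\prod_p(1-p^{-1})=0$, producing the ``$+1$'') is exactly the point that needs care, and you handle it correctly. Note also that your final expression for $L$ is the right one: the statement in the paper has a shadowed index (the product variable and the summation variable are both written $j$), and your reading $\prod_{1\leq i\leq k}\zeta(j+i)^{-1}$ is the one consistent with the $k=1$ value $2+\sum_{m\geq 2}(1-\zeta(m)^{-1})$ computed in the remark following Corollary~\ref{cor-concentration}.
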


\begin{corollary}\label{cor-concentration}
For any fixed $k$, we have
$$
\cheb{\Fp_p^k}=k+O(p^{-1}),\quad\quad
\scheb{\Fp_p^k}=k^2+O(p^{-1}),
$$
and 
$$
\proba(\tau_{\Fp_p^k}\not=k)
\ll p^{-1},
$$
where the implied constants depend only on $k$.
\end{corollary}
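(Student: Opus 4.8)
The plan is to read off all three assertions from the closed formulas already at our disposal: equation~\eqref{eq-cheb-vs} for $\cheb{\Fp_p^k}$, the corresponding formula of Theorem~\ref{th-secondary-abelian} for $\scheb{\Fp_p^k}$, and Pomerance's generating-probability formula recalled in the proof of Theorem~\ref{th-secondary-abelian}. For the first estimate, \eqref{eq-cheb-vs} gives $\cheb{\Fp_p^k}-k=\sum_{1\leq j\leq k}(p^j-1)^{-1}$; since $p^j-1\geq p^j/2$ for $p\geq 2$ and $j\geq 1$, each term is at most $2p^{-j}\leq 2/p$, so the whole sum is $\leq 2k/p$, and hence $\cheb{\Fp_p^k}=k+O(p^{-1})$ with an implied constant depending only on $k$ (in fact one that can be taken absolute).

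For the secondary invariant, Theorem~\ref{th-secondary-abelian} gives $\scheb{\Fp_p^k}=\cheb{\Fp_p^k}^2+\sum_{1\leq j\leq k}p^j(p^j-1)^{-2}$. By the previous step $\cheb{\Fp_p^k}^2=(k+O(p^{-1}))^2=k^2+O(p^{-1})$, while $p^j(p^j-1)^{-2}\leq 4p^{-j}\leq 4/p$, so the remaining sum is $\leq 4k/p$; hence $\scheb{\Fp_p^k}=k^2+O(p^{-1})$.

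For the probability estimate, I would first note that $\Fp_p^k$ cannot be generated by fewer than $k$ elements, since an $n$-element subset spans an $\Fp_p$-subspace of dimension at most $n$; moreover, conjugacy being trivial in an abelian group, $\tau_{\Fp_p^k}$ is simply the waiting time until $X_1,\ldots,X_n$ span $\Fp_p^k$. Hence $\tau_{\Fp_p^k}\geq k$ almost surely, and
$$
\proba(\tau_{\Fp_p^k}\ne k)=1-\proba(X_1,\ldots,X_k\text{ generate }\Fp_p^k).
$$
Specializing Pomerance's formula to $G=\Fp_p^k$ (so that $\delta(G)=r_p(G)=k$) at $j=0$ --- equivalently, by the classical count $(p^k-1)(p^k-p)\cdots(p^k-p^{k-1})/p^{k^2}$ of ordered bases of $\Fp_p^k$ --- this probability equals $\prod_{1\leq i\leq k}(1-p^{-i})$. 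Using the elementary bound $1-\prod_i(1-a_i)\leq\sum_i a_i$, valid for $a_i\in[0,1]$, we obtain $\proba(\tau_{\Fp_p^k}\ne k)\leq\sum_{1\leq i\leq k}p^{-i}\leq(p-1)^{-1}\ll p^{-1}$.

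The argument is entirely routine, so there is no real obstacle; the only steps needing a touch of care are the invocation of Pomerance's probability formula at the boundary value $j=0$ (which is in any case elementary to verify directly) and the remark that $\tau_{\Fp_p^k}\geq k$ holds surely, so that ``$\tau_{\Fp_p^k}\ne k$'' means exactly that $X_1,\ldots,X_k$ fail to generate $\Fp_p^k$. That the implied constants depend only on $k$ is then automatic from the explicit bounds above.
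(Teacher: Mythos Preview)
Your proof is correct. For the first two estimates you do exactly what the paper does implicitly (it regards those as immediate from the closed formulas~\eqref{eq-cheb-vs} and Theorem~\ref{th-secondary-abelian}), so there is no difference there.

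For the probability bound, however, your route is genuinely different from the paper's. The paper argues \emph{indirectly} via concentration: it uses $\tau_{\Fp_p^k}\geq k$ to see that $\{\tau_{\Fp_p^k}\neq k\}\subset\{|\tau_{\Fp_p^k}-\cheb{\Fp_p^k}|\geq 1/2\}$ for $p$ large enough, and then applies Chebyshev's inequality together with the variance formula $\variance(\tau_{\Fp_p^k})=\sum_{1\leq j\leq k}p^j(p^j-1)^{-2}=O(p^{-1})$. You instead compute the probability \emph{directly}: since $\tau_{\Fp_p^k}\geq k$ always, the event $\{\tau_{\Fp_p^k}\neq k\}$ is exactly the event that $X_1,\ldots,X_k$ fail to span, whose probability is $1-\prod_{i=1}^k(1-p^{-i})\leq\sum_{i=1}^k p^{-i}$. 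Your approach is more elementary and gives an explicit constant without a small-$p$ adjustment; the paper's approach has the virtue of illustrating the general principle that control of the first two moments yields concentration, which is thematically relevant to the whole section but less sharp here.
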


This last result shows that, for vector spaces over a finite field,
the Chebotarev invariant is strongly peaked around the average, which
is itself close to the dimension.

\begin{proof}
  Only the last inequality needs (maybe) a bit of explanation.  Since
  $\tau_{\Fp_p^k}$ takes positive integer values $\geq k$, we have
$$
|\tau_{\Fp_p^k}-k|\geq 1
$$
if $\tau_{\Fp_p^k}\not=k$.  Hence,  if $\tau_{\Fp_p^k}\not=k$, we have
$$
|\tau_{\Fp_p^k}-\cheb{\Fp_p^k}|\geq
|\tau_{\Fp_p^k}-k|-|\cheb{\Fp_p^k}-k|\geq 1-|\cheb{\Fp_p^k}-k|,
$$
and if we furthermore we have $p\geq p_0$, where $p_0$ (depending on
$k$) is chosen so that
$$
k\leq \cheb{\Fp_p^k}\leq k+1/2
$$
for all $p\geq p_0$, it follows that
$$
\{\tau_{\Fp_p^k}\not=k\}
\subset \{|\tau_{\Fp_p^k}-\cheb{\Fp_p^k}|\geq 1/2\}
$$
for such $p$, and then the Chebychev inequality gives
$$
\proba(\tau_{\Fp_p^k}\not=k)\leq 4\variance(\tau_{\Fp_p^k})\ll p^{-1}
$$
for $p\geq p_0$, where the implied constant depends on $k$. Increasing
this constant if needed (e.g., taking it to be at least $p_0$), we
can also claim that this inequality holds for $p\geq 2$.
\end{proof}

\begin{remark}
In particular, for cyclic groups, the Chebotarev invariant is at most,
and its limsup is, the constant
$$
2+\sum_{k\geq
  2}{\Bigl(1-\frac{1}{\zeta(k)}\Bigr)}=
2.705211140105367764\ldots
$$
\par
This asymptotic behavior is not without interest (and some surprise):
on the one hand, we see that $\cheb{\Zz/n\Zz}$ remains absolutely
bounded, despite the existence of cyclic groups with many subgroups,
and on the other hand, we see that it is not always close to the
minimal number of generators. 
\par
Concerning the first point, notice for instance that a ``naive''
invariant is given by
$$
\sum_{H\in \max(G)}{\nu(H)}=\sum_{p\mid n}{\frac{1}{p}},
$$
and this is unbounded as $n$ grows for $G=\Zz/n\Zz$ (though it is $\ll
\log\log \log n$, and thus bounded in practice...), see the discussion
surrounding~(\ref{eq-naive}) below for occurrences of such quantities
instead of the Chebotarev invariant.
\par 
For the second, note that (interpreting $1/\zeta(1)=0$) the limsup we
found is also $N$ where
$$
N=\sum_{k\geq 1}{(1-\zeta(k)^{-1})}
$$
is sometimes called the \emph{Niven constant}. Niven obtained it as
the mean-value of the maximal exponent of a prime dividing a positive
integer:
$$
N=\lim_{n\ra +\infty}{\frac{1}{n-1}\sum_{2\leq j\leq n}{\alpha(j)}}
$$
with
$$
\alpha(j)=\max\{\nu\geq 0\,\mid\, p^{\nu}\mid j\text{ for some prime }
p\}
$$
for $j\geq 2$ (see~\cite{niven}). The explanation for this coincidence
is that $\zeta(k)^{-1}$, for $k\geq 2$, is both the (asymptotic)
density of primitive vectors in $\Zz^k$ and that of $k$-power-free
integers.
% \par
% On the other hand, the constant
% $$
% N_2=3+\sum_{k\geq 2}{(2k+1)\Bigl(1-\frac{1}{\zeta(k)}\Bigr)}
% $$
% (which occurs in the limsup for the secondary Chebotarev invariant) does
% not seem to have been considered before (experimentally, at least, it
% is not equal to the second moment of $\alpha(n)$, which seems to be
% about 
% one finds, for instance, that
% $$
% \frac{1}{n-1}\sum_{2\leq j\leq n}{\alpha(j)^2}
% =4.3012966830129668301296683012966830130\ldots
% $$
% for $n=10^8$. 
\end{remark}

\begin{remark}
If one uses Proposition~\ref{pr-comput} instead, one can prove (after
some computation) the following formulas
\begin{gather*}
\cheb{\Fp_p^k}=
\sum_{1\leq j\leq k}{\frac{(-1)^{j+1}}{1-p^{-j}}\binom{k}{j}_pp^{j(j-1)/2}},\\
\scheb{\Fp_p^k}=\sum_{1\leq j\leq k}{(-1)^j
\frac{1+p^{-j}}{(1-p^{-j})^2}
\binom{k}{j}_pp^{j(j-1)/2}
},
\end{gather*}
where 
$$
\binom{k}{j}_p=\frac{(1-p^{k})\cdots (1-p^{k-j+1})}{(1-p^j)\cdots
(1-p)}
$$
are the $p$-binomial coefficients. Note that those formulas do not
immediately reveal the limiting behavior as $p\ra +\infty$, since the
summands have different degrees as rational functions of $p$.
%%TODO: try to get the one from the other directly.
\end{remark}

\section{A solvable example}\label{sec-solvable}

The results of the previous section, as well as those we will see in
the next one, reveal (or suggest) rather small values of the
Chebotarev invariants, in comparison with the size of the groups. The
following example in the solvable case exhibits very different
behavior (possibly the worse possible).

\begin{proposition}\label{pr-axplusb}
For $q$ a power of a prime, let
$$
H_{q}=\Bigl\{
\begin{pmatrix}
a&t\\0&1\end{pmatrix}\,\mid\, a\in\Fp_q^{\times},
\quad t\in\Fp_q
\Bigr\}
$$
be the group of translations and dilations of the affine plane
$\Fp_q^2$ of order $q(q-1)$, isomorphic to a semi-direct product
$\Fp_q\rtimes \Fp_q^{\times}$.
\par
\emph{(1)} We have
\begin{gather}\label{eq-caxpb}
\cheb{H_q}=
q-q^{-1}\sum_{1\not=d\mid
  q-1}{\frac{\mu(d)}{(1-d^{-1})(1-d^{-1}+q^{-1})}}
\\
\scheb{H_q}=
q(2q-1)+
\scheb{\Zz/(q-1)\Zz}
+\sum_{1\not=d\mid q-1}{\mu(d)\frac{1+d^{-1}-q^{-1}}{
(1-d^{-1}+q^{-1})^2}}.
\label{eq-saxpb}
\end{gather}
\par
\emph{(2)} For $q\geq 2$, we have
\begin{equation}\label{eq-asymp-axpb}
\cheb{H_q}=q+O(\tau(q-1)),\quad\quad
\scheb{H_q}= q(2q-1)+O(\tau(q-1)),
\end{equation}
where $\tau(q-1)$ is the number of positive divisors of $q-1$. In
particular, $\cheb{H_q}\sim q$ as $q\ra +\infty$.
\end{proposition}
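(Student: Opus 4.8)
The plan is to apply Proposition~\ref{pr-comput} directly, which forces me first to pin down the conjugacy classes of $H=H_q$ and the conjugacy classes of its maximal subgroups. Writing an element as a pair $(a,t)$ with $a\in\Fp_q^\times$, $t\in\Fp_q$, one checks that conjugation by $(a',0)$ sends $(a,t)\mapsto(a,a't)$ and conjugation by $(1,s)$ sends $(a,t)\mapsto(a,t+s(1-a))$; hence the conjugacy classes are $\{(1,0)\}$, the single class $\{(1,t):t\neq 0\}$ of size $q-1$, and for each $a\neq 1$ one class $\{(a,t):t\in\Fp_q\}$ of size $q$. For the maximal subgroups: the translation subgroup $N=\{(1,t)\}$ is normal with cyclic quotient $H/N\cong\Zz/(q-1)\Zz$, so every maximal subgroup containing $N$ is the preimage $M_p$ of the unique index-$p$ subgroup of $H/N$ for some prime $p\mid q-1$, and $M_p$ is normal with $\nu(M_p)=1/p$. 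If a maximal subgroup $M$ does not contain $N$, then $MN=H$ and $M\cap N$ is normal in $H$; since $\Fp_q^\times$ acts on $N\cong\Fp_q$ by scalars, the only $H$-submodules of $N$ are $0$ and $N$, so $M\cap N=0$ and $M$ is a complement of $N$. Because $\gcd(|N|,|H/N|)=\gcd(q,q-1)=1$, all complements are conjugate (Schur--Zassenhaus), and they are exactly the point stabilizers $\Fp_q^\times=\{(a,0)\}$ and their conjugates for the action of $H$ on the affine line $\Fp_q$; this action is $2$-transitive, hence primitive, so these stabilizers are maximal. Thus $\max(H)=\{\Hc_0\}\cup\{\,\{M_p\}:p\mid q-1\ \text{prime}\,\}$, where $\Hc_0$ is the single class of point stabilizers.

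Next I would compute $\nu(\Hc_I^\sharp)$ for every non-empty $I\subset\max(H)$. No non-trivial translation fixes a point, so $\Hc_0^\sharp$ omits only the class $\{(1,t):t\neq 0\}$ and $\nu(\Hc_0^\sharp)=1-q^{-1}$; each $M_p^\sharp$ consists of $\{(1,0)\}$, the translation class, and the classes of those $(a,\cdot)$ with $a$ a $p$-th power in the cyclic group $\Fp_q^\times$, so $\nu(M_p^\sharp)=1/p$. Intersecting over a set of primes with squarefree product $d\mid q-1$: if $\Hc_0\notin I$, then $\Hc_I^\sharp$ keeps exactly the classes whose $a$-part lies in the index-$d$ subgroup of $\Fp_q^\times$, whence $\nu(\Hc_I^\sharp)=1/d$; if $\Hc_0\in I$ (allowing $d=1$), the translation class is removed and a short count gives $\nu(\Hc_I^\sharp)=1/d-1/q$. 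Now I substitute into~(\ref{eq-comput}) and split the sum according to whether $\Hc_0\in I$, using $(-1)^{|I|}=\mu(d)$ and the vanishing of $\mu$ off squarefree integers. The terms with $\Hc_0\notin I$ sum to $-\sum_{1\neq d\mid q-1}\mu(d)/(1-d^{-1})$, which is $\cheb{\Zz/(q-1)\Zz}$ by~(\ref{eq-cheb-cyclic}); the terms with $\Hc_0\in I$ give $q$ (the $d=1$ term) plus $\sum_{1\neq d\mid q-1}\mu(d)/(1-d^{-1}+q^{-1})$. Adding the two pieces and collapsing through the identity $\frac{1}{1-d^{-1}+q^{-1}}-\frac{1}{1-d^{-1}}=\frac{-q^{-1}}{(1-d^{-1})(1-d^{-1}+q^{-1})}$ gives exactly~(\ref{eq-caxpb}). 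The same bookkeeping with~(\ref{eq-secondary}) in place of~(\ref{eq-comput}) yields~(\ref{eq-saxpb}): now the $d=1$ term of the $\Hc_0\in I$ part contributes $q(2q-1)=(2-q^{-1})/(q^{-1})^2$ and the $\Hc_0\notin I$ part assembles, via Theorem~\ref{th-secondary-abelian}, into $\scheb{\Zz/(q-1)\Zz}$. (As a sanity check, $\tau_{H_q}=\max\bigl(\hat\tau_{\Hc_0},\,\tau_{\Zz/(q-1)\Zz}\bigr)$ with $\hat\tau_{\Hc_0}$ geometric of parameter $q^{-1}$, which accounts for the shape of the answer.)

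For part (2) I would merely estimate the correction terms in the formulas just obtained. For every squarefree $d\geq 2$ dividing $q-1$ we have $1-d^{-1}\geq\frac12$ and $1-d^{-1}+q^{-1}\geq\frac12$, the numerators are bounded by an absolute constant, $|\mu(d)|\leq 1$, and there are at most $\tau(q-1)$ such $d$; hence the sum in~(\ref{eq-caxpb}) is $O(\tau(q-1))$ even after dividing by $q$, giving $\cheb{H_q}=q+O(\tau(q-1)/q)=q+O(\tau(q-1))$, and likewise $|\scheb{\Zz/(q-1)\Zz}|\ll\tau(q-1)$ together with the analogous bound on the remaining sum gives $\scheb{H_q}=q(2q-1)+O(\tau(q-1))$, with absolute implied constants. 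Since $\tau(q-1)\leq 2\sqrt{q-1}=o(q)$, the first estimate also yields $\cheb{H_q}\sim q$.

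I expect the only real difficulty to lie in the determination of $\max(H_q)$: one must observe that $H_q$ acts $2$-transitively on the affine line (so point stabilizers are maximal) and rule out any further maximal subgroups, which rests on the coprimality $\gcd(q,q-1)=1$ (Schur--Zassenhaus, plus conjugacy of complements) and on the scalar action of $\Fp_q^\times$ on $\Fp_q$ having no proper non-zero invariant subgroup. Once $\max(H_q)$ and the densities $\nu(\Hc_I^\sharp)$ are in hand, the rest is matching M\"obius sums against the known formulas for $\Zz/(q-1)\Zz$, and the asymptotics follow from crude bounds. (In the degenerate case $q=2$, where $H_2\cong\Zz/2\Zz$, consistency with~(\ref{eq-caxpb})--(\ref{eq-saxpb}) requires reading $\scheb{\Zz/1\Zz}$ as the value $0$ of the formula in Theorem~\ref{th-secondary-abelian} rather than via the trivial-group convention $\scheb{1}=1$.)
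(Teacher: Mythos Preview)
Your proposal is correct and follows essentially the same route as the paper: classify $\max(H_q)$, compute the densities $\nu(\Hc_I^\sharp)$ by splitting on whether the point-stabilizer class $\Hc_0$ lies in $I$, and plug into Proposition~\ref{pr-comput}. Two minor differences are worth noting: your classification of maximal subgroups via Schur--Zassenhaus and $2$-transitivity is more conceptual than the paper's hands-on argument (Lemma~\ref{lm-max-hq}), and your asymptotic estimate in part~(2)---bounding each term by an absolute constant and counting at most $\tau(q-1)$ terms---is cleaner and more direct than the paper's decomposition $\cheb{H_q}=q+\cheb{\Zz/(q-1)\Zz}-\Delta(q)$ followed by a series bound on $\Delta(q)$.
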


Since we have a split exact sequence
$$
1\ra \Fp_q\ra H_q\fleche{\det}\Fp_q^{\times}\ra 1
$$
and the two surrounding groups are isomorphic to $\Fp_p^k$, where
$q=p^k$ with $p$ prime, and to a cyclic group $\Zz/(q-1)\Zz$, with
Chebotarev invariants tending to $k$ as $p$ gets large, and bounded,
respectively, this shows in particular that the Chebotarev invariant
can jump quite uncontrollably under extensions.
\par
The proof will use Proposition~\ref{pr-comput}. We start with a lemma
that is certainly well-known, but for which we give a proof for
completeness and lack of a suitable reference.

\begin{lemma}\label{lm-max-hq}
\emph{(1)} There are $q$ conjugacy classes in $H_q$; they are given,
with representatives of them, by
$$
g_b=\begin{pmatrix}b&0\\0&1
\end{pmatrix},\quad g_b^{\sharp}=\{g\in H_q\,\mid\, \det(g)=b\},
\quad
|g_b^{\sharp}|=q,
$$
where $b\in\Fp_q^{\times}-\{1\}$, and
\begin{gather*}
\mathrm{Id}=\begin{pmatrix}1&0\\0&1
\end{pmatrix},\quad \mathrm{Id}^{\sharp}=\{\mathrm{Id}\},\\
u=\begin{pmatrix}1&1\\0&1
\end{pmatrix},\quad u^{\sharp}=\{
g\in H_q-\{\mathrm{Id}\}\,\mid\, \det(g)=1
\},\quad |u^{\sharp}|=q-1.
\end{gather*}
\par
\emph{(2)} The conjugcay classes of maximal subgroups of $H_q$ have
representatives given by
$$
A=\Bigl\{
\begin{pmatrix}
a&0\\0&1\end{pmatrix}\,\mid\, a\in\Fp_q^{\times}
\Bigr\},
$$
and
$$
C_{\ell}=\Bigl\{\begin{pmatrix}a&t\\0&1
\end{pmatrix}
\in H_q\,\mid\, a\in (\Fp_{q}^{\times})^{\ell}\text{ and } t\in\Fp_q\Bigr\},
$$
where $\ell$ runs over the prime divisors of $q-1$. 
\end{lemma}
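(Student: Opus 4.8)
The plan is to treat part (1) by an explicit conjugation computation in $H_q$ and part (2) by exploiting the translation subgroup $N\cong\Fp_q$, which is normal with cyclic quotient $H_q/N$.

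For (1), a one-line matrix computation gives
$$
\begin{pmatrix} a & s\\ 0 & 1\end{pmatrix}\begin{pmatrix} b & t\\ 0 & 1\end{pmatrix}\begin{pmatrix} a & s\\ 0 & 1\end{pmatrix}^{-1}=\begin{pmatrix} b & at+(1-b)s\\ 0 & 1\end{pmatrix},
$$
so the determinant $b$ is a conjugacy invariant, and the conjugates of $\begin{pmatrix} b & t\\ 0 & 1\end{pmatrix}$ are exactly the $\begin{pmatrix} b & at+(1-b)s\\ 0 & 1\end{pmatrix}$ with $a\in\Fp_q^\times$, $s\in\Fp_q$. If $b\neq1$ the second entry already runs over all of $\Fp_q$, so the class is the full determinant fibre $\{g\in H_q:\det g=b\}$, of size $q$, with representative $g_b$. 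If $b=1$, the class of $\mathrm{Id}$ is $\{\mathrm{Id}\}$ and the class of any other determinant-one element is $\{\begin{pmatrix}1 & u\\ 0 & 1\end{pmatrix}:u\in\Fp_q^\times\}$, of size $q-1$, with representative $u$. This gives exactly $q$ conjugacy classes, and their sizes sum to $q(q-2)+1+(q-1)=q(q-1)=|H_q|$, so nothing has been missed.

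For (2), note that $N$ is elementary abelian of order $q$, that $H_q/N\cong\Fp_q^\times$ is cyclic of order $q-1$, and that the conjugation action of $H_q$ on $N$ is multiplication by $\Fp_q^\times$, which is transitive on $N\setminus\{1\}$; hence $N$ is a minimal normal subgroup of $H_q$. Let $M$ be a maximal subgroup. If $N\subseteq M$, then $M/N$ is maximal in the cyclic group $H_q/N$, so $M$ is the preimage of its unique subgroup of prime index $\ell\mid q-1$, that is $M=C_\ell$; these contain $N$, hence are normal, so each is its own conjugacy class. If $N\not\subseteq M$, then $M\cap N$ is normalized by $N$ (abelian) and by $M$, hence by $\langle M,N\rangle=H_q$ (using maximality), so $M\cap N=1$ by minimality of $N$; moreover $MN$ is a subgroup strictly containing $M$, so $MN=H_q$ and $|M|=q-1$. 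Thus $M$ is a complement to $N$; conversely $A$ is such a complement, and it is maximal because any subgroup strictly between $A$ and $H_q$ either meets $N$ trivially (hence equals $A$, by cardinality) or contains $N$ (hence equals $H_q$). Since $\gcd(q-1,q)=1$ and $N$ is abelian, Schur--Zassenhaus shows that all complements to $N$ are conjugate, so every such $M$ is conjugate to $A$. As $A$ does not contain $N$ while each $C_\ell$ does, together these give the asserted complete set of representatives.

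The only step with real content beyond bookkeeping is the conjugacy of all complements to $N$; I would obtain it from Schur--Zassenhaus, or equivalently from the vanishing of $H^1(\Fp_q^\times,\Fp_q)$, which holds because $\gcd(q-1,q)=1$. Everything else is forced once one has the explicit matrix description of $H_q$ and knows that $N$ is minimal normal.
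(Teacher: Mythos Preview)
Your proof is correct. Part~(1) is identical to the paper's argument: both compute the same conjugation formula and read off the classes directly.

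For Part~(2) the overall case split is the same---maximal subgroups either contain the translation subgroup or not---but the two arguments diverge in the second case. The paper stays entirely concrete: assuming $\det|_H$ is surjective, it shows by an explicit conjugation (using an element of $H$ with each possible determinant) that a single nontrivial unipotent in $H$ would drag all of $U$ into $H$, forcing $H=H_q$; hence $\det|_H$ is injective, $H$ is cyclic of order $q-1$, and a generator is diagonalizable over $\Fp_q$, so conjugate into $A$. You instead argue structurally: $N$ is minimal normal, so $M\cap N=1$ by a normalizer argument, $M$ is a complement to $N$, and Schur--Zassenhaus (or equivalently $H^1(\Fp_q^\times,\Fp_q)=0$ since $\gcd(q,q-1)=1$) gives conjugacy of all complements. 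Your route is cleaner and generalizes immediately to any Frobenius group with abelian kernel; the paper's route is more elementary in that it avoids citing any cohomological or Hall-type theorem, relying only on the explicit matrix form. Both are perfectly adequate here.
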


\begin{proof}
(1) We have the general conjugation formula
\begin{equation}\label{eq-hq-conjugate}
\begin{pmatrix}a&t\\0&1
\end{pmatrix}
\cdot
\begin{pmatrix}b&v\\0&1
\end{pmatrix}
\cdot
\begin{pmatrix}a&t\\0&1
\end{pmatrix}^{-1}=
\begin{pmatrix}b&av+t(1-b)\\0&1
\end{pmatrix}
\end{equation}
from which it immediately follows that all elements with
$\det(g)=b\not=1$ are conjugate, and gives therefore the $q-2$
conjugacy classes with representatives $g_{b}$ described. For $b=1$,
it is clear that all elements with $b=1$, $v\not=0$ form the conjugacy
class $u^{\sharp}$, and only the identity class remains.
\par
(2) Denote
$$
U=H_q\cap SL(2,\Fp_q)=
\Bigl\{\begin{pmatrix}
1&t\\0&1
\end{pmatrix}
\,\mid\, t\in\Fp_q\Bigr\}.
$$
\par
Let $H\subset H_q$ be a maximal subgroup. Let $D=\det(H)$ be the image
of the determinant restricted to $H$. Since
$$
H\subset \det{}^{-1}(D),
$$
we have either $\det{}^{-1}(D)=H_q$, i.e., $D=\Fp_q^{\times}$, or
$H=\det{}^{-1}(D)$. In this second case, the subgroup $D$ must be a
maximal subgroup of $\Fp_q^{\times}$ for $H$ to be maximal, which
implies that $H$ is of the form $C_{\ell}$ for some
$\ell$. Conversely, such a subgroup is maximal because if we add any
extra element $g$ and let $H'=\langle C_{\ell},g\rangle$, the fact
that $U\subset C_{\ell}$ implies that some
$$
\begin{pmatrix}
a'&0\\0&1
\end{pmatrix},
$$
with $a'\notin (\Fp_q^{\times})^{\ell}$, is in $H'$, and then by
maximality in $\Fp_q^{\times}$, we have $H'=H_q$. 
\par
Note that the $C_{\ell}$ are normal in $H_q$, hence also pairwise
non-conjugate. 
\par
In the first case, when $D=\Fp_q^{\times}$, i.e., when the determinant
restricted to $H$ is surjective, we claim that the determinant is also
injective on $H$: indeed, otherwise, there exists $u\in U\cap H$,
$u\not=1$, say
$$
u=\begin{pmatrix}1& t\\0&1\end{pmatrix}\quad\text{ with } \quad
t\not=0.
$$
\par
For any $a\in \Fp_q^*$, by surjectivity there exists
$\alpha(a)\in\Fp_q$ with
$$
\begin{pmatrix}a& \alpha(a)\\0&1\end{pmatrix}
$$
and by applying the relation
$$
\begin{pmatrix}a&\alpha(a)\\0&1
\end{pmatrix}
\cdot
\begin{pmatrix}1&t\\0&1
\end{pmatrix}
\cdot
\begin{pmatrix}a&\alpha(a)\\0&1
\end{pmatrix}^{-1}=
\begin{pmatrix}1&at\\0&1
\end{pmatrix}
$$
for all $a\in\Fp_q^{\times}$, we conclude that in fact $U\subset
H$. Then $|H|$ is divisible both by $q$ and by $q-1$, hence $H=H_q$,
contradicting the assumption that $H$ is a proper subgroup of
$H_q$. So, in this second case, the determinant gives an isomorphism
$H\simeq \Fp_q^{\times}$. Then a generator of $H$ is in one of the
conjugacy classes $g_b$ (it has distinct eigenvalues in
$\Fp_q^{\times}$), hence it is conjugate to an element (generator) of
$A$, and $H$ itself is therefore conjugate to $A$.
\end{proof}

\begin{proof}[Proof of Proposition~\ref{pr-axplusb}]
  First of all, in addition to the maximal subgroups $C_{\ell}$ given
  by Lemma~\ref{lm-max-hq}, there are subgroups $C_d$ for all
  squarefree divisors $d\mid q-1$, the inverse image under the
  determinant of the subgroup of order $(q-1)/d$ in the cyclic group
  $\Fp_q^{\times}$.
\par
Given a subset $I\subset \max(H_q)$, we now compute the density of
conjugacy classes in
$$
\Hc_I^{\sharp}=\bigcap_{\Hc\in I}{\Hc^{\sharp}},
$$
as follows:
\par
\noindent -- If $A\in I$, then with $I'=I-\{A\}$, and $d$ the product
of those primes $\ell$ for which $C_{\ell}\in I'$ (including $d=1$
when $I'=\emptyset$), we have
$$
\nu(\Hc_I^{\sharp})=\frac{1}{d}-q^{-1}\quad\text{ and in particular }
\nu(A^{\sharp})=1-q^{-1}.
$$
\par
Indeed, we have to find the density of those elements of $H_q$ which
are diagonalizable with eigenvalues $1$ and $a\in C_d$. These are
exactly the conjugacy classes $g_b^{\sharp}$ with $b\in C_d-\{1\}$,
and the trivial class, so
$$
\nu(\Hc_I^{\sharp})=\frac{1+((q-1)/d-1)q}{q(q-1)}
=\frac{q(q-1)/d-(q-1)}{q(q-1)}=\frac{1}{d}-\frac{1}{q}.
$$
\par
\noindent -- If $A\notin I$, then $I$ corresponds to a divisor $d\mid
q-1$, $d\not=1$, and we have
$$
\nu(\Hc_I^{\sharp})=\frac{1}{d},
$$
since we must now compute the density of elements of $H_q$ which have
$\det(g)\in C_d$, and this is
$$
\frac{q((q-1)/d-1)+1+q-1}{q(q-1)}=\frac{1}{d}.
$$
\par
Applying~(\ref{eq-comput}), and isolating the contribution of
$I=\{A\}$, leads exactly to~(\ref{eq-caxpb}) and to~(\ref{eq-saxpb}).
\par
To deduce~(\ref{eq-asymp-axpb}) for $\cheb{H_q}$, we may assume
$q=p^k$ with $p$ an odd prime, since for $q$ even, we have
$$
\cheb{H_q}=q+\cheb{\Zz/(q-1)\Zz}=q+O(1)
$$
by Corollary~\ref{cor-concentration}. So for $q$ odd, we write
$$
\cheb{H_q}=q+\cheb{\Zz/(q-1)\Zz}-\Delta(q)=q-\Delta(q)+O(1)
$$
where
$$
\Delta(q)=\sum_{1\not=d\mid q-1}{\frac{\mu(d)}{1-d^{-1}+q^{-1}}}.
$$
\par
Since $1-d^{-1}+q^{-1}\geq 1-d^{-1}>0$, we can bound this from above
by
$$
|\Delta(q)|\leq \sumb_{1\not=d\mid q-1}{\frac{1}{1-d^{-1}}},
$$
and then proceeding as in the proof of
Corollary~\ref{cor-concentration}, we obtain
\begin{align*}
|\Delta(q)|&\leq 
\sum_{k\geq 0}{\Bigr(\prod_{p\mid q-1}{(1+p^{-k})}-1\Bigr)}\\
&\leq \tau(q-1)+\frac{\psi(q-1)}{q-1}-2+\sum_{k\geq 2}{
\Bigl(\frac{\zeta(k)}{\zeta(2k)}-1\Bigr)
}\\
&=O(\tau(q-1))
\end{align*}
since the series converges absolutely again.
\par
Finally, the asymptotics for $\scheb{H_q}$ are obtained by essentially
identical arguments.
\end{proof}

The proof confirms the intuitive fact that the large size of
$\cheb{H_q}$ is due directly to the existence of a fairly small
diagonal subgroup $A$ (of index $q$) that contains elements conjugate
to a very large proportion of elements of $H_q$. So the waiting time
is quite close to the waiting time until a non-diagonalizable element
is obtained, which is a geometric random variable $T$ with
$$
\proba(T=k)=\frac{1}{q}\Bigl(1-\frac{1}{q}\Bigr)^{k-1},\quad
\text{ for } k\geq 1
$$
(since very often, it will be the case that sufficiently many
diagonalizable elements will have appeared by the time an element of
$U$ appears to generate the whole group).
\par
This is confirmed by the large second moment $\scheb{H_q}$: it
corresponds to a standard deviation of the waiting time which is
$$
\sqrt{\scheb{H_q}-\cheb{H_q}^2}\ \sim\ q,\quad\text{ as } q\ra +\infty,
$$
i.e., very close to the expectation, similar to the fact that
$$
\variance(T)=q\sqrt{1-\frac{1}{q}}.
$$
\par
The groups $G=H_q$ also show that the inequality~(\ref{eq-maximal}) is
best possible (with the maximal subgroup $H=A$), as observed also
in~\cite{serre-jordan}, so it is not surprising that they lead to high
Chebotarev invariants.

\section{Some finite groups of Lie type}
\label{sec-non-ab-2}

For specific complicated non-abelian groups, the Chebotarev invariant
may be hard to compute exactly, except numerically using the formulas
of Proposition~\ref{pr-comput}, when feasible (we will give examples
from computer calculations in Section~\ref{sec-ex2}).  However, if we
consider infinite families of non-abelian groups, it may be that the
subgroup structure is sufficiently well-known, simple and regular,
that one can derive asymptotic information. In fact, using results
like Proposition~\ref{pr-general-estimates}, it is not needed for this
purpose to have complete control over \emph{all} maximal subgroups.
\par
We illustrate this first simplest family of simple groups of Lie type.

\begin{theorem}\label{th-psl}
\emph{(1)}
We have
$$
\cheb{PSL(2,\Fp_p)}=3+O(p^{-1}),\quad\quad
\scheb{PSL(2,\Fp_p)}=11+O(p^{-1}),
$$
for primes $p\geq 2$. 
\par
\emph{(2)} For all $k\geq 2$, we have
$$
\proba(\tau_{PSL(2,\Fp_p)}=k)=\frac{1}{2^{k-1}}+O(p^{-1})
$$
where the implied constant depends on $k$.

\par
\emph{(3)} The same results hold for $SL(2,\Fp_p)$, and in fact
\begin{equation}\label{eq-sl-psl}
\cheb{SL(2,\Fp_p)}=\cheb{PSL(2,\Fp_p)},
\quad\quad
\scheb{SL(2,\Fp_p)}=\scheb{PSL(2,\Fp_p)} 
\end{equation}
for all $p$.
\end{theorem}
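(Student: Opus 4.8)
The plan is to reduce the whole statement to $PSL(2,\Fp_p)$ and to exploit the classical description of its subgroups, so that Proposition~\ref{pr-general-estimates} applies with a very small set $M$ of maximal subgroups. We may assume $p$ large, since for the finitely many small primes the $O(p^{-1})$ assertions are vacuous. By Dickson's classification, for $p$ large the conjugacy classes of maximal subgroups of $PSL(2,\Fp_p)$ are: the Borel subgroup $B$ (a point stabiliser on $\Pp^1(\Fp_p)$, of index $p+1$); the normaliser $N^{+}$ of a split maximal torus (dihedral of order $p-1$); the normaliser $N^{-}$ of a non-split maximal torus (dihedral of order $p+1$); and a bounded number of further classes, all arising from copies of $A_4$, $S_4$ or $A_5$. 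Groups in this last family have bounded order, so every conjugacy class of $PSL(2,\Fp_p)$ meeting one of them has size $O(p^2)$ (a non-trivial semisimple element of bounded order has centraliser a maximal torus, an involution has centraliser of order $p\pm1$), whence $\nu(\Hc^{\sharp})=O(p^{-1})$ for each such $\Hc$. Setting $M=\{B,N^{+},N^{-}\}$ and $M'=\max(PSL(2,\Fp_p))\setminus M$, we get $\nu(\bigcup_{\Hc\in M'}\Hc^{\sharp})=O(p^{-1})$, so the quantity $p_M$ of Proposition~\ref{pr-general-estimates} equals $1-O(p^{-1})$, and that proposition yields $\cheb{PSL(2,\Fp_p)}=\expect(\tilde{\tau}_M)+O(p^{-1})$ and $\scheb{PSL(2,\Fp_p)}=\expect(\tilde{\tau}_M^2)+O(p^{-1})$.

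The main step is then the bookkeeping of the densities $\nu(\Hc_I^{\sharp})$ for $\emptyset\not=I\subset M$. One uses the list of conjugacy classes of $PSL(2,\Fp_p)$: the identity; two unipotent classes of total size $p^2-1$; about $p/2$ ``split semisimple'' classes (eigenvalues in $\Fp_p^{\times}$) of total density $\tfrac{1}{2}+O(p^{-1})$; about $p/2$ ``non-split semisimple'' classes (eigenvalues in $\Fp_{p^2}\setminus\Fp_p$) of total density $\tfrac{1}{2}+O(p^{-1})$; and the involution class, which is split if $p\equiv1\pmod{4}$ and non-split otherwise (an involution lifts to an order-$4$ element of $SL(2,\Fp_p)$, with eigenvalues $\pm\sqrt{-1}$). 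Then $B^{\sharp}$ is the identity together with the unipotent and the split semisimple classes; $N^{+\sharp}$ is the identity together with the involution and the split semisimple classes; $N^{-\sharp}$ is the identity together with the involution and the non-split semisimple classes; so $\nu(B^{\sharp})=\nu(N^{+\sharp})=\nu(N^{-\sharp})=\tfrac{1}{2}+O(p^{-1})$. For the intersections, $B^{\sharp}\cap N^{+\sharp}$ contains all split semisimple classes, hence has density $\tfrac{1}{2}+O(p^{-1})$, whereas $B^{\sharp}\cap N^{-\sharp}$, $N^{+\sharp}\cap N^{-\sharp}$ and $B^{\sharp}\cap N^{+\sharp}\cap N^{-\sharp}$ consist only of the identity and possibly the involution class, hence have density $O(p^{-1})$.

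It remains to substitute. For the first moment Proposition~\ref{pr-general-estimates} gives $\expect(\tilde{\tau}_M)=\sum_{\emptyset\not=I\subset M}(-1)^{|I|+1}(1-\nu(\Hc_I^{\sharp}))^{-1}$, and for the second, applying Proposition~\ref{pr-collect}(3) with $Z_n=X_n^{\sharp}$ and $\mathcal{E}=\{G^{\sharp}-\Hc^{\sharp}:\Hc\in M\}$ (so that $\tau_{\mathcal{E}}=\tilde{\tau}_M$) gives $\expect(\tilde{\tau}_M^2)=\sum_{\emptyset\not=I\subset M}(-1)^{|I|+1}\tfrac{1+\nu(\Hc_I^{\sharp})}{(1-\nu(\Hc_I^{\sharp}))^{2}}$. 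Using $1/(1-\tfrac{1}{2})=2$, $1/(1-0)=1$, $\tfrac{1+1/2}{(1-1/2)^{2}}=6$, $\tfrac{1+0}{(1-0)^{2}}=1$ and the density table above, the seven-term sums collapse to
$$
\expect(\tilde{\tau}_M)=(2+2+2)-(2+1+1)+1+O(p^{-1})=3+O(p^{-1}),
$$
$$
\expect(\tilde{\tau}_M^2)=(6+6+6)-(6+1+1)+1+O(p^{-1})=11+O(p^{-1}),
$$
which is part (1). For part (2), I would use inclusion--exclusion over $M$ together with the crude bound $\sum_{\Hc\in M'}\nu(\Hc^{\sharp})^m\leq|M'|\,(\max_{\Hc\in M'}\nu(\Hc^{\sharp}))^{m}=O_m(p^{-1})$ for $m\geq1$ to get, for each fixed $m\geq1$,
$$
\proba\bigl(X_1^{\sharp},\ldots,X_m^{\sharp}\text{ do not generate }PSL(2,\Fp_p)\bigr)=3\cdot2^{-m}-2^{-m}+O_m(p^{-1})=2^{-(m-1)}+O_m(p^{-1}),
$$
and then, since $\{\tau\geq n\}$ is the event that $X_1^{\sharp},\ldots,X_{n-1}^{\sharp}$ do not generate, $\proba(\tau=k)=\proba(\tau\geq k)-\proba(\tau\geq k+1)=2^{-(k-2)}-2^{-(k-1)}+O_k(p^{-1})=2^{-(k-1)}+O_k(p^{-1})$ for $k\geq2$.

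Finally, part (3): for $p\geq5$ the group $SL(2,\Fp_p)$ is perfect, so its centre $Z=\{\pm I\}$ consists of non-generators (if $\langle S\rangle Z=SL(2,\Fp_p)$ then $\langle S\rangle$, being normal since $Z$ is central, has abelian and hence trivial quotient), so $Z\subset\Phi(SL(2,\Fp_p))$ and Lemma~\ref{lm-frattini} gives $\cheb{SL(2,\Fp_p)}=\cheb{PSL(2,\Fp_p)}$ and $\scheb{SL(2,\Fp_p)}=\scheb{PSL(2,\Fp_p)}$; since a tuple generates $SL(2,\Fp_p)$ exactly when its image generates $PSL(2,\Fp_p)$, the two waiting times have the same law, so (2) transfers as well (the cases $p=2,3$ are checked directly: $SL(2,\Fp_2)=PSL(2,\Fp_2)$, and $Z$ lies in every maximal subgroup of $SL(2,\Fp_3)$). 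I expect the main difficulty to be the second step --- pinning down the conjugacy classes of $PSL(2,\Fp_p)$ and of its maximal subgroups precisely enough to read off the $\nu(\Hc_I^{\sharp})$ --- which is entirely classical but somewhat delicate, notably because of the dependence of the involution class on $p\bmod 4$.
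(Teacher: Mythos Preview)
Your argument is correct and takes a genuinely different route from the paper's.  The paper works in $SL(2,\Fp_p)$ and, instead of invoking Dickson's classification directly, uses Serre's trace criterion (Lemma~\ref{lm-serre}) as a black box: its three conditions give geometric waiting times $\tau_1,\tau_2,\tau_3$ with parameters $\tfrac12+O(p^{-1}),\tfrac12+O(p^{-1}),1+O(p^{-1})$, and the authors sandwich $\tau_G$ between $\max(\tau_1^*,\tau_2^*)$ (dropping the condition $\Tr\neq0$) and $\max(\tau_1,\tau_2,\tau_3)$, computing moments on both sides.  Their proof of~(2) is also more involved than yours: it decomposes the event $\{\tau_p=k\}$ according to whether $\tau_p$ agrees with the upper or lower sandwich bound, rather than computing $\proba(\tau\geq k)$ directly by inclusion--exclusion as you do.  For~(3), the paper argues that if $-I$ escaped some maximal $H$ then $SL(2,\Fp_p)=\{\pm I\}\times H$, and derives a contradiction from the two unipotent generators having odd order; your perfectness argument is a cleaner variant.

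What each approach buys: yours is the systematic one --- you feed the maximal-subgroup list straight into Proposition~\ref{pr-general-estimates} and Proposition~\ref{pr-collect}, making the seven-term inclusion--exclusion completely explicit, and your tail-probability computation for~(2) is both shorter and more transparent.  The paper's route avoids tabulating the $\nu(\Hc_I^{\sharp})$ and the $p\bmod 4$ case split for the involution, at the price of an ad~hoc sandwich and a longer argument for~(2).  The paper in fact remarks, just before the proof, that a direct approach via Proposition~\ref{pr-comput} and Dickson's classification is possible ``at least to prove~(1)''; you have carried this out for all three parts.
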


Note that the limit of $\cheb{SL(2,\Fp_p)}$ is not the minimal number
of generators of $SL(2,\Fp_p)$ (which is $2$, since $SL(2,\Fp_p)$ is
generated by the two elementary matrices with $1$ over and under the
main diagonal.)
\par
For the proof, we will not use the formula of
Proposition~\ref{pr-comput}, although this could be done at least to
prove (1) (since the subgroups of $PSL(2,\Fp_p)$ are well understood
since Dickson, see, e.g.,~\cite[Th. 2.2]{giudici}). Instead, we use
the following criterion of Serre~\cite[Prop. 19]{serre-elliptic}
(which is itself based on knowing the subgroup structure).

\begin{lemma}[Serre]\label{lm-serre}
Let $p\geq 5$ be a prime number. Assume that $G\subset SL(2,\Fp_p)$ is
a subgroup such that
\par
\emph{(1)} The group $G$ contains an element $s$ such that
$\Tr(s)^2-4$ is a non-zero square in $\Fp_p$, and such that
$\Tr(s)\not=0$;
\par
\emph{(2)} The group $G$ contains an element $s$ such that
$\Tr(s)^2-4$ is not a square in $\Fp_p$, and such that $\Tr(s)\not=0$;
\par
\emph{(3)} The group $G$ contains an element $s$ such that
  $\Tr(s)^2\in\Fp_p$  is not in $\{0,1,2,4\}$, and is not a root of
  $X^2-3X+1$.
\par
Then we have $G=SL(2,\Fp_p)$.
\end{lemma}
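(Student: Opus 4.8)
My plan is to deduce the lemma from the classical classification of subgroups of $PSL(2,\Fp_p)$ due to Dickson (see, e.g.,~\cite[Th. 2.2]{giudici}), using the three hypotheses to eliminate, one by one, every possibility except the full group. Write $\bar G$ for the image of $G$ in $PSL(2,\Fp_p)$. Since $p\geq 5$ is prime, Dickson's theorem says that a proper subgroup of $PSL(2,\Fp_p)$ is, up to conjugacy, contained in one of the following: a Borel subgroup (the stabilizer of a point of $\Pp^1(\Fp_p)$); the normalizer of a split maximal torus (a dihedral group, the torus having order $p-1$ up to centre); the normalizer of a non-split maximal torus (again dihedral, the torus having order $p+1$ up to centre); or one of the exceptional subgroups $A_4$, $S_4$, $A_5$. (There are no subfield subgroups $PSL(2,\Fp_{p'})$ or $PGL(2,\Fp_{p'})$ here because $p$ is prime.) So it suffices to show that, under (1)--(3), none of these four alternatives can hold, and then to promote $\bar G=PSL(2,\Fp_p)$ to $G=SL(2,\Fp_p)$.

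The engine of the argument is the dictionary between the trace of $s\in SL(2,\Fp_p)$ and the conjugacy behaviour of $\bar s$. Condition (1) says $G$ contains $s$ whose characteristic polynomial $X^2-\Tr(s)X+1$ has two distinct roots $\lambda,\lambda^{-1}\in\Fp_p^{\times}$ with $\lambda\neq-\lambda^{-1}$; equivalently, $s$ is regular split semisimple and $\Tr(s)\neq 0$. Condition (2) says $G$ contains $s$ whose eigenvalues lie in $\Fp_{p^2}\setminus\Fp_p$, with $\Tr(s)\neq 0$; such an $s$ is anisotropic, lying in a non-split torus but fixing no $\Fp_p$-point of $\Pp^1$. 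Finally, computing $\Tr(s)^2$ for $\bar s$ of each order $1,2,3,4,5$ (using $\Tr(s^2)=\Tr(s)^2-2$, and that an order-$n$ lift has eigenvalues which are $2n$-th roots of unity) shows these five orders correspond exactly to $\Tr(s)^2\in\{4\},\{0\},\{1\},\{2\}$ and to $\Tr(s)^2$ being a root of $X^2-3X+1$; so condition (3) says $G$ contains $s$ with $\ord(\bar s)\geq 6$.

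Now I eliminate the four cases. Every element of a Borel subgroup, and every element of the normalizer of a split torus, either has $\Tr^2-4$ a square in $\Fp_p$ (the elements of the torus or of the unipotent radical, whose eigenvalues all lie in $\Fp_p$) or has trace $0$ (the ``reflections'' outside the torus, which in $SL_2$ square to $-I$ and hence have trace $0$); therefore the element furnished by (2), which has $\Tr^2-4$ a non-square and nonzero trace, lies in no Borel and in no split-torus normalizer. Dually, every element of the normalizer of a non-split torus either has $\Tr^2-4$ a non-square or zero (elements of the torus, together with $\pm I$) or has trace $0$; hence the element furnished by (1), with $\Tr^2-4$ a nonzero square and nonzero trace, lies in no non-split-torus normalizer. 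And since $A_4$, $S_4$, $A_5$ have no element of order $\geq 6$, the element furnished by (3) keeps $\bar G$ out of the exceptional case. By Dickson's list, this forces $\bar G=PSL(2,\Fp_p)$. To finish, if $G\neq SL(2,\Fp_p)$ then, since $G$ surjects onto $PSL(2,\Fp_p)$, we must have $G\cap\{\pm I\}=\{I\}$, so $SL(2,\Fp_p)\cong G\times\Zz/2\Zz$ would have a quotient of order $2$; but $SL(2,\Fp_p)$ is perfect for $p\geq 5$, a contradiction. Hence $-I\in G$ and $G=SL(2,\Fp_p)$.

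The main obstacle is not conceptual but bookkeeping: one must state Dickson's classification precisely in the case $q=p$, and verify the trace/conjugacy-class dictionary with care at the boundary values ($\Tr(s)=0$, $\Tr(s)^2=4$, and the small-$p$ degeneracies such as $p=5$, where the root of $X^2-3X+1$ collides with $4$ and hypothesis (3) becomes vacuous). Once the structural classification is granted, the three hypotheses slot neatly into ruling out the four families, and the argument is essentially mechanical.
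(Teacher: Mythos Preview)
The paper does not give its own proof of this lemma: it is simply quoted as a criterion of Serre~\cite[Prop.~19]{serre-elliptic}, with the parenthetical remark that it ``is itself based on knowing the subgroup structure.'' Your proposal supplies precisely such a proof, via Dickson's classification of subgroups of $PSL(2,\Fp_p)$, and it is correct. The trace dictionary you set up (matching $\Tr(s)^2\in\{4,0,1,2\}$ and the roots of $X^2-3X+1$ to orders $1,2,3,4,5$ of $\bar s$) is accurate; conditions (1) and (2) do eliminate the non-split and split torus normalizers (and the Borel) as you describe, since the off-torus ``reflections'' in each dihedral normalizer square to $\pm I$ and hence have trace $0$; and condition~(3) disposes of $A_4,S_4,A_5$. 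The lift from $\bar G=PSL(2,\Fp_p)$ to $G=SL(2,\Fp_p)$ via perfectness is also fine (and is essentially the same Frattini-type argument the paper uses later in the proof of Theorem~\ref{th-psl}).

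One small addendum to your remark on degeneracies: not only $p=5$ but also $p=7$ makes hypothesis~(3) vacuous, since for $p=7$ the set of squares in $\Fp_7$ is exactly $\{0,1,2,4\}$ and $X^2-3X+1$ has no root there. This does not affect your argument, as the lemma is then vacuously true.
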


\begin{proof}[Proof of Theorem~\ref{th-psl}]
  We first notice that we need only check~(\ref{eq-sl-psl}) and then
  consider the case of $SL(2,\Fp_p)$. These equalities are
  consequences of Lemma~\ref{lm-frattini}, since\footnote{\ In fact,
    it is known that there is equality, but we do not need this
    stronger fact.} $\{\pm I\} \subseteq \Phi_p$, where $\Phi_p$ is
  the Frattini subgroup of $SL(2,\Fp_p)$. Indeed, we may of course
  assume that $p\neq 2$; then, if $p$ is such that $ - I \notin
  \Phi_p$, there exists a maximal subgroup $H$ of $SL(2,\Fp_p)$ which
  surjects to $PSL(2,\Fp_p)$.  We would then have
$$
SL(2,\Fp_p)= \{\pm I\} \times H
$$
which is impossible, since $SL(2,\Fp_p)$ is generated
by the elements
$$
\begin{pmatrix}1 & 1 \\0 &
    1\end{pmatrix}\quad\quad
\begin{pmatrix}1 & 0 \\1 & 1
\end{pmatrix},
$$
both of which are of odd order $p$, hence contained in $H$ (compare
with~\cite[IV-23]{serre-elladic}).
\par
Now we consider $SL(2,\Fp_p)$, and we assume $p\geq 5$. Let
$\tau=\tau_{SL(2,\Fp_p)}$ denote the corresponding waiting time, and
let $\tau_1$, $\tau_2$, $\tau_3$ denote the waiting times for
conjugacy classes satisfying the conditions (1), (2) and (3) in
Lemma~\ref{lm-serre}, i.e., for instance
$$
\tau_1=\min\{n\geq 1\,:\, s=X_n^{\sharp} \text{ has $\Tr(s)\not=0$ and
  $\Tr(s)^2-4$ is in $(\Fp_p^{\times})^2$}\}.
$$
\par
Let also $\tau^*_1$, $\tau^*_2$ be the waiting times for conditions
(1) and (2) \emph{without} the condition $\Tr(s)\not=0$.  Note that
(1) and (2) are exclusive conditions.  Moreover, each $\tau_i$ is a
geometric random variable, with parameters, respectively
\begin{equation}\label{eq-equal-coupons}
p_1=\frac{1}{2}+O(p^{-1}),\quad
p_2=\frac{1}{2}+O(p^{-1}),\quad
p_3=1+O(p^{-1}),
\end{equation}
and for $\tau_1^*$, $\tau_2^*$, the parameters are also
$$
p_1^*=\frac{1}{2}+O(p^{-1}),\quad
p_2^*=\frac{1}{2}+O(p^{-1})\ ;
$$
all these facts can be checked easily, e.g., by looking at tables of
conjugacy classes in $SL(2,\Fp_p)$ (for
instance,~\cite[p. 71]{fulton-harris}).
\par
We then have
$$
\max(\tau_1^*,\tau_2^*)\leq \tau_p\leq \max(\tau_1,\tau_2,\tau_3),
$$
where the right-hand inequality comes from Lemma~\ref{lm-serre} and
the left-hand inequality is due to the fact that the Borel subgroup
$$
B=\Bigl\{\begin{pmatrix}x&a\\0&x^{-1}
\end{pmatrix}\Bigr\}\subset SL(2,\Fp_p)
$$
intersects every conjugacy class satisfying (1) (so that $\tau_p\geq
\tau_2^*$) and the non-split Cartan subgroup
$$
C_{ns}=\Bigl\{\begin{pmatrix}
a&b\\
\eps b& a
\end{pmatrix}
\Bigr\}\subset SL(2,\Fp_p)
$$
intersects every conjugacy class satisfying (2), where
$\eps\in\Fp_p^{\times}$ is a fixed non-square element (so that
$\tau_p\geq \tau_1^*$).
\par
By applying Proposition~\ref{pr-comput} to compute the expectation and
second moment on the two extreme sides, we find
$$
3+O(p^{-1})\leq \expect(\tau_p)\leq 3+O(p^{-1}),\quad\quad
11+O(p^{-1})\leq \expect(\tau_p^2)\leq 11+O(p^{-1}).
$$
which proves (1).
\par
To prove (2), fix some $k\geq 2$. We denote
$$
\tau^*_p=\max(\tau^*_1,\tau^*_2),\quad\quad
\tau'_p=\max(\tau_1,\tau_2,\tau_3),
$$
\par
We have the equality of events
$$
\{\tau_p=k\}=\{\tau_p=\tau'_p=k\}\cup \{\tau_p=k<\tau'_p\},
$$
which is of course a disjoint union. Then we note that
$$
\proba(\tau_p=k<\tau'_p)\leq \sum_{1\leq j\leq k}
{\proba(\tau_p^*=j,\ \tau_p'>j)}.
$$
\par
But clearly, if $\tau^*_p=j$ and $\tau^*_p<\tau'_p$, either one of the
conjugacy classes $(X_1^{\sharp},\ldots, X_j^{\sharp})$ has trace
zero, or otherwise we must have $\tau'_p=\tau_3>j\geq 2$.  In the
first case, since all $X_n$ have the same uniform distribution, the
probability is at most
$$
j\proba(\Tr(X_1^{\sharp})=0)\ll jp^{-1}
$$
for all $p\geq 2$ (again by looking at conjugacy classes for
example). In the second case, we have
$$
\proba(\tau_3>j)\leq \proba(\tau_3\geq 2)\ll p^{-2}.
$$
\par
Combining this with the equality of events we found, it follows that
for $k$ fixed, we have
$$
\proba(\tau_p=k)=\proba(\tau_p=\tau'_p=k)+O(p^{-1})
$$
where the implied constant depends on $k$.
\par
Next we note that
$$
\{\tau'_p=k\}=\{\tau_p=\tau'_p=k\}\cup \{\tau'_k=p,\ \tau_p<k\},
$$
again a disjoint union. As above, we find that
$$
\proba(\tau'_k=p,\ \tau_p<k)\leq
\sum_{j=1}^{k-1}{\proba(\tau^*_p=j<\tau'_p)}
\ll p^{-1}
$$
where the implied constant depends on $k$, and hence we have finally
$$
\proba(\tau_p=k)=\proba(\tau'_p=k)+O(p^{-1}),
$$
and the result now follows easily: first, by arguments already used,
we have
$$
\proba(\tau'_p=k)=\proba(\max(\tau_1,\tau_2)=k)+O(p^{-1})
$$
and then we are left with a coupon collector problem with two coupons
of roughly equal probability by~(\ref{eq-equal-coupons}). This gives
$$
\proba(\max(\tau_1,\tau_2)=k)=
p_1^{k-1}p_2+p_2^{k-1}p_1=
2\Bigl(\frac{1}{2}+O(p^{-1})\Bigr)^{k}=\frac{1}{2^{k-1}}+O(p^{-1})
$$
for $p\geq 2$, the implied constant depending on $k$.
\end{proof}

\begin{remark}
Part (2) states that the waiting time $\tau_{SL(2,\Fp_p)}$ converges
in law, as $p\ra +\infty$, to the waiting time for a coupon collector
problem with two coupons of probability $1/2$. Intuitively, those
represent finding matrices with split or irreducible characteristic
polynomial. 
\end{remark}

% \begin{remark}
% In fact, one can guess that
% $$
% \lim_{p\ra+\infty}{\proba(\tau_{PSL(2,\Fp_p)}=k)}=\frac{1}{2^{k-1}},
% $$
% for $k\geq 2$, and is $0$ for $k=1$, by analogy with the standard
% coupon collector problem with two coupons, for which this is the
% distribution of the waiting time; here the two coupons are,
% intuitively, one matrix with irreducible characteristic polynomial,
% and one with split characteristic polynomial.
% \end{remark}

\begin{remark}
  Recent results of Fulman and Guralnick (announced
  in~\cite{fulman-guralnick}) should lead to a similar good
  understanding of $\cheb{\Gg(\Fp_q)}$ when $\Gg$ is a fixed (almost
  simple) algebraic group over $\Qq$. Indeed, their results should
  also be applicable to situations with rank going to infinity, which
  are analogue of the symmetric and alternating groups that we
  consider now.\\

\end{remark}

\section{Symmetric and alternating groups}
\label{sec-sym-alt}

We now come to the case of the symmetric groups $\sy_n$ and
alternating groups $A_n$. Here we have the following result, which is
a precise formulation of a result essentially conjectured by
Dixon~\cite[Abstract]{dixon}, following McKay:\footnote{\ This
  conjecture is imprecisely formulated in~\cite{dixon}, where the
  ``expected number of elements needed to generate $\sy_n$
  invariably'' seems to mean any $r(n)$ for which
  $\proba(\cheb{\sy_n}>r(n))\ra 0$.}

\begin{theorem}\label{th-cheb-symmetric}
For $n\geq 1$, we have
$$
\cheb{\sy_n}\asymp 1,\quad\quad \cheb{A_n}\asymp 1,\quad\quad
\scheb{\sy_n}\asymp 1,\quad\quad \scheb{A_n}\asymp 1.
$$
\par
In fact, there exists a constant $c>1$ such that, for all $n\geq 1$,
we have
$$
\expect(c^{\tau_{\sy_n}})\ll 1,\quad\quad
\expect(c^{\tau_{A_n}})\ll 1.
$$
\end{theorem}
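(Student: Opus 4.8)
The plan is to prove the strongest statement --- that $\expect(c^{\tau_{\sy_n}}) \ll 1$ and $\expect(c^{\tau_{A_n}}) \ll 1$ for some fixed $c>1$ --- since the $\asymp 1$ bounds for $\cheb{}$ and $\scheb{}$ follow immediately (for $c>1$ we have $c^k \geq k$ and $c^k \gg k^2$ for all $k\geq 1$, so $\cheb{G} = \expect(\tau_G) \leq \expect(c^{\tau_G})$ and similarly for the second moment, while the lower bound $\cheb{G}\geq \delta(G) = 2$ for $n\geq 3$ is the trivial bound~(\ref{eq-triv-lb})). The key to controlling $\expect(c^{\tau})$ is the tail formula: writing $q_n = \proba(X_1^{\sharp},\ldots,X_n^{\sharp} \text{ do not generate } \sy_n)$, one has $\{\tau_{\sy_n} > n\}$ is exactly this event, and $\expect(c^{\tau}) = \sum_{n\geq 0} c^{n}(c-1) \,\proba(\tau > n) + (\text{boundary terms})$, more precisely $\expect(c^\tau) \leq c + (c-1)\sum_{n\geq 1} c^n q_n$ after a summation by parts. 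So the whole problem reduces to showing that $q_n$ decays \emph{geometrically} in $n$, uniformly in the degree: if $q_n \leq A \theta^n$ for constants $A$ and $\theta < 1$ independent of $n$, then any $c$ with $1 < c < 1/\theta$ works.

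The strategy for bounding $q_n$ is inclusion--exclusion over conjugacy classes of maximal subgroups of $\sy_n$, organized by the classification of maximal subgroups (intransitive, imprimitive, and primitive), exactly in the spirit of Proposition~\ref{pr-collect} and the remark giving $q_n = \sum_{\emptyset\neq I\subset \max(\sy_n)} (-1)^{|I|+1}\nu(\Hc_I^{\sharp})^n$. Rather than use full inclusion--exclusion (which has too many terms), I would bound $q_n \leq \sum_{\Hc \in \max(\sy_n)} \nu(\Hc^{\sharp})^n$ by the union bound, and control this sum. The dominant contributions come from the "large" maximal subgroups: the intransitive ones $\sy_k \times \sy_{n-k}$ (whose conjugacy class covers a set of density $1 - c_k/\text{something}$ for small $k$ --- these are the dangerous ones) and the imprimitive ones $\sy_a \wr \sy_b$ with $ab = n$. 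For a permutation $\sigma$, being conjugate into $\sy_k\times\sy_{n-k}$ means $\sigma$ has an invariant set of size $k$, i.e. the cycle type of $\sigma$ admits a sub-multiset of cycle lengths summing to $k$; being conjugate into $\sy_a\wr\sy_b$ means the cycle type is "$b$-balanced" in a suitable sense. The point --- this is the arithmetic/combinatorial heart, essentially Dixon's observation refined --- is that a uniformly random $\sigma \in \sy_n$ has, with probability bounded away from $1$ by an absolute constant, a cycle type with (say) two cycles of coprime lengths both exceeding $n/2$... no, more carefully: one shows $\nu(\Hc^{\sharp}) \leq 1 - \delta$ for every maximal $\Hc$ with $\delta$ absolute, \emph{except} that for intransitive subgroups with $k$ bounded $\nu$ can be close to $1$; but there are only $O(1)$ such classes for each bounded $k$ and one handles them by noting $\nu(\Hc^{\sharp})$ for $\Hc$ the class of $\sy_k\times\sy_{n-k}$ equals $\proba(\sigma \text{ fixes some } k\text{-set})$ which is $\leq 1 - e^{-1} + o(1) < 1$ uniformly (and one gets genuine geometric decay by combining two well-chosen classes, e.g. stabilizers of a point and an imprimitivity structure, whose intersection has small density). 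Then the contribution of primitive maximal subgroups is negligible because, by the Cameron--Maróti-type bounds on the number and order of primitive groups, $\sum_{\Hc \text{ primitive}} \nu(\Hc^{\sharp})^n$ is super-exponentially small; the intransitive and imprimitive contributions each form a convergent geometric-type series once one observes $\nu \leq 1-\delta$ uniformly for the "bulk" and handles the $O(1)$ exceptional small-$k$ classes separately, each contributing a term $\leq (1-e^{-1}+o(1))^n$.

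The same argument applies verbatim to $A_n$, using that maximal subgroups of $A_n$ are (intersections with $A_n$ of) maximal subgroups of $\sy_n$ together with $A_k\times A_{n-k}$-type and $(\sy_k\times\sy_{n-k})\cap A_n$ families, plus a bounded extra list, and that for $A_n$ the minimal number of generators is still $2$ ($n\geq 4$), so the trivial lower bound gives $\cheb{A_n}\geq 2$. I would then finish by assembling: choose $\theta < 1$ exceeding the worst base appearing (the maximum over the finitely many exceptional small-$k$ intransitive stabilizers of their asymptotic density, which is $< 1$), verify $q_n \leq A\theta^n$, pick $c \in (1, 1/\theta)$, and conclude $\expect(c^{\tau_{\sy_n}}) \leq c + (c-1)\sum_{n\geq 1}(c\theta)^n A < \infty$ uniformly in $n$.

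The main obstacle is the uniform bound $\nu(\Hc^{\sharp}) \leq 1-\delta$ with $\delta$ absolute, and in particular showing geometric (not merely $o(1)$) decay of $q_n$ in the presence of the intransitive subgroups $\sy_k\times\sy_{n-k}$ for small fixed $k$: a single such class has $\nu(\Hc^{\sharp}) \to 1 - e^{-1/k\cdots}$ which is bounded away from $1$ but one must make sure the \emph{union} over all of them (and over imprimitive classes) does not approach density $1$. This is exactly where the real work lies --- it requires quantitative estimates, in the style of the Erdős--Turán statistics of random permutations, on the probability that a random $\sigma\in\sy_n$ simultaneously fails to be "spread out", e.g. the probability that $\sigma$ has \emph{no} pair of cycles whose lengths are coprime and each $> n/3$, which must be shown $\leq 1-\delta_0$ for an absolute $\delta_0 > 0$; granting such a statement (which is where I would either cite or reprove a Dixon-type lemma), the rest is bookkeeping with convergent series.
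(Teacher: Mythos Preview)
Your overall reduction is sound: geometric decay of $q_m := \proba(X_1^{\sharp},\ldots,X_m^{\sharp}\text{ do not generate})$, uniform in the degree, is exactly what is needed, and the tail-sum manipulation for $\expect(c^\tau)$ is fine. (You use $n$ for both the degree of the symmetric group and the number of samples, which makes the sketch hard to parse; I write $N$ for the degree below.)

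However, your route to geometric decay via the union bound $q_m \leq \sum_{\Hc\in\max(\sy_N)} \nu(\Hc^{\sharp})^m$ has a genuine gap. The number of conjugacy classes of maximal subgroups of $\sy_N$ grows with $N$ --- there are $\lfloor N/2\rfloor$ intransitive ones $\sy_k\times\sy_{N-k}$ alone --- so even a uniform bound $\nu(\Hc^{\sharp})\leq 1-\delta$ would only give $q_m \leq |\max(\sy_N)|\,(1-\delta)^m$, which is not uniform in $N$. Your assertion that only ``$O(1)$ exceptional small-$k$'' intransitive classes matter, with the rest forming a ``convergent geometric-type series'', is not justified: for $k$ in the middle range the densities $\nu(\Hc_k^{\sharp})$ do not decay with $k$ (for instance $\nu(\Hc_{N/2}^{\sharp})$ stays bounded away from both $0$ and $1$), and there are $\sim N$ such terms. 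Making this union bound converge uniformly would require something like $\sum_{k}\nu(\Hc_k^{\sharp})^m = O(1)$ for some fixed $m$, which is a statement at least as deep as the \L uczak--Pyber theorem itself and does not follow from a pointwise bound $\nu(\Hc^{\sharp})\leq 1-\delta$.

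The paper's approach bypasses the union bound entirely with a much cheaper observation. It cites \L uczak--Pyber (Theorem~\ref{th-dixon}) directly for the single statement ``there exists $m_0$ with $q_{m_0}\leq 1/2$ for all $N$'', and then uses submultiplicativity: if $(X_1^{\sharp},\ldots,X_{k-1}^{\sharp})$ fail to generate, then so does every disjoint block of $m_0$ consecutive classes among them, and these $\lfloor(k-1)/m_0\rfloor$ block-failure events are independent. Hence $\proba(\tau_{\sy_N}=k)\leq 2\cdot 2^{-(k-1)/m_0}$, and the bound on $\expect(c^{\tau})$ follows for any $1<c<2^{1/m_0}$. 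This is a two-line deduction once \L uczak--Pyber is in hand; your programme of classifying maximal subgroups and bounding each $\nu(\Hc^{\sharp})$ separately is effectively an attempt to \emph{reprove} \L uczak--Pyber, and the union-bound organization you propose is not the right vehicle for that.
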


The proof is based on the following difficult result of \L uczak and
Pyber, the proof of which involves a lot of information on symmetric
groups.

\begin{theorem}[\L uczak and Pyber]\label{th-dixon}
For any $\eps>0$, there exists a constant $C$ depending only on $\eps$
such that
$$
\proba((X_1^{\sharp},\ldots,X_m^{\sharp})\text{ generate }
\sy_n)
>1-\eps
$$
for all $m\geq C$ and all $n\geq 1$. The same applies to $A_n$.
% We have
% $$
% \cheb{\sy_n}\ll \sqrt{\log n}
% $$
% for $n\geq 2$.
\end{theorem}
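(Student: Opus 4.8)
The plan is to bound the failure probability by a union bound over conjugacy classes of maximal subgroups and to show that the total contribution tends to $0$ as $m\to+\infty$, uniformly in $n$. By the characterization of generation recalled after the definition, the classes $(X_1^{\sharp},\ldots,X_m^{\sharp})$ fail to generate $\sy_n$ exactly when some conjugacy class $\mathcal{H}$ of proper maximal subgroups satisfies $X_i^{\sharp}\in\mathcal{H}^{\sharp}$ for every $i$. By independence this happens with probability $\nu(\mathcal{H}^{\sharp})^m$ for a fixed $\mathcal{H}$, so subadditivity gives
$$
\proba(\text{fail})\leq \sum_{\mathcal{H}\in\max(\sy_n)}\nu(\mathcal{H}^{\sharp})^m .
$$
I would then split $\max(\sy_n)$ according to the O'Nan--Scott dichotomy into (A) the class of $A_n$, (B) the intransitive classes $\sy_k\times\sy_{n-k}$ with $1\leq k\leq n/2$, and (C) the transitive proper classes other than $A_n$ (imprimitive wreath products and primitive subgroups), and estimate the three contributions separately.

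The two ``outer'' contributions are comparatively soft. For (A) one has $\nu(A_n^{\sharp})=\tfrac12$, so this term is exactly $2^{-m}$. For (C), rather than summing $\nu(\mathcal{H}^{\sharp})^m$ I would use a direct event-inclusion: if some transitive proper class $\mathcal{H}\neq A_n$ traps all the $X_i$, then in particular $X_1$ lies in a transitive proper subgroup different from $A_n$, so the contribution of (C) is at most the single-element probability $t_n=\proba(X_1\in H\text{ for some transitive proper }H\neq A_n)$. This $t_n$ is precisely what the deep counting theorem of \L uczak and Pyber controls: the proportion of elements of $\sy_n$ lying in a transitive subgroup other than $\sy_n,A_n$ tends to $0$. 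That is the genuinely hard ingredient (it rests on the classification of primitive groups and on order bounds à la Bochert--Babai), and I would cite it as a black box.

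The intransitive contribution (B) is the part I would carry out by hand, and it is the main combinatorial obstacle to uniformity in $n$. Writing $a_k=\proba(\sigma\in\sy_n\text{ has an invariant }k\text{-set})$, I must show that $\sum_{k\leq n/2}a_k^m\to 0$ as $m\to+\infty$ uniformly in $n$, and I would split the range of $k$. For small $k$ (below an absolute threshold $K_0$) the dominant case is $k=1$, where $a_1\to 1-e^{-1}<1$; elementary cycle statistics give $a_k\leq 1-c_0$ for an absolute $c_0>0$, so this part is at most $K_0(1-c_0)^m$. For large $k$ the key mechanism is that a long cycle forbids intermediate invariant-set sizes: if $\sigma$ has a cycle of length $\ell$, then $\sigma$ has no invariant $k$-set with $n-\ell<k<\ell$, and a uniform element has its longest cycle exceeding $n/2$ with probability $\log 2$. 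Combined with the exact-size constraint, and with the identity $\expect(N_k)=1$ for the number $N_k$ of invariant $k$-sets (which forces $N_k$ to be over-dispersed, hence usually zero, for $k$ in the bulk), this should yield decay of $a_k$ in $k$ strong enough that $\sum_{k>K_0}a_k^m$ is bounded uniformly in $n$ and tends to $0$ with $m$.

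Finally I would assemble uniformity as follows. Given $\eps$, choose $N_0$ so that $t_n<\eps/3$ for $n\geq N_0$ (from (C)), and choose $C_1$ so that the estimates from (A) and (B) are each below $\eps/3$ for $m\geq C_1$, uniformly in $n$; this settles all $n\geq N_0$. For the finitely many $n<N_0$ one uses that $\sy_n$ is invariably generated almost surely (equivalently $\cheb{\sy_n}<+\infty$, so that $\proba(\tau_{\sy_n}>m)\to 0$) to pick a threshold for each, and takes the maximum of all thresholds. The statement for $A_n$ follows by the same scheme applied to $\max(A_n)$, the \L uczak--Pyber counting theorem again supplying the transitive contribution. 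The hard parts are unquestionably the uniform-in-$n$ decay in regime (B) and the cited transitive estimate; the remaining steps are bookkeeping.
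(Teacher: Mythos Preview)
The paper does not prove this theorem: immediately after the statement it simply records ``This is proved in~\cite{luczak-pyber}, improving earlier work of Dixon~\cite{dixon}.'' and then uses it as a black box (to derive Theorem~\ref{th-cheb-symmetric}). So there is no ``paper's own proof'' to compare against; you have in fact attempted to sketch the \L uczak--Pyber argument itself.

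Your outline is broadly the right architecture, and your handling of contributions (A) and (C) is fine; note in particular that the black box you invoke for (C) is precisely Theorem~\ref{th-lp-2} of the paper, which is the other \L uczak--Pyber result quoted here without proof. The genuine gap is in (B). To get uniformity in $n$ for $\sum_{1\leq k\leq n/2} a_k^m$, it is not enough that each $a_k$ be bounded away from $1$: that would only give a bound of order $(n/2)(1-c_0)^m$, which is not uniform. One needs quantitative decay of $a_k$ in $k$ (something like $a_k\ll k^{-\delta}$ for an absolute $\delta>0$), and then the sum is controlled by $\sum_k a_k^{m_0}\cdot(1-c_0)^{m-m_0}$ for a fixed $m_0>1/\delta$. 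Your heuristics about long cycles and the identity $\expect(N_k)=1$ do not by themselves produce such a decay estimate; establishing it is exactly the combinatorial core of \cite{luczak-pyber} (later sharpened by Eberhard--Ford--Green), and your sketch only points at the phenomenon without supplying the mechanism. So as written, step (B) is a gap rather than a proof, even though you correctly flag it as the hard part.
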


This is proved in~\cite{luczak-pyber}, improving earlier work of
Dixon~\cite{dixon}. 

\begin{proof}
  We need only prove that the exponential moments
  $\expect(c^{\tau_{n}})$ are bounded for some $c>1$, where
  $\tau_n=\tau_{G_n}$ with $G_n=\sy_n$ (the $A_n$ case is similar).
\par
From Theorem~\ref{th-dixon}, there exists $m\geq 1$ such that
\begin{equation}\label{eq-lp-bound}
\proba((Y_1^{\sharp},\ldots,Y_m^{\sharp})\text{ do not generate }
\sy_n)
\leq\frac{1}{2}
\end{equation}
for any family of independent, uniformly distributed random variables
$Y_i$ on $G_n$.
\par
Now let $k\geq 1$ be given; we can partition the set
$\{1,\ldots,k-1\}$ in $\lfloor (k-1)/m\rfloor\geq 0$ subsets of size
$m$ and a remainder, and we observe that if $\tau_n=k$, for each of
these subsets $I$, we have
$$
\proba((X_i^{\sharp}),\ i\in I)\leq \frac{1}{2},
$$
by independence and~(\ref{eq-lp-bound}). Since all those sets are
disjoints, we get 
$$
\proba(\tau_n=k)\leq \Bigl(\frac{1}{2}\Bigr)^{\lfloor (k-1)/m\rfloor}
\leq 2^{1-(k-1)/m}
$$
for $k\geq 1$, and then, for any $c\geq 1$, we have
$$
\expect(c^{\tau_n})=\sum_{k\geq 1}{c^k\proba(\tau_n=k)}
\leq 2^{1+1/m}\sum_{k\geq 1}{(c2^{1/m})^k}
$$
which converges, and is independent of $n$, for any $c$ with
$1<c<2^{1/m}$. 
\end{proof}

In view of this, the following question seems natural:

\begin{question}
Is it true or not that for \emph{all} $c>1$, we have
$$
\expect(c^{\tau_{\sy_n}})\ll 1
$$
for $n\geq 1$ (and similarly for $A_n$)?
\end{question}

% In fact, Dixon (following McKay) conjectures that
% $$
% \cheb{\sy_n}\ll 1
% $$
% for $n\geq 1$, with an upper bound that could be as small as $6$ (in
% our experiments with $n\leq 24$, the largest value is about $5.6$ for
% $\sy_6$, and all other values are less than $5$).
% \par
% Similar results for simple groups of Lie type are announced
% in~\cite{fulman-guralnick}. However, a number of preprints that this
% paper references do not seem to have appeared or to be accessible. In
% addition, the probabilistic framework of these works is not as
% explicit as ours. For instance, references (in~\cite[p. 8]{dixon2})
% to, e.g., ``the expected number of random elements required to
% invariably generate $S_n$'', are not entirely clear, and there is no
% statement that is explicitly an estimate for $\cheb{\sy_n}$ in
% these various papers, although it is likely that such estimates could
% be derived from the results.

Another natural question, also suggested by Dixon, is:

\begin{question}\label{q-limit}
  Do the sequences $(\cheb{\sy_n})$ (or $(\cheb{A_n})$)
  converge as $n\ra +\infty$? If they do, can their limits be
  computed?
\end{question}

Our guess is that the answer is positive. In fact, we now present a
heuristic model that suggests this and predicts the value of
$$
\lim_{n\ra +\infty}{\cheb{A_n}}.
$$
\par
Our first step is to apply Corollary~\ref{cor-asymptotic} to a
suitable ``essential'' set of maximal subgroups of symmetric groups to
obtain a simpler waiting time that is asymptotically close to
$\cheb{A_n}$ (or to $\cheb{\sy_n}$). The required result is again one
due to \L uczak and Pyber~\cite{luczak-pyber}.

\begin{theorem}[\L uczak and Pyber]\label{th-lp-2}
For $n\geq 1$, let $S_n$ be the set of $g\in\sy_n$ such that
$g$ is contained in a subgroup $H$ of $\sy_n$, distinct from $A_n$,
and such that $G$ acts transitively on $\{1,\ldots, n\}$. Then we have
$$
\lim_{n\ra +\infty}{\nu_n(S_n)}=0,
$$
where $\nu_n(A)=|A|/|\sy_n|$ is the uniform density on the
symmetric group.
\end{theorem}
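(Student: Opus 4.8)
The plan is to reduce the statement to a counting estimate over the transitive (but not $2$-transitive, or at least proper) subgroups of $\sy_n$ and to invoke known results on the distribution of permutations that lie in such subgroups. First I would stratify the set $S_n$ according to the minimal transitive overgroup $H$ containing $g$; since every transitive group sits inside a maximal transitive subgroup of $\sy_n$ other than $A_n$, we have
$$
\nu_n(S_n)\leq \sum_{H}\nu_n(H),
$$
where $H$ runs over conjugacy classes of maximal subgroups of $\sy_n$ that act transitively and are distinct from $A_n$. The maximal transitive subgroups are classified (by the O'Nan--Scott theorem) into a short list of types: imprimitive groups $\sy_a\wr\sy_b$ with $ab=n$, $a,b>1$; primitive groups of product/affine/diagonal type; and the almost simple primitive groups. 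I would handle each family in turn.

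For the imprimitive wreath products, one has $|\sy_a\wr\sy_b|=(a!)^b\,b!$, so $\nu_n(\sy_a\wr\sy_b)=(a!)^b b!/n!$; summing over all factorizations $n=ab$ with $a,b\geq 2$ gives a quantity that is $o(1)$ — in fact exponentially small — because $(a!)^bb!/n!$ is maximized near the endpoints ($a=2$, $b=n/2$ or $a=n/2$, $b=2$) and is still tiny there. For the primitive subgroups, the key input is the classical bound of Bovey (or Babai--Cameron--P\'alfy, or Mar\'oti) that the order of a primitive subgroup of $\sy_n$ other than $A_n$, $\sy_n$ is at most $n^{c\log n}$ (indeed $\leq 4^n$, and much less outside a controlled list), together with a bound of Babai on the number of such subgroups; combined these give $\sum_{H\text{ primitive}}\nu_n(H)=o(1)$. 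Alternatively — and this is the route I expect \L uczak and Pyber actually take — one estimates directly the density of permutations lying in \emph{some} proper transitive subgroup by fixing a primitive block system or a primitive action and counting: a uniformly random permutation is, with probability $1-o(1)$, not contained in any transitive subgroup of order $\leq n^{c\log n}$, because such a subgroup meets only an exponentially small fraction of $\sy_n$ and there are subexponentially many of them up to conjugacy.

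The main obstacle is the almost simple primitive case, where the overgroup $H$ can be large (e.g.\ $H=\mathrm{Alt}(m)$ or $\mathrm{Sym}(m)$ acting on $k$-subsets, so $n=\binom{m}{k}$, or a classical group acting on an orbit of subspaces). Here the crude order bound $|H|\leq 4^n$ is not by itself enough to beat the number of conjugacy classes cleanly, and one must use the finer classification: the primitive groups of degree $n$ that are \emph{not} contained in a ``known'' family have order at most $n^{1+\lfloor\log_2 n\rfloor}$ (Mar\'oti), while the large ones ($\mathrm{Alt}(m)$ on $k$-sets, affine groups $\mathrm{AGL}(d,p)$, etc.) form a sparse explicit list, for each member of which one checks that a random permutation avoids it with probability $1-o(1)$ — typically because the cycle type of a generic permutation is incompatible with membership (e.g.\ a random permutation of $\binom{m}{k}$ points does not arise from an element of $\sy_m$ acting on $k$-subsets, since the latter has a very constrained cycle structure). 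Assembling these pieces — the negligible imprimitive contribution, the subexponential-count-times-exponentially-small-density bound for generic primitive subgroups, and the case-by-case analysis of the large primitive families — yields $\nu_n(S_n)\to 0$. Since this is precisely Theorem~\ref{th-lp-2}, quoted from \cite{luczak-pyber}, in our text I would simply cite that paper for the full argument and record here only this outline of why the estimate holds.
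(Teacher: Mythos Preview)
Your conclusion matches the paper exactly: the paper gives no proof of this theorem at all --- it is simply stated, attributed to \L uczak and Pyber, and cited from~\cite{luczak-pyber}, with the corollary following immediately. Your instinct to ``simply cite that paper for the full argument'' is precisely what the authors do.

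The outline you supply is a reasonable sketch of the standard strategy (split into imprimitive wreath products and primitive groups, bound the primitive contribution via O'Nan--Scott plus order bounds of Babai/Mar\'oti type), and it is broadly in the spirit of what \L uczak and Pyber do, but note that the paper itself includes no such outline. One small imprecision in your sketch: when you write $\nu_n(S_n)\leq\sum_H\nu_n(H)$ with $H$ running over \emph{conjugacy classes} of maximal transitive subgroups, you need either $\nu_n(H^{\sharp})$ (the density of the union of conjugates) on the right, or to sum over all subgroups rather than conjugacy classes; as written the bound undercounts. This does not affect the overall strategy, which remains sound.
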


\begin{corollary}\label{cor-partial-alt}
For $n\geq 1$ and $1\leq i< n/2$, let
$$
H_{i,n}=\big\{g\in\sy_n\,\mid\, g\cdot \{1,\ldots,
i\}=\{1,\ldots, i\} \big\}
$$
be the subgroup of $\sy_n$ leaving $\{1,\ldots, i\}$
invariant. Let $H'_{i,n}=H_{i,n}\cap A_n$. Then the $H_{i,n}$ --
resp. $H'_{i,n}$ -- are maximal subgroups of $\sy_n$ --
resp. $A_n$ --. Moreover, let
\begin{gather*}
M_n=\{A_n\}\cup \{H_{i,n}\,\mid\, 1\leq i < n/2\}\subset
\max(\sy_n), \\
M'_n=\{H'_{i,n}\,\mid\, 1\leq i< n/2\}\subset \max(A_n).
\end{gather*}
\par
% {\color{red} I have removed $H_{n/2,n}$ from $M_n$ and $M_n'$ since it is not maximal; it is contained in the larger group generated by $H_{n/2,n}$ and a permutation that maps $\{1,\ldots, n/2\}$ to $\{n/2+1,\ldots, n\}$ (in the $A_n$ case, you need to choose this permutation to have even sign).}
As in Proposition~\ref{pr-general-estimates}, let $\tilde{\tau}_n$,
resp. $\tilde{\tau}'_n$, be the waiting time before conjugacy classes
in each subgroup of $M_n$, resp. $M'_n$, has been observed. Then we
have
$$
\cheb{\sy_n}=\expect(\tilde{\tau}_n)+o(1),\quad\quad
\scheb{\sy_n}=\expect(\tilde{\tau}_n^2)+o(1),
$$
as $n\ra +\infty$, and similarly
$$
\cheb{A_n}=\expect(\tilde{\tau}'_n)+o(1),\quad\quad
\scheb{A_n}=\expect((\tilde{\tau}'_n)^2)+o(1).
$$
\end{corollary}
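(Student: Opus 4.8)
The plan is to derive the whole statement from Corollary~\ref{cor-asymptotic}, applied with $G_n=\sy_n$ and the subset $M_n\subset\max(\sy_n)$ as defined (respectively with $G_n=A_n$ and $M_n=M'_n$). The work then splits into two parts: first checking that the subgroups $H_{i,n}$ and $H'_{i,n}$ really are maximal, and secondly verifying the hypothesis~(\ref{eq-cond-asymp}) of that corollary, namely that the proportion of elements of $\sy_n$ (resp.\ $A_n$) conjugate into some maximal subgroup \emph{not} lying in $M_n$ (resp.\ $M'_n$) tends to~$0$.

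For the maximality, the fact that $H_{i,n}\simeq\sy_i\times\sy_{n-i}$ is a maximal subgroup of $\sy_n$ whenever $1\leq i<n/2$, and that $H'_{i,n}=H_{i,n}\cap A_n$ is maximal in $A_n$ under the same condition (once $n$ is large enough that no small-degree exceptions occur), is classical, and I would simply invoke the known description of the intransitive maximal subgroups of the symmetric and alternating groups. What deserves a line is the converse, that these exhaust the intransitive maximal subgroups: if $K\leq\sy_n$ is intransitive, choose a $K$-orbit $\Omega$ of minimal size $i$; since $K$ has at least two orbits, $i\leq n/2$. If $i<n/2$, then $K$ lies inside the proper subgroup $\sy_\Omega\times\sy_{\Omega^c}$, so if $K$ is maximal it is conjugate to $H_{i,n}$; if $i=n/2$, then $K$ has exactly two orbits of size $n/2$ and sits inside $\sy_{n/2}\times\sy_{n/2}\subsetneq\sy_{n/2}\wr\sy_2\subsetneq\sy_n$, so $K$ is not maximal. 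The same argument inside $A_n$ shows the intransitive maximal subgroups of $A_n$ are precisely the $H'_{i,n}$, $1\leq i<n/2$. In particular, since $A_n\in M_n$, every $\Hc\in\max(\sy_n)-M_n$ is the conjugacy class of a transitive maximal subgroup of $\sy_n$ distinct from $A_n$, and likewise every $\Hc\in\max(A_n)-M'_n$ is the class of a transitive (proper) subgroup of $A_n$.

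For the density condition I would use Theorem~\ref{th-lp-2}. If $g\in\sy_n$ is conjugate into some $\Hc\in\max(\sy_n)-M_n$, then $g$ lies in a conjugate $H^x$ of a transitive maximal subgroup $H\neq A_n$, and $H^x$ is again transitive and $\neq A_n$; hence $g\in S_n$, with $S_n$ as in Theorem~\ref{th-lp-2}. Therefore
\[
\nu_n\Bigl(\bigcup_{\Hc\in\max(\sy_n)-M_n}{\Hc^{\sharp}}\Bigr)\leq\nu_n(S_n)\to 0 ,
\]
so the hypothesis of Corollary~\ref{cor-asymptotic} is met and we obtain the stated formulas for $\cheb{\sy_n}$ and $\scheb{\sy_n}$. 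For $A_n$, the same reasoning shows that the elements of $A_n$ conjugate into some $\Hc\in\max(A_n)-M'_n$ all lie in $S_n\cap A_n$, and since $|A_n|=\frac{1}{2}|\sy_n|$ the density of this set inside $A_n$ is at most $2\nu_n(S_n)\to 0$; Corollary~\ref{cor-asymptotic} applied with $G_n=A_n$ then finishes the proof (for $n$ large enough that $M'_n\neq\emptyset$, the small values of $n$ being irrelevant to an asymptotic statement).

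The only genuine obstacle is hidden in Theorem~\ref{th-lp-2}: everything above is routine bookkeeping around the classification of intransitive maximal subgroups and the excluded degree $i=n/2$, whereas the estimate $\nu_n(S_n)\to 0$ is the deep input of \L uczak and Pyber on which the whole corollary rests.
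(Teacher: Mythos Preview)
Your proof is correct and follows essentially the same approach as the paper: identify the maximal subgroups outside $M_n$ as the transitive ones distinct from $A_n$, bound the relevant union of conjugacy classes by $S_n$, and then invoke Theorem~\ref{th-lp-2} together with Corollary~\ref{cor-asymptotic}. You supply more detail than the paper does (in particular the exclusion of $i=n/2$ via the wreath product and the explicit treatment of the $A_n$ case with the factor~$2$), but the underlying argument is identical.
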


\begin{proof}
  It is known that the $H_{i,n}$ are (representatives of) the
  conjugacy classes of maximal intransitive subgroups of
  $\sy_n$. Thus, we find by definition of $S_n$ that
$$
\bigcup_{\Hc\in\max(\sy_n)-M_n}{\Hc^{\sharp}}=S_n,
$$
and hence the result follows immediately from
Corollary~\ref{cor-asymptotic} and Theorem~\ref{th-lp-2}, which
provides us with the assumption~(\ref{eq-cond-asymp}) required.
\end{proof}

In particular, in approaching Question~\ref{q-limit}, it is enough to
consider the expectations and second moment of the random variables
$\tilde{\tau}_n$ and $\tilde{\tau}'_n$. Those are combinatorially
simpler, or at least more explicit.
% \footnote{\ However, they do not
%   have one feature which would have been very useful: their
%   expectations are not monotonic, as numerical experiments reveal.} 
\par
In particular, note the following: an element $\sigma\in
\mathfrak{S}_n$ is conjugate to an element of $H_{i,n}\subset
\mathfrak{S}_n$ \emph{if and only if}, when expressed as a product of
disjoint cycles of lengths $\ell_j(\sigma)\geq 1$, $1\leq j\leq
\varpi(\sigma)$, say, has the property that a sum of a subset of the
lengths is equal to $i$: for some $J\subset \{1,\ldots,
\varpi(\sigma)\}$, we have
$$
\sum_{j\in J}{\ell_j(\sigma)}=i.
$$
\par
Note also that this applies equally to an element $\sigma$ in $A_n$:
the element is conjugate to $H'_{i,n}\subset A_n$ if and only if the
property above is true for its cycle lengths computed in
$\mathfrak{S}_n$ (although these cycle lengths do not always
characterize the conjugacy class of $\sigma$ in $A_n$).
\par
In particular, conjugacy classes
$(\sigma_1^{\sharp},\ldots,\sigma_k^{\sharp})$ in $\sy_n^{\sharp}$ (or
$A_n^{\sharp}$) generate a transitive subgroup of $\sy_n$ (or $A_n$)
\emph{if and only} if $n$ (which always occurs as the sum of all
lengths) is the only common such sum for all $\sigma_j$. (Indeed, if
$i<n$ occurs as a common subsum, we can assume that $i\leq n/2$, and
then it is possible to select elements in each conjugacy class which
all belong to $H_{i,n}$, so that the conjugacy classes can not
generate invariably a transitive subgroup; the converse is also
simple.)
\par
Now, we come to the model when $n\ra +\infty$. The distribution of the
set of lengths of random permutations is a well-studied subject in
probabilistic group theory, and this allows us to make a guess as to
the existence and value of the limit.
\par
Indeed, for $i\geq 1$, consider the map
$$
\varpi_i\,:\, \sy_n\ra \{0,1,\ldots\}
$$
sending $\sigma$ to the number of cycles of length $i$ in its
decomposition as product of disjoint cycles (for $i=1$, this is the
number of fixed points; for $i\geq n+1$, of course, this is zero, but
it will be convenient for the asymptotic study to allow arbitrary
$i$).  Now consider, for each $n\geq 1$, any random variables $s_n$,
$\sigma_n$ uniformly distributed on $\sy_n$ and $A_n$,
respectively. Then the following is a consequence of well-known
results dating back to Goncharov~\cite{goncharov}:
%%CHECK
for fixed $i$, as $n\ra +\infty$, the random variables
$\varpi_i(\sigma_n)$ converge in law to a Poisson random variable with
parameter $1/i$, i.e., we have
\begin{equation}\label{eq-goncharov-alternating}
\lim_{n\ra +\infty}{\proba(\varpi_i(\sigma_n)=k)}=
e^{-1/i}\frac{1}{k!i^k},\quad\quad \text{ for fixed $k\geq 0$.}
\end{equation}
\par
Moreover, the limits for distinct values of $i$ are independent, i.e.,
for any fixed finite set $I$ of positive integers, we have
$$
\lim_{n\ra +\infty}{\proba(\varpi_i(\sigma_n)=k_i\text{ for all } i\in
  I)}=
\prod_{i\in I}{e^{-1/i}\frac{1}{i^{k_i}k_i!}}.
$$
\par
More precisely, this is proved (and much more precise results) for
symmetric groups in, e.g.,~\cite[Th.1 ]{arratia-tavare}
or~\cite[Th. 1.3]{abt}. The case of alternating groups can be deduced
from this using the fact that the indicator function of $A_n$ in
$\sy_n$ is given in terms of cycle-lengths by
$$
\frac{1+(-1)^{\varpi_2+\varpi_4+\cdots}}{2}.
$$
\par
For instance, for fixed $j$, the characteristic function of
$\varpi_{k}(\sigma_n)$ is 
$$
\expect(e^{it\varpi_k(\sigma_n)})=
\expect(e^{it\varpi_k(s_n)})+
\expect((-1)^{\sum_{j}{\varpi_{2j}(s_n)}}e^{it\varpi_k(s_n)}).
$$
\par
By Goncharov's result, the first term converges for every $t\in\Rr$ to
the desired characteristic function of a Poisson variable with
parameter $1/k$; for the second term, we can use the method of Lloyd
and Shepp~\cite[\S 2]{lloyd-shepp}. Assuming $k=2k'$ is even (the
other case being similar), one finds (see in
particular~\cite[(3)]{lloyd-shepp}) that the expectation over $\sy_n$
is the coefficient of $z^{n}$ in
$$
\frac{1}{1-z}
\exp\Bigl(\frac{z^{k}}{k}(e^{i(t+\pi)}-1)\Bigr)
\prod_{\stacksum{j\geq 1}{j\not=k'}}{\exp(-z^{2j}/j)}
=(1+z)\exp\Bigl(\frac{z^{k}}{k}(1-e^{it})\Bigr),
$$
and since this function (of $z\in\Cc$) is regular at $z=1$, those
coefficients converge to $0$ for every fixed $t$. This computation
proves~(\ref{eq-goncharov-alternating}).
\par
It seems therefore reasonable to use a model of Poisson variables to
predict the limit of Chebotarev invariants of alternating groups.  For
this purpose, let $\mathcal{A}$ be the set of sequences
$(\ell_i)_{i\geq 1}$ of non-negative integers; we denote the $i$-th
component of $\ell\in \mathcal{A}$ by $\varpi_i(\ell)$. Let
$\nu_{\mathcal{A}}$ be the infinite product (probability) measure on
$\mathcal{A}$ such that the $i$-th component $\ell_i$ is distributed
like a Poisson random variable with parameter $1/i$. This set
$\mathcal{A}$ is meant to be like the set of conjugacy classes of an
infinite symmetric group, and indeed, from the above, we see that for
any finite $I$ of positive integers and any $k_i\geq 0$ defined for
$i\in I$, we have
$$
\lim_{n\ra +\infty}{\proba(\varpi_i(\sigma_n)=k_i\text{ for all } i\in
  I)}=\nu_{\mathcal{A}}(\{\ell\in\mathcal{A}\,\mid\, \varpi_i(\ell)=k_i,\ i\in I\}).
$$
\par
Now consider an infinite sequence $(X_k)_{k\geq 1}$ of $\mathcal{A}$-valued,
independent random variables, identically distributed according to
$\nu$. We look at the following waiting time: 
$$
\tau_{\mathcal{A}}=\min\{k\geq 1\,\mid\, \bigcap_{1\leq j\leq
  k}{S(X_j)} =\{+\infty\}\},
$$
where, for $\ell\in\mathcal{A}$, we denote by $S(\ell)\subset \{0,1,2,\ldots,
\}\cup\{+\infty\}$ the set of all sums
$$
\sum_{i\geq 1}{ib_i},\quad\quad \text{ where } 0\leq b_i\leq
\varpi_i(\ell)
$$
(note the usual shift of notation from our description of the case of
fixed $n$: the sequence of lengths of cycles occuring in a permutation
is replaced by the sequence of multiplicities of each possible
length). Then our guess for the limit of $\cheb{A_n}$ is that
$$
\lim_{n\ra +\infty}{\cheb{A_n}}=\expect(\tau_A).
$$
\par
We hope to come back to this  question in a future work.

\section{Non-abelian groups: numerical experiments}\label{sec-ex2}

In this section, we give some tables of values of the Chebotarev
invariant (and the secondary invariant) for some non-abelian finite
groups.  Although those are clearly rational numbers, we list real
approximations only because the ``height'' of those rationals grows
very fast.
\par
The computations are feasible even for fairly large and complicated
non-abelian groups, because they may have few conjugacy classes of
maximal subgroups, and not too many conjugacy classes. For instance,
the Weyl group $W(E_8)$ (one of our motivating examples) has $9$
conjugacy classes of maximal subgroups, and $112$ conjugacy
classes. If these data are available to suitable software packages,
Proposition~\ref{pr-comput} provides a way to compute the Chebotarev
invariants, though this is at best an exponential-time algorithm (due
to the necessity to sum over all subsets of $\max(G)$).
\par
The computations here were done for the most part with \textsc{Magma}
(see~\cite{magma}), using the script included in the Appendix. The
correctness of the results was checked partly by independent
computations with the open-source package \textsc{GAP}
(see~\cite{gap}), and by checking that the results agree, for cyclic
groups and groups $\Fp_p^k$, with the theoretical formulas of the
Section~\ref{sec-ex1}. They are also in good agreement, in the case of
$PSL(2,\Fp_p)$, with the asymptotic result of
Section~\ref{sec-non-ab-2}. Hence, altogether, we have very high
confidence in these values.
\par
The computations were relatively fast; usually there was a sharp
threshold between computing for one group in a family in less than an
hour, and the next one proving infeasible due to the exponential
growth of the number of subsets of $\max(G)$. As an indication of
timing, the computation for $PSL(6,\Fp_3)$ with \textsc{Magma}
(version 2.14.15) took about $42$ seconds on a $2.5$ GHz Core 2
processor.
\par
Below, we include tables for the alternating groups $A_n$, for the
symmetric groups $\sy_n$, for the groups $PSL(2,\Fp_p)$ with $p$ prime
$\leq 150$ (though the computations can be done for $p$ quite a bit
larger, we do not include the results which are not particularly
enlightening), for $PSL(3,\Fp_p)$, $PSL(4,\Fp_p)$, $PSL(n,\Fp_2)$,
$PSL(n,\Fp_3)$, $PSL(n,\Fp_4)$, $PSL(2,\Fp_{2^n})$,
$PSL(3,\Fp_{2^n})$, $Sp(2g,\Fp_3)$. (Note that, in general, the
computations tend to run quite a bit faster for simple groups.) We
also include a table of the ``partial'' invariants
$\expect(\tilde{\tau}'_n)$ and $\expect((\tilde{\tau}'_n)^2)$ of
alternating groups defined in Corollary~\ref{cor-partial-alt}. Note
that although we have shown that these are asymptotically converging
to the Chebotarev invariants themselves, the convergence is by no
means visible! There is also a table for the Borel subgroup of
$SL(3,\Fp_p)$, namely
$$
B_3(\Fp_p)=\Bigl\{
\begin{pmatrix}
x & r & s\\
0 & y & t\\
0 & 0 & z
\end{pmatrix}\,\mid\, 
(r,s,t)\in \Fp_p^3,\ 
(x,y,z)\in (\Fp_p^{\times})^3,\ xyz=1
\Bigr\},
$$
as another example of a solvable group. 
\par
Another table lists some more ``sporadic'' groups; the names of those
groups in the table should be self-explanatory. For instance, $D_{2n}$
is the dihedral group of order $2n$, $W(R)$ denotes the Weyl group of
a root system of type $R$; $Sz(8)$ and $Sz(32)$ are Suzuki
groups. Sporadic simple groups are named in a standard way:
\begin{itemize}
\item Mathieu groups: $M_{n}$, where $n\in \{11,12,22,23,24\}$;
\item Janko groups: $J_k$, where $k\in \{1,2,3,4\}$;
\item Second and thirs Conway groups: $Co_2$, $Co_3$ (the first Conway
  group is too big);
\item Tits group $T$;
\item MacLaughlin group $McL$;
\item Higman-Sims group $HS$;
\item Helde group $He$.
\end{itemize}
\par
The group $Rub$ at the end of the table is the Rubik's group (the
subgroup of $\sy_{48}$ that gives the possible moves on the Rubik's
Cube; computing $\cheb{Rub}$ takes about two days on a fast Opteron;
this group has $20$ conjugacy classes of maximal subgroups and $81120$
conjugacy classes). In order to ease checking, the url
\begin{center}
\url{http://www.math.ethz.ch/~kowalski/other-groups.mgm}
\end{center}
contains a \textsc{Magma} file where each group in this list is
constructed explicitly.
\par
It also possible to exploit the databases of small groups, or of
transitive groups, or primitive groups, to compute the Chebotarev
invariants for, say, all groups of a given small order (up to
isomorphism), or for all transitive permutations groups of small
degree. The latter is of course particularly interesting from the
point of view of Galois theory, and the groups $\Fp_q\rtimes
\Fp_q^{\times}$ which appear as transitive permutation groups of
degree $q$ (and in Galois theory as Galois groups of Kummer extensions
of prime-power degree, i.e., splitting fields of polynomials of the
type $X^q-a$) are very noticeable, having much higher Chebotarev
invariants than the other groups despite their rather small order (see
the example in the table for transitive groups of degree $17$ --
noting that the group with Chebotarev invariant roughly $8.88$ is the
index $2$ subgroup of $H_{17}$ denoted $C_2$ in
Section~\ref{sec-solvable}). We include a figure of the empirical
distribution of values for the Chebotarev waiting time for $H_{31}$
(chosen because $q-1=30$ has ``many'' divisors).
\par
We also include a figure with an histogram showing the distribution of
the Chebotarev invariant for the $840$ distinct groups of order $720$
(up to isomorphism). Note that this data also indicates that the
invariant is far from injective (as can be guessed from its dependency
on relatively little data): there are only $188$ distinct values of
$\cheb{G}$ for $|G|=720$; 
the value
$$
\frac{469589438194474533813031879}{80462083849550829871525080}\simeq
5.836158\ldots.
$$
occurs with maximal multiplicity (it arises $39$ times).
% where the denominator factors as
% $$
% 2^3\cdot 3^2\cdot 5\cdot 11\cdot
% 13\cdot 17\cdot 19\cdot 23\cdot 29\cdot 31\cdot 37\cdot 43\cdot
% 59\cdot 71\cdot 89\cdot 157\cdot 359.
% $$
\par
Note that for simple groups (or groups which are nearly so), the
relation between $\cheb{G}$ and $\scheb{G}$ seems relatively regular,
but there is certainly no strict monotony in terms of the order; see,
e.g., the cases of alternating groups $A_n$, where sorting according
to the value of $\cheb{A_n}$ leads to the
following rather bizarre ordering of the segment $2\leq n\leq 21$:
$$
2, 3, 13, 19, 17, 11, 5, 10, 14, 20, 21, 16, 18, 15, 4, 6, 12, 9, 7, 8\ ;
$$
the ordering with respect to $\scheb{A_n}$ is slightly different,
namely:
$$
2, 3, 13, 19, 11, 17, 10, 14, 21, 20, 16,18, 15, 5,12,6,9,4,7,8.
$$
\par
And the orderings for $\cheb{\sy_n}$ and
$\scheb{\sy_n}$ are also different:
$$
2,3,7,11,13,9,17,19,5,15,21,16,20,4,14,18,12,10,8,6,
$$
and
$$
2,3,7,11,13,9,17,19,15,5,21,16,20,14,18,12,8,10,4,6,
$$
respectively. Note however that in Table~2, if we fix the parity of
$n$, the invariants $\expect(\tilde{\tau}'_{2n})$ and
$\expect(\tilde{\tau}'_{2n+1})$ seem monotonically increasing. This
indicates that they are indeed very natural objects to study.

% {\color{red} I know we don't want to make any conjecture, but I still think it is worthwhile to point out the pattern in Table 2 when you fix the parity of $n$.  It emphasizes that $\expect(\tilde{\tau}'_n)$ is the more natural object to study.}

\par
\begin{table}[p]
\centering
\caption{Chebotarev invariants of $A_n$}
\begin{tabular}{c|c|c|c}
$n$ & Order & $\cheb{A_n}$ & $\scheb{A_n}$ \\
\hline
$2$ & 1 & 1.000000\ldots & 1.000000\ldots\\ % 0.000
$3$ & 3 & 1.500000\ldots & 3.000000\ldots\\ % 0.000
$4$ & 12 & 4.409091\ldots & 29.71074\ldots\\ % 0.000
$5$ & 60 & 4.136364\ldots & 22.64463\ldots\\ % 0.000
$6$ & 360 & 4.439574\ldots & 25.49003\ldots\\ % 0.000
$7$ & 2520 & 4.782001\ldots & 29.98671\ldots\\ % 0.020
$8$ & 20160 & 4.939097\ldots & 31.98434\ldots\\ % 0.050
$9$ & 181440 & 4.637463\ldots & 26.35009\ldots\\ % 0.050
$10$ & 1814400 & 4.145282\ldots & 21.73709\ldots\\ % 0.070
$11$ & 19958400 & 4.092974\ldots & 21.08692\ldots\\ % 0.100
$12$ & 239500800 & 4.444074\ldots & 24.14188\ldots\\ % 0.290
$13$ & 3113510400 & 4.016324\ldots & 20.51475\ldots\\ % 0.280
$14$ & 43589145600 & 4.212753\ldots & 22.16514\ldots\\ % 0.500
$15$ & 653837184000 & 4.289698\ldots & 22.51291\ldots\\ % 0.840
$16$ & 10461394944000 & 4.239141\ldots & 22.21416\ldots\\ % 1.690
$17$ & 177843714048000 & 4.089704\ldots & 21.12890\ldots\\ % 1.800
$18$ & 3201186852864000 & 4.248133\ldots & 22.38035\ldots\\ % 4.410
$19$ & 60822550204416000 & 4.072274\ldots & 21.08656\ldots\\ % 4.090
$20$ & 1216451004088320000 & 4.229094\ldots & 22.20516\ldots\\ % 12.880
$21$ & 25545471085854720000 & 4.238026\ldots & 22.19523\ldots\\ % 16.690
$22$ & 562000363888803840000 & 4.240513\ldots & 22.33370\ldots\\ % 23.580
$23$ & 12926008369442488320000 & 4.131077\ldots & 21.54514\ldots\\ % 36.240
$24$ & 310224200866619719680000 & 4.282667\ldots & 22.58460\ldots\\ % 404.260
\end{tabular}
\end{table}
\par
%%%%%%%%%%%%%%

\begin{table}[p]
\centering
\caption{``Partial'' Chebotarev invariants of $A_n$}
\begin{tabular}{c|c|c|c}
$n$ & Order & $\expect(\tilde{\tau}'_n)$ & $\expect((\tilde{\tau}'_n)^2)$ \\
\hline
$3$ & 3 & 1.500000\ldots & 3.000000\ldots\\ % 0.000
$4$ & 12 & 2.123377\ldots & 5.874009\ldots\\ % 0.000
$5$ & 60 & 2.500000\ldots & 10.00000\ldots\\ % 0.000
$6$ & 360 & 2.649424\ldots & 9.187574\ldots\\ % 0.000
$7$ & 2520 & 3.243247\ldots & 16.47701\ldots\\ % 0.020
$8$ & 20160 & 2.812743\ldots & 10.71084\ldots\\ % 0.050
$9$ & 181440 & 3.133704\ldots & 13.97383\ldots\\ % 0.050
$10$ & 1814400 & 3.115450\ldots & 13.08967\ldots\\ % 0.070
$11$ & 19958400 & 3.399573\ldots & 15.88920\ldots\\ % 0.100
$12$ & 239500800 & 3.225496\ldots & 14.16483\ldots\\ % 0.290
$13$ & 3113510400 & 3.402011\ldots & 15.56383\ldots\\ % 0.280
$14$ & 43589145600 & 3.357361\ldots & 15.13742\ldots\\ % 0.500
$15$ & 653837184000 & 3.504050\ldots & 16.37350\ldots\\ % 0.840
$16$ & 10461394944000 & 3.385358\ldots & 15.32752\ldots\\ % 1.690
$17$ & 177843714048000 & 3.544719\ldots & 16.55867\ldots\\ % 1.800
$18$ & 3201186852864000 & 3.497980\ldots & 16.21775\ldots\\ % 4.410
$19$ & 60822550204416000 & 3.625919\ldots & 17.22183\ldots\\ % 4.090
$20$ & 1216451004088320000 & 3.530703\ldots & 16.46076\ldots\\ % 12.880
\end{tabular}
\end{table}
%%%%%%%%%%%%% 

\begin{table}[p]
\centering
\caption{Chebotarev invariants of $\sy_n$}
\begin{tabular}{c|c|c|c}
$n$ & Order & $\cheb{\sy_n}$ & $\scheb{\sy_n}$ \\
\hline
$2$ & 2 & 2.000000\ldots & 6.000000\ldots\\ % 0.000
$3$ & 6 & 3.800000\ldots & 19.32000\ldots\\ % 0.000
$4$ & 24 & 4.498380\ldots & 25.91538\ldots\\ % 0.010
$5$ & 120 & 4.331526\ldots & 23.50351\ldots\\ % 0.000
$6$ & 720 & 5.610738\ldots & 37.63260\ldots\\ % 0.020
$7$ & 5040 & 4.115230\ldots & 21.20184\ldots\\ % 0.010
$8$ & 40320 & 4.626289\ldots & 25.71722\ldots\\ % 0.110
$9$ & 362880 & 4.250355\ldots & 22.49197\ldots\\ % 0.050
$10$ & 3628800 & 4.624666\ldots & 25.76898\ldots\\ % 0.090
$11$ & 39916800 & 4.173683\ldots & 21.86294\ldots\\ % 0.120
$12$ & 479001600 & 4.583705\ldots & 25.11338\ldots\\ % 0.360
$13$ & 6227020800 & 4.213748\ldots & 22.21319\ldots\\ % 0.340
$14$ & 87178291200 & 4.508042\ldots & 24.57963\ldots\\ % 0.670
$15$ & 1307674368000 & 4.365718\ldots & 23.39257\ldots\\ % 1.160
$16$ & 20922789888000 & 4.461633\ldots & 24.12713\ldots\\ % 2.110
$17$ & 355687428096000 & 4.282141\ldots & 22.79488\ldots\\ % 2.430
$18$ & 6402373705728000 & 4.531784\ldots & 24.67680\ldots\\ % 9.680
$19$ & 121645100408832000 & 4.308469\ldots & 23.01145\ldots\\ % 6.620
$20$ & 2432902008176640000 & 4.497047\ldots & 24.37207\ldots\\ % 30.210
$21$ & 51090942171709440000 & 4.391209\ldots & 23.61488\ldots\\ % 53.200
$22$ & 1124000727777607680000 & 4.477492\ldots & 24.29632\ldots\\ % 43.070
$23$ & 25852016738884976640000 & 4.352364\ldots & 23.37533\ldots\\ % 52.740
$24$ & 620448401733239439360000 & 4.523388\ldots & 24.57409\ldots\\ % 843.460
\end{tabular}
\end{table}

\begin{table}[p]
\centering
\caption{Chebotarev invariants of transitive groups of degree $17$}
\begin{tabular}{c|c|c|c}
  Name & Order & $\cheb{G}$ & $\scheb{G}$ \\
  \hline
  $\Zz/17\Zz$ & 17 & 1.062500\ldots & 1.195312\ldots\\ % 0.000
  $C_8\subset H_{17}$ & 34 & 3.094697\ldots & 11.81350\ldots\\ % 0.010
  $C_4\subset H_{17}$ & 68 & 4.890000\ldots & 35.53580\ldots\\ % 0.000
  $C_2\subset H_{17}$  & 136 & 8.880953\ldots & 138.3764\ldots\\ % 0.000
  $H_{17}$ & 272 & 17.21053\ldots & 562.3851\ldots\\ % 0.000
  $PSL(2,\Fp_{16})$ & 4080 & 3.200912\ldots & 12.73727\ldots\\ % 0.020
  $7$ & 8160 & 4.055261\ldots & 20.84364\ldots\\ % 0.030
  $8$ & 16320 & 4.067118\ldots & 20.58582\ldots\\ % 0.070
  $A_{17}$ & 177843714048000 & 4.089704\ldots & 21.12890\ldots\\ % 1.870
  $\sy_{17}$ & 355687428096000 & 4.282141\ldots & 22.79488\ldots\\ % 2.430
\end{tabular}
\end{table}

\begin{table}[p]
\centering
\caption{Chebotarev invariants of $PSL(3,\Fp_p)$}
\begin{tabular}{c|c|c|c}
$p$ & Order & $\cheb{PSL(3,\Fp_p)}$ & $\scheb{PSL(3,\Fp_p)}$ \\
\hline
$2$ & 168 & 4.653153\ldots & 29.48762\ldots\\ % 0.010
$3$ & 5616 & 3.845890\ldots & 20.67132\ldots\\ % 0.020
$5$ & 372000 & 3.629464\ldots & 18.36114\ldots\\ % 0.050
$7$ & 1876896 & 3.661481\ldots & 18.91957\ldots\\ % 0.100
$11$ & 212427600 & 3.527819\ldots & 17.29354\ldots\\ % 0.940
$13$ & 270178272 & 3.546344\ldots & 17.55063\ldots\\ % 0.540
$17$ & 6950204928 & 3.511708\ldots & 17.12456\ldots\\ % 11.160
$19$ & 5644682640 & 3.521753\ldots & 17.25893\ldots\\ % 3.580
$23$ & 78156525216 & 3.506462\ldots & 17.06878\ldots\\ % 65.750
$29$ & 499631102880 & 3.504076\ldots & 17.04348\ldots\\ % 214.680
$31$ & 283991644800 & 3.508213\ldots & 17.09800\ldots\\ % 49.490
$37$ & 1169948144736 & 3.505795\ldots & 17.06906\ldots\\ % 122.340
$41$ & 7980059337600 & 3.502051\ldots & 17.02191\ldots\\ % 2030.110
\end{tabular}
\end{table}

\begin{table}[p]
\centering
\caption{Chebotarev invariants of $PSL(4,\Fp_p)$}
\begin{tabular}{c|c|c|c}
$p$ & Order & $\cheb{PSL(4,\Fp_p)}$ & $\scheb{PSL(4,\Fp_p)}$ \\
\hline
$2$ & 20160 & 4.939097\ldots & 31.98434\ldots\\ % 0.050
$3$ & 6065280 & 4.191257\ldots & 23.35082\ldots\\ % 0.200
$5$ & 7254000000 & 3.768197\ldots & 18.89633\ldots\\ % 1.370
$7$ & 2317591180800 & 3.613602\ldots & 17.31973\ldots\\ % 8.190
$11$ & 2069665112592000 & 3.530797\ldots & 16.44109\ldots\\ % 296.760
$13$ & 12714519233969280 & 3.513963\ldots & 16.24990\ldots\\ % 477.530
\end{tabular}
\end{table}

\begin{table}[p]
\centering
\caption{Chebotarev invariants of $PSL(n,\Fp_2)$}
\begin{tabular}{c|c|c|c}
$n$ & Order & $\cheb{PSL(n,\Fp_2)}$ & $\scheb{PSL(n,\Fp_2)}$ \\
\hline
$2$ & 6 & 3.800000\ldots & 19.32000\ldots\\ % 0.000
$3$ & 168 & 4.653153\ldots & 29.48762\ldots\\ % 0.000
$4$ & 20160 & 4.939097\ldots & 31.98434\ldots\\ % 0.060
$5$ & 9999360 & 4.238182\ldots & 25.64374\ldots\\ % 0.100
$6$ & 20158709760 & 4.456089\ldots & 27.20052\ldots\\ % 0.790
$7$ & 163849992929280 & 4.335957\ldots & 26.54874\ldots\\ % 3.630
$8$ & 5348063769211699200 & 4.465723\ldots & 27.53266\ldots\\ % 45.430
$9$ & 699612310033197642547200 & 4.460433\ldots & 27.64706\ldots\\ % 1029.620
\end{tabular}
\end{table}

\begin{table}[p]
\centering
\caption{Chebotarev invariants of $PSL(n,\Fp_3)$}
\begin{tabular}{c|c|c|c}
$n$ & Order & $\cheb{PSL(n,\Fp_3)}$ & $\scheb{PSL(n,\Fp_3)}$ \\
\hline
$2$ & 12 & 4.409091\ldots & 29.71074\ldots\\ % 0.010
$3$ & 5616 & 3.845890\ldots & 20.67132\ldots\\ % 0.010
$4$ & 6065280 & 4.191257\ldots & 23.35082\ldots\\ % 0.190
$5$ & 237783237120 & 3.949889\ldots & 21.81110\ldots\\ % 1.720
$6$ & 21032402889738240 & 4.123378\ldots & 23.06449\ldots\\ % 42.510
$7$ & 67034222101339041669120 & 4.066340\ldots & 22.81370\ldots\\ % 900.190
\end{tabular}
\end{table}

\begin{table}[p]
\centering
\caption{Chebotarev invariants of $PSL(n,\Fp_4)$}
\begin{tabular}{c|c|c|c}
$n$ & Order & $\cheb{PSL(n,\Fp_4)}$ & $\scheb{PSL(n,\Fp_4)}$ \\
\hline
$2$ & 60 & 4.136364\ldots & 22.64463\ldots\\ % 0.000
$3$ & 20160 & 4.399979\ldots & 26.39681\ldots\\ % 0.060
$4$ & 987033600 & 3.770618\ldots & 19.19928\ldots\\ % 0.780
$5$ & 258492255436800 & 3.838194\ldots & 20.33428\ldots\\ % 14.640
$6$ & 361310134959341568000 & 4.002927\ldots & 21.57223\ldots\\ % 374.580
\end{tabular}
\end{table}

\begin{table}[p]
\centering
\caption{Chebotarev invariants of $PSL(2,\Fp_{2^n})$}
\begin{tabular}{c|c|c|c}
$2^n$ & Order & $\cheb{PSL(2,\Fp_{2^n})}$ & $\scheb{PSL(2,\Fp_{2^n})}$ \\
\hline
$2$ & 6 & 3.800000\ldots & 19.32000\ldots\\ % 0.010
$4$ & 60 & 4.136364\ldots & 22.64463\ldots\\ % 0.000
$8$ & 504 & 3.437879\ldots & 14.95188\ldots\\ % 0.000
$16$ & 4080 & 3.200912\ldots & 12.73727\ldots\\ % 0.020
$32$ & 32736 & 3.096876\ldots & 11.82191\ldots\\ % 0.050
$64$ & 262080 & 3.048732\ldots & 11.40623\ldots\\ % 0.240
$128$ & 2097024 & 3.023623\ldots & 11.19773\ldots\\ % 3.080
$256$ & 16776960 & 3.011765\ldots & 11.09826\ldots\\ % 14.340
$512$ & 134217216 & 3.005965\ldots & 11.04945\ldots\\ % 154.420
\end{tabular}
\end{table}

\begin{table}[p]
\centering
\caption{Chebotarev invariants of $PSL(3,\Fp_{2^n})$}
\begin{tabular}{c|c|c|c}
$2^n$ & Order & $\cheb{PSL(3,\Fp_{2^n})}$ & $\scheb{PSL(3,\Fp_{2^n})}$ \\
\hline
$2$ & 168 & 4.653153\ldots & 29.48762\ldots\\ % 0.000
$4$ & 20160 & 4.399979\ldots & 26.39681\ldots\\ % 0.070
$8$ & 16482816 & 3.551417\ldots & 17.54363\ldots\\ % 0.390
$16$ & 1425715200 & 3.549690\ldots & 17.47208\ldots\\ % 1.950
$32$ & 1098404364288 & 3.503357\ldots & 17.03581\ldots\\ % 949.870
\end{tabular}
\end{table}

\begin{table}[p]
\centering
\caption{Chebotarev invariants of the Borel subgroup of $SL(3,\Fp_p)$}
\begin{tabular}{c|c|c|c}
$p$ & Order & $\cheb{B_3(\Fp_{p})}$ & $\scheb{B_3(\Fp_{p})}$ \\
\hline
$2$ & 8 & 3.333333\ldots & 13.55556\ldots\\ % 0.000
$3$ & 108 & 5.074442\ldots & 31.76009\ldots\\ % 0.020
$5$ & 2000 & 7.686557\ldots & 81.14365\ldots\\ % 0.090
$7$ & 12348 & 10.07528\ldots & 150.8724\ldots\\ % 1.380
$11$ & 133100 & 16.38777\ldots & 402.7223\ldots\\ % 2.820
$13$ & 316368 & 18.85106\ldots & 551.0363\ldots\\ % 5.370
$17$ & 1257728 & 25.31072\ldots & 978.0196\ldots\\ % 8.450
$19$ & 2222316 & 27.79352\ldots & 1204.483\ldots\\ % 17.180
$23$ & 5888828 & 34.28491\ldots & 1805.763\ldots\\ % 70.340
$29$ & 19120976 & 43.27249\ldots & 2885.634\ldots\\ % 66.110
$31$ & 26811900 & 45.75644\ldots & 3268.081\ldots\\ % 134.990
$37$ & 65646288 & 54.75057\ldots & 4678.007\ldots\\ % 148.610
$41$ & 110273600 & 61.26132\ldots & 5801.515\ldots\\ % 220.420
$43$ & 140250348 & 63.74680\ldots & 6339.956\ldots\\ % 481.420
\end{tabular}
\end{table}

\begin{table}[p]
\centering
\caption{Chebotarev invariants of $PSL(2,\Fp_p)$, $p\leq 150$}
\begin{tabular}{c|c|c|c}
$p$ & Order & $\cheb{PSL(2,\Fp_p)}$ & $\scheb{PSL(2,\Fp_p)}$ \\
\hline
$2$ & 6 & 3.800000\ldots & 19.32000\ldots\\ % 0.000
$3$ & 12 & 4.409091\ldots & 29.71074\ldots\\ % 0.000
$5$ & 60 & 4.136364\ldots & 22.64463\ldots\\ % 0.000
$7$ & 168 & 4.653153\ldots & 29.48762\ldots\\ % 0.010
$11$ & 660 & 3.981397\ldots & 20.76193\ldots\\ % 0.000
$13$ & 1092 & 3.293965\ldots & 13.63659\ldots\\ % 0.010
$17$ & 2448 & 3.264353\ldots & 13.20732\ldots\\ % 0.000
$19$ & 3420 & 3.259202\ldots & 13.08533\ldots\\ % 0.020
$23$ & 6072 & 3.136600\ldots & 12.18536\ldots\\ % 0.010
$29$ & 12180 & 3.115633\ldots & 11.99619\ldots\\ % 0.030
$31$ & 14880 & 3.111661\ldots & 11.92578\ldots\\ % 0.030
$37$ & 25308 & 3.088522\ldots & 11.75723\ldots\\ % 0.040
$41$ & 34440 & 3.098342\ldots & 11.78358\ldots\\ % 0.090
$43$ & 39732 & 3.071689\ldots & 11.61064\ldots\\ % 0.070
$47$ & 51888 & 3.065454\ldots & 11.55651\ldots\\ % 0.040
$53$ & 74412 & 3.060208\ldots & 11.51103\ldots\\ % 0.050
$59$ & 102660 & 3.051900\ldots & 11.43952\ldots\\ % 0.090
$61$ & 113460 & 3.051897\ldots & 11.43943\ldots\\ % 0.050
$67$ & 150348 & 3.045600\ldots & 11.38545\ldots\\ % 0.090
$71$ & 178920 & 3.046777\ldots & 11.38343\ldots\\ % 0.100
$73$ & 194472 & 3.042989\ldots & 11.36306\ldots\\ % 0.090
$79$ & 246480 & 3.043013\ldots & 11.34889\ldots\\ % 0.110
$83$ & 285852 & 3.036689\ldots & 11.30930\ldots\\ % 0.230
$89$ & 352440 & 3.036100\ldots & 11.30056\ldots\\ % 0.130
$97$ & 456288 & 3.031998\ldots & 11.26935\ldots\\ % 0.150
$101$ & 515100 & 3.032463\ldots & 11.26755\ldots\\ % 0.200
$103$ & 546312 & 3.030308\ldots & 11.25228\ldots\\ % 0.240
$107$ & 612468 & 3.028370\ldots & 11.23855\ldots\\ % 0.420
$109$ & 647460 & 3.029877\ldots & 11.24644\ldots\\ % 0.220
$113$ & 721392 & 3.028016\ldots & 11.23330\ldots\\ % 0.220
$127$ & 1024128 & 3.024393\ldots & 11.20309\ldots\\ % 0.410
$131$ & 1123980 & 3.024148\ldots & 11.19945\ldots\\ % 0.480
$137$ & 1285608 & 3.022889\ldots & 11.19063\ldots\\ % 0.430
$139$ & 1342740 & 3.022686\ldots & 11.18747\ldots\\ % 0.520
$149$ & 1653900 & 3.020586\ldots & 11.17269\ldots\\ % 0.660
\end{tabular}
\end{table}
\par

\begin{table}[p]
\centering
\caption{Chebotarev invariants of some other groups}
\begin{tabular}{c|c|c|c}
Name & Order & $\cheb{G}$ & $\scheb{G}$ \\
\hline
  $W(G_2)=D_{12}$ & 12 & 4.315\underline{15}\ldots= 717/165&23.45407\ldots\\
  $W(C_4)$& 384 &  4.864890\ldots & 29.10488\ldots\\
%%  $\Zz/6\Zz\times PSL(2,\Fp_7)$ & 1008 & 4.890689\ldots&31.42829\ldots\\
  $W(F_4)$ & 1152 & 5.417656\ldots&35.12470\ldots\\
  $GL(2,\Fp_7)$ & 2016 & 3.767768\ldots&17.29394\ldots\\
%  $\Omega(3,\Fp_{19})$ & 3420 & 3.259202\ldots&\\
  $A_5\times A_5$ & 3600 & 5.374156\ldots&35.41628\ldots\\
  $W(C_5)$ & 3840 &  4.863533\ldots & 28.13517\ldots \\
  $M_{11}$ & 7920 & 4.850698\ldots & 29.72918\ldots\\
  $GL(3,\Fp_3)$ & 11232 & 4.110394\ldots&22.77077\ldots\\
  $G_2(\Fp_2)$ & 12096 & 5.246204\ldots & 34.24515\ldots\\
%%  $SL(4,\Fp_2)=A_8$& 20160 & 4.939097\ldots&31.98434\ldots\\
  $Sz(8)$ & 29120 & 3.101639\ldots & 11.92233\ldots\\
  $W(C_6)$ & 46080& 5.792117\ldots & 39.56093\ldots\\
  $W(E_6)$ &51840& 4.470824\ldots&23.93050\ldots\\
  $Sp(4,\Fp_3)$ & 51840 & 4.401859\ldots&24.03143\ldots\\
  $PGL(3,\Fp_4)$ & 60480 & 3.763384\ldots & 19.49865\ldots\\
  $M_{12}$ & 95040 & 4.953188\ldots & 29.53947\ldots\\
  $J_1$ & 175560 & 3.423739\ldots & 14.76364\ldots\\
%%  $GL(2,\Fp_{23})$ & 267168 & 3.448700\ldots&14.55080\ldots\\
%%  $PSL(3,\Fp_5)$ & 372000 & 3.629464\ldots&18.36114\ldots\\
  $M_{22}$ & 443520 & 4.164445\ldots & 22.70981\ldots\\
  $J_2$ & 604800 & 4.031298\ldots & 19.07590\ldots\\
%%  $J_2$ & 604800 & 3.891094\ldots & 18.06798\ldots\\
  $W(C_7)$ & 645120 &  4.632612\ldots & 25.54504\ldots\\
  $PSp(6,\Fp_2)$ & 1451520 & 5.270439\ldots & 34.84139\ldots\\
  $W(E_7)$ & 2903040& 5.398250\ldots&36.04850\ldots\\
  $G_2(\Fp_3)$ & 4245696 & 4.511630\ldots& 24.06106\ldots\\
  $M_{23}$ & 10200960 & 4.030011\ldots & 20.98580\ldots\\
  $W(C_8)$ & 10321920 &4.928996\ldots &  28.53067\ldots\\
  $T$ &  17971200 & 4.963701\ldots & 32.54160\ldots\\
  $Sz(32)$ & 32537600 & 2.755449\ldots & 9.107751\ldots\\
  $HS$ & 44352000 & 4.484432\ldots& 25.68549\ldots\\ %%?????
%%  $HS$ & 44352000 & 4.002027\ldots & 18.66327\ldots\\
  $J_3$ & 50232960 & 4.304616\ldots & 23.42082\ldots\\
%% $J_3$ & 50232960 & 3.972161\ldots & 19.09843\ldots\\
  $W(C_9)$ & 185794560 & 4.716359\ldots & 26.41344\ldots\\
  $M_{24}$ & 244823040 & 4.967107\ldots & 29.84845\ldots\\
  $Sp(4,\Fp_7)$ & 276595200 &  3.501127\ldots & 14.83811\ldots \\
$\Omega^+(4,\Fp_{31})$ &   442828800& 3.829841\ldots& 17.60003\ldots\\
$\Omega^-(4,\Fp_{31})$ &   443751360& 3.003133\ldots& 11.02613\ldots\\
  $W(E_8)$& 696729600 & 4.194248\ldots&20.79438\ldots\\
  $McL$ & 898128000 & 4.561453\ldots& 27.45649\ldots\\
%%  $McL$ & 898128000 & 4.531381\ldots& 25.52575\ldots\\
  $Sp(4,\Fp_{9})$ &3443212800& 3.409108\ldots & 14.04475\ldots\\
  $He$ & 4030387200 & 3.488680\ldots & 14.31119\ldots\\
  $G_2(\Fp_5)$ & 5859000000 & 3.855868\ldots& 18.68766\ldots\\
  $Sp(6,\Fp_3)$& 9170703360 & 3.871692\ldots & 18.90072\ldots  \\
  $Co_3$ & 495766656000 & 4.535119\ldots & 25.99974\ldots \\
  $Co_2$ &  42305421312000 & 3.865290\ldots & 17.74829\ldots \\
  $\Omega(5,\Fp_{31})$ & 409387254681600 & 3.277801\ldots& 12.90986\ldots\\
  $Rub$ & 43252003274489856000 & 5.668645\ldots & 36.78701\ldots\\
\end{tabular}
\end{table}

\begin{figure}[p]
\centering
\caption{Empirical distribution of the waiting time for $H_{31}$}
\includegraphics[width=4in]{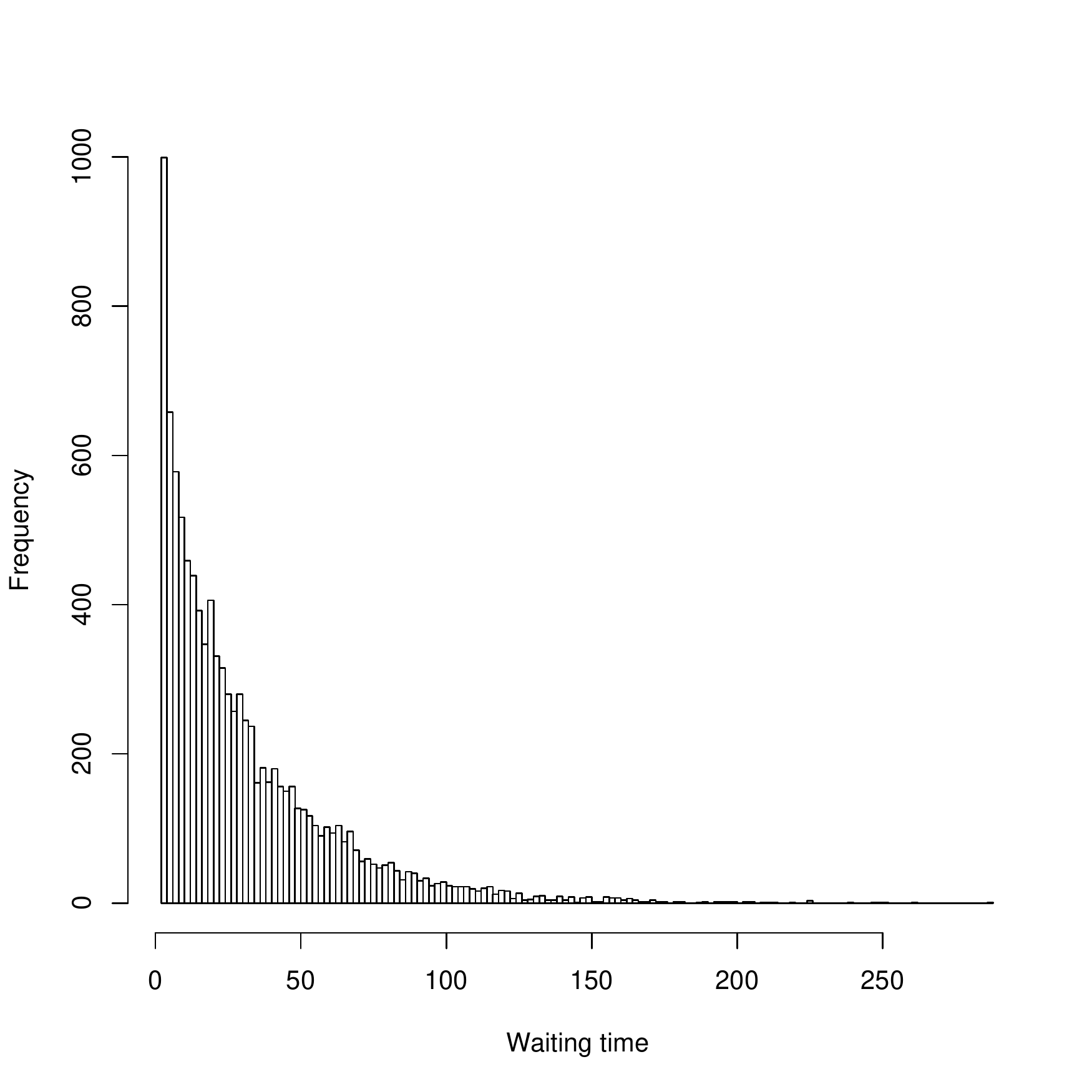}
\end{figure}

% > d31=read.table("empirical-aff31.data");
% > summary(d31);
% > plot(hist(d31$V2,nclass=100));
% > pdf(onefile=FALSE);
% > plot(hist(d31$V2,nclass=200),xlab="Waiting time",main="Empirical distribution");
% > dev.off();

\begin{figure}[p]
\centering
\caption{Distribution of the Chebotarev invariant for groups of order $720$}
\includegraphics[width=4in]{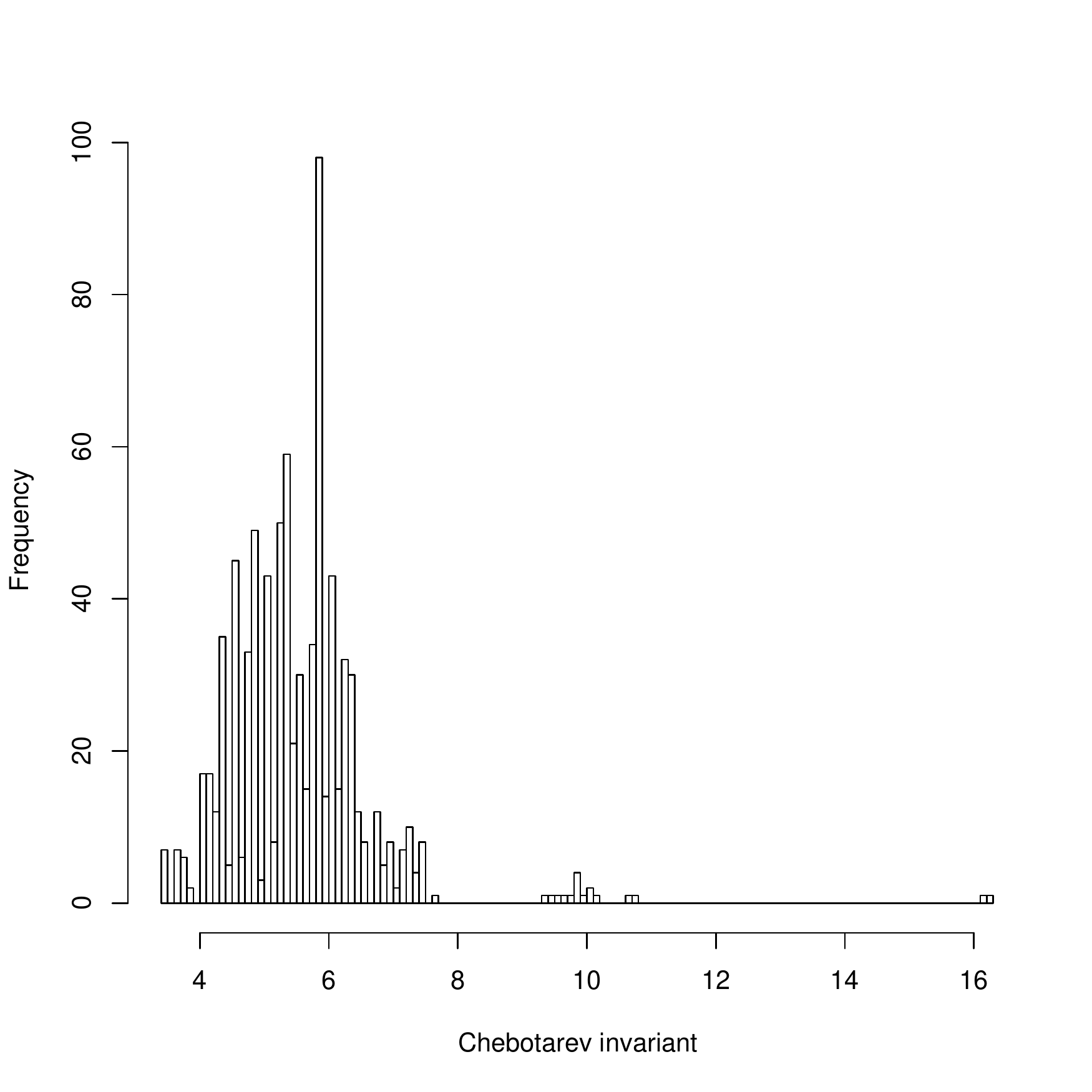}
\end{figure}

%> pdf(onefile=FALSE);
%$> plot(hist(d720$V1,nclass=100),main="Distribution of Chebotarev invariants for groups of order 720",xlab="Chebotarev invariant",col="slategray");
%> dev.off();

% \begin{table}[ht]
% \centering
% \caption{Chebotarev invariants of $A_n$}
% \begin{tabular}{c|c|c|c}

% \end{tabular}
% \end{table}

\section{Arithmetic considerations}\label{sec-arith}

In this short section, we indicate the (expected) number-theoretic
connections of our work.
\par
First, let $K$ be a Galois extension of $\Qq$ with group $G$.  For
each prime $p$ that is unramified in $K$, we have a well-defined
Frobenius conjugacy class $\frob_{p,K}\in G^{\sharp}$. For simplicity,
we denote $\frob_{p,K}=1$ when $p$ is ramified in $K$.  The
\emph{Chebotarev density theorem} says that
\begin{equation}\label{eq-cheb-density}
\lim_{y\to +\infty} \frac{|\{p \leq y : \frob_{p,K}=C\}|}{\pi(y)} =
\frac{|C|}{|G|}
\end{equation}
where $C\in G^{\sharp}$ is a fixed conjugacy class of $G$ and $\pi(y)$
is the usual prime-counting function, i.e., the number of primes
$p\leq y$.
\par
Now fix a real number $y$ large enough, so that every conjugacy class
of $G$ is of the form $\frob_{p,K}$ for some $p\leq y$.  For each
$i\geq 1$, select uniformly and independently a random prime $p$ from
the set $\{p : p\leq y\}$ and define $X_{i,y}^\sharp = \frob_{p,K}$.
We thus have a sequence of independent and identically distributed
random variables $X(y)=(X_{i,y}^{\sharp})$ in $G^{\sharp}$.  As usual,
we define the waiting time
\[
\tau_{X(y),G}=\min\{n\geq 1\,\mid\,
(X_{1,y}^{\sharp},\ldots,X_{n,y}^{\sharp})\text{ generate $G$}\}\in
[1,+\infty].
\]
\par
Using the Chebotarev density theorem, one obtains easily
$$
\lim_{y\to + \infty} \expect\big(\tau_{X(y),G}\big) = \cheb{G}.
$$
\par
Therefore, in an imprecise way, $c(G)$ can also be thought of as the
expected number of ``random'' primes $p$ needed for $\frob_{p,K}$ to
generate $G=\Gal(K/\Qq)$. Indeed, this is our motivation for using the
name ``Chebotarev invariant''.
\par
Of course in practice, one usually considers the (non-random) sequence
$\frob_{2,K}$, $\frob_{3,K}$, $\frob_{5,K}$, $\frob_{7,K},\ldots$. We
now explain, informally, what can be expected to happen in that
situation. The deterministic analogue of the Chebotarev waiting time
is given
$$
\tau(K)=\min\{k\geq 1\,\mid\,\text{the first $k$ conjugacy classes
  $\frob_{2,K}$, \ldots, $\frob_{p_k,K}$ generate $G$}\},
$$
where $p_k$ is the $k$-th prime number. 
\par
However, for a fixed $K/\Qq$, the value of $\tau(K)$ might diverge
considerably from $\cheb{G}$.  So we suppose we have some family
$\mathcal{K}$ of finite Galois extensions of $\Qq$ (or another base
field), all (or almost all) of which have Galois group
$\Gal(K/\Qq)\simeq G$, a fixed finite group, and that, for all values
of some parameter $x\geq 1$, we have finite subfamilies
$\mathcal{K}_x$ (which exhaust $\mathcal{K}$ as $x\ra +\infty$) and
some averaging process for invariants of the fields in $\mathcal{K}$,
denoted $\expect_x$ (for instance, one might take
$$
\expect_x(\alpha(K))=\frac{1}{|\mathcal{K}_n|}\sum_{K\in\mathcal{K}_x}{
\alpha(K)
}
$$
but other weights, involving multiplicities, etc, might be better
adapted). 
\par
Using this, we can define Chebotarev invariants for the family
$\mathcal{K}$ by averaging:
$$
\cheb{\mathcal{K}_x}=\expect_x(\tau(K)),\quad\quad
\scheb{\mathcal{K}_x}=\expect_x(\tau(K)^2).
$$
\par
The basic arithmetic question is then: for a given family, is it true
that $\cheb{\mathcal{K}_x}$ is, for $x$ sufficiently large at least,
close to $\cheb{G}$ (and similarly for the secondary Chebotarev
invariant)? The basic reason one can expect this is the Chebotarev
density theorem~(\ref{eq-cheb-density}).  
% , which states that, for a fixed finite Galois extension
% $K/\Qq$ with $\Gal(K/\Qq)\simeq G$, we have
% $$
% \lim_{x\ra +\infty}{\frac{1}{\pi(x)}
% |\{p\leq x\,\mid\, \frob_{p,K}=c^{\sharp}\}|}
% =\nu_G(c^{\sharp}),
% $$
% for any conjugacy class $c^{\sharp}\in G^{\sharp}$. Thus,
% $\frob_{p,K}$ is asymptotically distributed like a random conjugacy
% class distributed according to the measure $\nu_G$. This strongly
% suggests that the Chebotarev invariants should be comparable. 
We want to point out a few difficulties that definitely arise in
trying to make this precise.
\par
First of all, quantifying the Chebotarev density theorem is
\emph{hard}: it almost immediately runs into issues related to the
Generalized Riemann Hypothesis; even in the seemingly trivial case
where $G=\Zz/2\Zz$ (quadratic extensions of $\Qq$), the basic question
of estimating the size of the smallest prime $p$ for which the
corresponding Frobenius is non-trivial, i.e., the smallest quadratic
non-residue modulo $p$, in terms of the discriminant of the field, is
unsolved (see, e.g.,~\cite[Prop. 5.22, Th. 7.16]{ant} for conditional
and uncontional results in that case). This is a problem because if we
sum with uniform weight, a single ``bad'' field $K_0$ can destroy any
chance of approaching the Chebotarev invariant. Indeed: note that in
that case
\begin{equation}\label{eq-bad}
\expect_x(\tau(K))\geq \frac{1}{|\mathcal{K}_x|}
k_{min}(K_0)
\end{equation}
where
$$
k_{min}(K)=\min\{k\geq 1\,\mid\, \frob_{p,K}\not=1\}
$$
is the index of the first non-trivial Frobenius conjugacy
class. In the current state of knowledge, it can be that there exists
$K_0$ with
$$
k_{min}(K_0)> \disc(K_0)^{A}
$$
for some constant $A>0$ (see~\cite{lmo}); on the other hand, if the
family $\mathcal{K}$ is defined as that of splitting fields of
monic polynomials of degree $n$, and the subfamily $\mathcal{K}_x$ is
that of polynomials of height $\leq x$, then we know that most
$K\in\mathcal{K}$ have Galois group $\mathfrak{S}_n$, that
$|\mathcal{K}_x|=(2x+1)^n$ if $x$ is an integer, and the discriminant
is obviously often also \emph{at least} a power of
$x$. Thus~(\ref{eq-bad}) might already be bad enough to preclude any
comparison. On the other hand, on the Riemann Hypothesis, we have
$$
k_{min}(K)\ll (\log \disc(K))^2,
$$
(where the implied constant depends on $G$), and the problem would
then be alleviated.
\par
Another issue is that one can not expect, as stated, to have
$$
\lim_{x\ra +\infty}{\cheb{\mathcal{K}_x}}=\cheb{G}
$$
for interesting families for the simple reason that the statistic of
small primes is typically \emph{not} the uniform one, i.e., if we fix
a prime $p$, we can not expect to have
$$
\lim_{x\ra
  +\infty}{\expect_x(\charfun_{\{\frob_{p,K}=c^{\sharp}\}})}=\nu_G(c^{\sharp}),
$$
even if we assume that all the fields involved are unramified at $p$.
\par
For instance, consider $\mathcal{K}$ the set of cubic polynomials
$$
X^3+a_2X^2+a_1X+a_0,
$$
with $a_i\in\Zz$, with $\mathcal{K}_x$ those where $|a_i|\leq x$ for
$i=0$, $1$, $2$, and count them uniformly. Take $p=5$ and consider
only polynomials with no repeated root modulo $5$ and splitting field
of degree $6$; then, asymptotically, the conjugacy Frobenius at $5$
will be distributed in $\mathfrak{S}_3=G$ as dictated by the
factorization of the polynomial modulo $5$. One finds easily that
there are $100$ monic cubic polynomials in $\Fp_5[X]$ with non-zero
discriminant (there are $25$ with repeated roots), among which:
\begin{itemize}
\item $10$  split in linear factors, i.e., a density $1/10$;
\item $40$ are irreducible, i.e., a density $4/10$;
\item $50$ split as a product of one linear factor and one irreducible
  quadratic factor, i.e., a density $1/2$.
\end{itemize}
\par
This is in sharp constrast with the density of the three corresponding
conjugacy classes in $\mathfrak{S}_3$, which are respectively:
\begin{itemize}
\item $1/6$ for the identity class;
\item $1/2$ for the $3$-cycles;
\item $1/3$ for the transposition.
\end{itemize}
\par
In particular, not even the relative frequencies are preserved! On the
other hand, it is well-known that if $p$ is increasing, the
discrepancy between the distribution of the factorization patterns of
squarefree polynomials modulo $p$ and the density of conjugacy classes
disappears: we have
$$
\frac{1}{p^n}|\{ f\in\Fp_p[X]\,\mid\, \text{$f$ squarefree of degree
  $n$ with $\frob_f=c^{\sharp}$} \}| \sim \nu_{G}(c^{\sharp})
$$
uniformly for all conjugacy classes $c^{\sharp}\in G=\mathfrak{S}_n$.
\par
This suggests that it is likely that one can prove some relevant
results: one would consider some increasing starting point $s(x)\geq
2$ and a modified waiting time
$$
\tau_x(K)=\min\{k\,\mid\, \text{the first $k$ conjugacy classes
  $\frob_{p,K}$ with $p\geq s(x)$ generate $G$}\}
$$
and hope to prove (possibly under the Generalized Riemann Hypothesis,
possibly unconditionally after throwing away a few ``bad'' fields)
that
$$
\lim_{x\ra +\infty}{\expect_x(\tau_x(K))}=\cheb{G},
$$
for suitable $s(x)$. One may guess that for polynomials of height
$\leq x$ and fixed degree $n$ (and $G=\mathfrak{S}_n$), this would be
true with $s(x)\asymp \log x$.
\par
% %%%%%%%%%%%%%%
% \begin{remark}
% Another explanation of the link between the arithmetic situation and
% the probabilistic model is as follows. Fix a $K/\Qq$ with Galois group
% $G$. Then one might consider a suitable $y=y(x)$, and among the primes
% $p\leq y$, select successive ones randomly, uniformly and
% independently (possibly ensuring that the successive choices are
% distinct, i.e., selecting a random permutation of primes $\leq
% y$). Taking Frobenius conjugacy classes, this leads to a sequence of
% random variables $X(y)=(X_{i,y}^{\sharp})$ in $G^{\sharp}$, hence to a
% waiting time $\tau_{X(y),G}$ as in~(\ref{eq-wait-see}). One can then
% see that, by the Chebotarev density theorem, we have
% $$
% \lim_{y\to + \infty} \expect\big(\tau_{X(y),G}\big) = \cheb{G}.
% $$
% \end{remark}

% Choosing the primes in order, leads to the problems in this section (which seem to be quite difficult!).  A cheap/boring version is that $|\mathcal{K}_x|^{-1 }\sum_{K \in \mathcal{K}_x} \expect\big(\tau_{X(y),G}(K)\big)$ converges to $c(G)$ where $y$ is an appropriate function of $x$.

%%%%%%%%%%%%%%%%%%%%%

\section{Remarks and problems}

We finish with a few more remarks and problems.
\par
\begin{itemize}
\item (What does the invariant ``know''?) As a bare numerical
  invariant of a finite group, the Chebotarev invariant seems to be
  fairly subtle. For instance, we see from Section~\ref{sec-ex1} that
  it ``knows'' that vector spaces over finite fields are in some sense
  very similar for varying base field, but that they become also
  ``simpler'' as the cardinality of the base field grows. It also
  seems to know that non-reductive finite matrix groups are
  worse-behaved than reductive ones (as shown by the results for
  $H_q$). What else does the invariant reveal?
\item (A method for upper bounds) There are, in the literature, quite
  a few results about a finite group $G$ of the type: ``if a subgroup
  $H$ contains elements in some set $C_1$, some other set $C_2$, ...,
  some other set $C_m$, of conjugacy classes, then $H$ is in fact
  equal to $G$''. For instance, a lemma of Baer quoted by
  Gallagher~\cite[Lemma, p. 98]{gallagher} says that there is no
  proper subgroup $H$ of $\sy_n$ which (1) contains an $n$-cycle, (2)
  contains a product of a transposition and cycles of odd lengths, (3)
  contains an element of order divisible by a \emph{prime}
  $p>n/2$. Another such result is the Lemma~\ref{lm-serre} of Serre
  for $SL(2,\Fp_p)$, and we also mention~\cite[Lemma 3.2]{jkz} for
  another example with the Weyl group $W(E_8)$, and there are many
  other such results used, e.g., for proving concrete cases of
  Hilbert's Irreducibility Theorem.
\par
With this notation, and assuming we work with a sequence of
independent and uniformly distributed $G$-valued random variables
$(X_n)$, this means that we have
$$
\tau_{G}\leq \tau_{C_1,\ldots,C_m}=\max(\tau_{C_j},\ 1\leq j\leq m),
$$
where
$$
\tau_{C_j} =\min\{n\geq 1\,\mid\, X_n^{\sharp}\in C_j\}.
$$
\par
From Proposition~\ref{pr-collect}, we obtain easily an upper bound
\begin{equation}\label{eq-dist}
\cheb{G}\leq \expect(\tau_{C_1,\ldots,C_m})=
\sum_{\emptyset\not=I\subset \{1,\ldots, m\}}{\frac{(-1)^{|I|+1}}{
    \nu\Bigl(\bigcup_{j\in I}{C_j}\Bigr) }},
\end{equation}
and one may hope to approximate $\cheb{G}$ by choosing wisely the sets
$(C_j)$. 
\par
However, it is not clear at all to what extent this can approach the
truth. Here are some examples:
\par
(1) Baer's lemma gives only an upper bound
$$
\cheb{\sy_n}\ll n
$$
as $n\ra +\infty$, which is quite weak compared with
Theorem~\ref{th-cheb-symmetric} (it is dominated by the time required
to obtain an $n$-cycle). How far is this from the best possible result
that can be obtained in this way, and how far is the latter from
Theorem~\ref{th-cheb-symmetric}?
\par
(2) Consider $G=\Fp_p^2$ with $p$ odd. It is possible to take
\begin{align*}
C_1&=\{(x,y)\in \Fp_p^2-\{0\}\,\mid\, y\not=0\text{ and $xy^{-1}$ is a
  square in $\Fp_p$}\},
\\
C_2&=\Fp_p^2-\{0\}-C_1.
\end{align*}
\par
The point is that whenever $(v,w)\in C_1\times C_2$, $w$ and $v$ are
not on the same line through the origin, so $(v,w)$ generate
$\Fp_p^2$. Since 
$$
|C_1|=|C_2|=(p^2-1)/2,\ |C_1\cup C_2|=p^2-1,
$$ 
this leads to
$$
\cheb{\Fp_p^2}\leq \frac{1}{\nu(C_1)}+\frac{1}{\nu(C_2)}-
\frac{1}{\nu(C_1\cup C_2)}=\frac{3p^2}{p^2-1},
$$
which asymptotically requires one more step on average than the right
Chebotarev invariant (given by~(\ref{eq-cheb-vs})), namely
$\cheb{\Fp_p^2}=(2p^2+p)/(p^2-1)$. It seems also that this type of
sets is essentially best possible for applying this upper bound in
this case. 
% And if $G$ is $\Fp_p^k$ with $k\geq 3$, it is not even
% clear how to generalize this construction; the question is roughly the
% following: describe three subsets $C_1$, $C_2$, $C_3$ of $\Fp_p^3$
% such that $(u,v,w)\in C_1\times C_2\times C_3$ is \emph{always} a
% basis of $\Fp_p^3$, and such that
% $$
% \frac{1}{\nu(C_1)}+\frac{1}{\nu(C_2)}+\frac{1}{\nu(C_3)}-
% \frac{1}{\nu(C_1\cup C_2)}-\frac{1}{\nu(C_1\cup C_3)}
% -\frac{1}{\nu(C_2\cup C_3)}
% +\frac{1}{\nu(C_1\cup C_2\cup C_3)}
% $$
% is as small as possible. (One could try with more than three sets, but
% this the simplest case). 
\par
(3) Consider $G=W(E_8)$, the Weyl group of $E_8$. There is a
non-trivial homomorphism
$$
\eps\,:\, W(E_8)\ra \{\pm 1\},
$$
and in~\cite[Lemma 3.2]{jkz}, jointly with F. Jouve, we proved that
one could take $C_1=\ker(\eps)$, $C_2$ the union of the conjugacy
classes of $w$ and $w^2$, where $w\in W(E_8)$ is a Coxeter element;
the density of $C_2$ is $1/15$ and we then get the upper-bound
$$
2+15-30/17=25.23\ldots
$$
instead of the correct value $4.194248\ldots$.
\par
(4) For $SL(2,\Fp_p)$, Theorem~\ref{th-psl} shows that the sets $C_1$,
$C_2$, $C_3$ given by Lemma~\ref{lm-serre} give an asymptotically
optimal answer (and this is an essential ingredient in the proof).
\par
Despite this relative inefficiency, it is interesting to notice that
in applications of sieve methods to probabilistic Galois theory (as
was the case in~\cite{gallagher}) and~\cite{jkz},\footnote{\ If only
  implicitly in the latter.} it is this type of distinguishing
families which can be used in estimating how rare ``small'' Galois
groups are in certain families, and in fact it is the quantity
\begin{equation}\label{eq-naive}
\sum_{i=1}^m{\frac{1}{\nu(C_i)}}
\end{equation}
which occurs naturally as coefficient in a ``saving factor'' of the
large sieve; see, e.g,~\cite[p. 57]{large-sieve}, where the question
of minimizing this by varying the sets was raised explicitly for
symmetric groups.
\item (General upper bounds?) A first problem is to bound
$\cheb{G}$ from above, in a meaningful
way. Since we have
$$
\tau_{G}\leq \sum_{\Hc\in\max(G)}{\hat{\tau}_{\Hc}}.
$$
we obtain
$$
\cheb{G}\leq \sum_{\Hc\in\max(G)}{\frac{1}{1-\nu(\Hc^{\sharp})}},
$$
from~(\ref{eq-comput}). Together with~(\ref{eq-maximal}), this gives
an upper bound
\begin{equation}\label{eq-trivial}
\cheb{G}\leq |G|\sum_{\Hc\in\max(G)}{\frac{1}{|\Hc|}}
\end{equation}
which is close to being sharp for the groups $H_q$ of
Section~\ref{sec-solvable}: indeed, if $q$ is odd, then
Lemma~\ref{lm-max-hq} gives
\begin{align*}
|H_q|\sum_{\Hc\in\max(H_q)}{\frac{1}{|\Hc|}}&=q(q-1)\Bigl(\frac{1}{q-1}
+\sum_{\ell\mid q-1}{\frac{\ell}{q(q-1)}}\Bigr)\\
&=
q+\sum_{\ell\mid q-1}{\ell}=q+2+\sum_{2<\ell\mid q-1}{\ell}
\end{align*}
(where $\ell$ runs over prime divisors of $q-1$). If $q=2\ell+1$
($\ell$ odd prime) is a Sophie Germain prime, this gives
$$
q+2+\sum_{2<\ell\mid q-1}{\ell}=q+2+\frac{q-1}{2}=\frac{3(q+1)}{2},
$$
which is off, asymptotically, only by a factor $3/2$ from the value
$$
\cheb{H_q}=q+O(q^{-1})
$$
that follows from~(\ref{eq-caxpb}). Of course, it is not known that
there are infinitely many Sophie Germain primes, but for
$q=2\ell_1\ell_2+1$, with $\ell_i$ prime, we have
\begin{align*}
|H_q|\sum_{\Hc\in\max(H_q)}{\frac{1}{|\Hc|}}&=
\begin{cases}
q+\ell_1+\ell_2+2&\text{ if } \ell_1\not=\ell_2\\
q+\ell_1+2&\text{ if } \ell_1=\ell_2
\end{cases}
\\
&\leq 2q.
\end{align*}
\par
By sieve methods, it is known that there are infinitely many primes
$q$ for which either $q$ is a Sophie Germain prime, or is
$2\ell_1\ell_2+1$, and hence one sees that the ``trivial''
estimate~(\ref{eq-trivial}) above can not be improved by more than a
constant in full generality.  On the other hand, it is very far off
for many groups: for a random example, it gives
$$
4.7820\ldots=\cheb{A_7}\leq 93.
$$
\par
It would be more interesting to have a decent upper bound in terms of
the order of $G$ only. Here, using the set of all conjugacy classes
in~(\ref{eq-dist}), we get as an upper bound from the contribution of
singletons that
$$
\cheb{G}\leq \sum_{g^{\sharp}\in
  G^{\sharp}}{\frac{1}{\nu(g^{\sharp})}} 
=\sum_{g^{\sharp}\in
  G^{\sharp}}{|N_G(g)|},
$$
(where $N_G(g)$ is the normalizer of $g$ in $G$). This gives trivially
$$
\cheb{G}\leq |G|^2,
$$
but this seems unlikely to be close to the truth (for $G\not=1$).  For
instance, since
$$
\cheb{H_q}=q\sim \sqrt{|H_q|},
$$
one may wonder if $H_q$ is also (essentially) extremal in
this sense, i.e., one may ask whether an estimate
$$
\cheb{G}\ll \sqrt{|G|}
$$
holds for all $G$. (Certainly for $|G|=q(q-1)$ with $q\leq 43$ prime,
it is experimentally true that $H_q$ maximizes the Chebotarev
invariant). 
\item (Other classes of groups?) There are many classes of groups for
  which it should be possible to understand the behavior of the
  Chebotarev invariant, at least asymptotically. For instance, one can
  consider non-reductive subgroups of finite matrix groups, e.g., the
  standard Borel subgroup (upper triangular matrices) of
  $GL(n,\Fp_q)$. In fact, solvable groups seem particularly
  interesting.
% Numerical experiments are not so easy: for the Borel
  % subgroup of $GL(3,\Fp_p)$, $p$ prime $\leq 100$, the number of
  % conjugacy classes of maximal subgroups is $\geq 40$ for all $p\notin
  % \{2,3,5,7, 13,17, 19, 37, 73, 97\}$, and $\geq 22$ for
  % $p\notin\{2,3,5, 17\}$.
\end{itemize}

%% B(GL(3,F_7): <10.7503468627148659175813172619, 
%% 162.274111745083725084668821701>
%% > Chebotarev(BorelSb3(17)); 
%% <25.3492667247289623434087679154, 978.462928081718961380067678055>

% \par
% -- \underline{Refining the bounds for $A_n$, $\sy_n$, and
%   groups of Lie type}. It would very interesting to confirm Dixon's
% conjecture that $\cheb{\sy_n}$ is bounded (and then
% presumably also $\scheb{\sy_n}$, and similarly for
% $A_n$). The case of groups $L(\Fp_q)$ for a fixed Lie type $L$ seem to
% be (maybe) already accessible to group theorists; that of an infinite
% family of increasing rank might be behaving similarly to
% $\sy_n$.

\section*{Appendix: Magma script}

% {\color{red} I have added the option ``[columns=flexible]'' to the {\tt lstlisting} below.  It gives a better looking spacing (in my opinion), and it also copies well using my Mac  (before, if I tried to copy code from the PDF I would get extra spaces).
% }

The following script can be used to compute the Chebotarev invariant
(and the secondary invariant) using \textsc{Magma}, by applying the
formulas~(\ref{eq-comput}) and~(\ref{eq-secondary}). The output is
given as real approximations since usually the denominators are
unwieldy. Also note that because of the use of the construct
\texttt{Subsets(M)}, this script only applies to groups with at most
$29$ conjugacy classes of maximal subgroups;\footnote{\ For
  alternating groups $A_n$, this means $n\leq 47$, or $n\in
  \{49,51,53\}$.} to -- try to -- compute further, one would have to
replace the loop over subsets obtained in this manner with a
hand-rolled one.
\par
A similar \textsc{GAP} script is available upon request, as well as a
\textsc{Sage} version, which basically calls the \textsc{GAP} group
theory routines. However, these versions are much slower.
\par
The last routine in the script is useful for ``empirical'' study of
the probabilistic model.
\par
\lstset{basicstyle=\footnotesize\ttfamily}
\begin{lstlisting}[columns=flexible]
// The following calculates J such that 
// J[k][i]=true if and only if the k-th maximal subgroup 
// of G intersects the i-th conjugacy class of G

MCIntersectionMatrix:=function(G,C,f,M)
  J := [ [false : i in [1..#C]] : k in [1..#M] ];
  for k in [1..#M] do
    H := M[k]`subgroup;
    CH := ConjugacyClasses(H);
    for j in [1..#CH] do
      J[k][f(CH[j][3])] := true;
    end for;
  end for;
  return J;
end function;

// This returns [c,s] where c is the Chebotarev invariant of G
// and s the secondary invariant.

Chebotarev:= function (G)
  if IsTrivial(G) then
    return <1.0,1.0>;
  end if;

  C := ConjugacyClasses(G);
  f := ClassMap(G);
  M := MaximalSubgroups(G);
  J := MCIntersectionMatrix(G,C,f,M);
  c:=0.0; s:=0.0;

  for I in Subsets({1..#M}) do
    if #I ne 0 then 
      v:=0;
      for i in [1..#C] do
        if forall(t) {k: k in I | J[k][i]} then
	  v:= v + C[i][2]/#G;
	end if;
      end for;
      c := c + (-1)^(#I+1)/(1-v);
      s := s+ (-1)^(#I)/(1-v)*(1-2/(1-v));
    end if;
  end for;
  return([c,s]);

end function;

// Compute empirical Chebotarev invariant.
// The optional parameter steps is the number
// of iterations to do. Example:
// > EmpiricalChebotarev(Alt(7):steps:=10000);

EmpiricalChebotarev:=function(G : steps:=1)
  total:=0;
  C := ConjugacyClasses(G);
  f:=ClassMap(G);
  M := MaximalSubgroups(G);
  J := MCIntersectionMatrix(G,C,f,M);
  for count in [1..steps] do
    nb:=0;
    vprint User1: "Iteration, ", count;
    // Start with all subgroups
    possible:=[ 1..#M ];
    while possible ne []  do 
      g:=Random(G);
      nb := nb+1;
      index:=f(g);
      // Only those subgroups containing the class of g remain
      possible:=[ k : k in possible | J[k][index]  ];
    end while;
    total:=total+nb;
  end for;
  return total/steps, total/steps*1.0;
end function;
\end{lstlisting}

\end{document}